\title[2D magneto-micropolar equations]{Initial-boundary value problem for 2D magneto-micropolar equations with zero angular viscosity}
\author{Shasha Wang, Wen-Qing Xu, Jitao Liu$^*$}
\address[Shasha Wang]{College of Applied Sciences, Beijing University of Technology, Beijing, 100124, P. R. China.}
\email{wshasha@emails.bjut.edu.cn}
\address[Wen-Qing Xu] {
Department of Mathematics and Statistics,
  California State University, Long Beach, CA 90840, USA.}
\email{wxu@csulb.edu}
\address[Jitao Liu]{College of Applied Sciences, Beijing University of Technology, Beijing, 100124, P. R. China.}
\email{jtliu@bjut.edu.cn,\,jtliumath@qq.com}
\keywords{Initial-boundary value problem, 2D magneto-micropolar equations, zero angular viscosity}
\thanks{{\em 2010 Mathematics Subject Classification.} 35Q35, 76D03.}
\thanks{$^*$Corresponding author}
\theoremstyle{plain}
\newtheorem{corollary}{Corollary}[section]
\newtheorem{theorem}{Theorem}[section]
\newtheorem{lemma}{Lemma}[section]
\newtheorem{proposition}{Proposition}[section]
\theoremstyle{definition}
\newtheorem{definition}{Definition}[section]
\let\f=\frac
\let\p=\partial
\def\R{\mathbb R}
\newcommand{\beq}{\begin{equation}}
\newcommand{\eeq}{\end{equation}}
\newcommand{\ben}{\begin{eqnarray}}
\newcommand{\een}{\end{eqnarray}}
\newcommand{\beno}{\begin{eqnarray*}}
\newcommand{\eeno}{\end{eqnarray*}}
\begin{document}

\begin{abstract}
In this paper, we are concerned with the initial-boundary value problem to the 2D magneto-micropolar system with zero angular viscosity in a smooth bounded domain. We prove that there exists a unique global strong solution of such a system by imposing natural boundary conditions and regularity assumptions on the initial data, without any compatibility condition.
\end{abstract}
\maketitle

\section{Introduction}
\label{intro}
\setcounter{equation}{0}

This paper is concerned with the initial-boundary value problem for the incompressible magneto-micropolar equations, which were first studied in 1974 by Ahmadi and Shahinpoor in \cite{AS}.
The standard 3D magneto-micropolar equations can be written as
\begin{equation}\label{3D}
\left\{\begin{array}{ll}
\p_t {\mathbf u} + ({\mathbf u}\cdot\nabla) {\mathbf u} + \nabla(\pi + \frac{1}{2}|{\mathbf b}|^2) = (\mu + \chi) \Delta {\mathbf u} + ({\mathbf b} \cdot \nabla) {\mathbf b} + \chi (\nabla \times {\mathbf w}),\\
\p_t{\mathbf w} + ({\mathbf u}\cdot\nabla) {\mathbf w} + 2\chi {\mathbf w} =
 \gamma \Delta {\mathbf w} + (\alpha + \beta)\nabla {\rm div}{\mathbf w} + \chi (\nabla\times {\mathbf u}),\\
\p_t{\mathbf b} + ({\mathbf u}\cdot\nabla) {\mathbf b} = \nu \Delta {\mathbf b} + ({\mathbf b}\cdot\nabla) {\mathbf u},\\
\nabla \cdot {\mathbf u}=0, \quad \nabla \cdot {\mathbf b} = 0,
\end{array}\right.
\end{equation}
where
$ {\mathbf u} = {\mathbf u}(x,t) $, $ {\mathbf w}={\mathbf w}(x,t) $, $ {\mathbf b} = {\mathbf b}(x,t) $ and $ \pi = \pi(x,t) $ denote the fluid velocity, the micro-rotation velocity (angular velocity of the rotation of the fluid particles),
the magnetic field and the pressure respectively. The constant $ \mu $ denotes the kinematic viscosity, $ \chi $ the vortex viscosity, $ {\nu}$ the magnetic diffusivity, and $ \alpha $, $ \beta $, $ \gamma $ the angular viscosities, all of which are assumed to be positive.

The magneto-micropolar system is closely related to many classical systems. When the micro-rotation and magnetic effects are neglected, namely $ {\mathbf w} = {\mathbf b} = 0 $, the system (\ref{3D}) reduces to the incompressible Navier-Stokes equations.
The incompressible Navier-Stokes equations, which govern the motion of the incompressible fluid in a domain, are widely used in engineering and studied in mathematics.
When $ \chi = 0 $ and $ \mathbf w = 0 $, the system (\ref{3D}) reduces to the incompressible magnetohydrodynamics (MHD) system that describes the motion of electrically conducting fluids. It is widely used in astrophysics, geophysics, plasma physics and other applied sciences. Owing to its physical applications and mathematical significance, a lot of mathematicians have been dedicated to the mathematical study of the MHD system and important progress has been made in the past decades
(see, e.g., \cite{CWY, CMZ, DuLions, HX, HX2, JLiu, KL, Lei2, LZ, PZZ, RWXZ, SeTe, WWX, Zhang}).
Finally, when $ {\mathbf b} = 0 $, the system (\ref{3D}) reduces to the micropolar fluids, which are proposed by Eringen \cite{Er}.
The micropolar fluids are fluids with microstructure, e.g. anisotropic fluids, such as liquid crystals made up of dumbbell molecules and animal blood. In particular, the global well-posedness of the micropolar fluids has attracted extensive attention (see, e.g., \cite{DC, DLW2, DWX, DZ, GaRi, JLWY, Li, LW2018, SP, YN}).

The magneto-micropolar system can model the motion of incompressible and electrically conductive micropolar fluids with rigid microinclusions in a magnetic field.
The interaction between the flow field and the magnetic field is manifested through the body force and body couple.
Specifically, the magneto-micropolar system has constant density and electrical conductivity, while the relativistic, Hall and temperature effects are ignored.
In addition, such a system finds application in magnetohydrodynamics (MHD) generators with neutral fluid seedings and neutral fissionable bacteria in the form of rigid microinclusions (see \cite{AS}). Because of its physical applications, the study on this system has attracted much more attention.

Starting from \cite{AS}, where the Serrin-type criteria for the magneto-micropolar equations were established, this system has been studied extensively (see, e.g., \cite{OTRM, RoMe, TWZ, Yuan}). In \cite{RoMe}, the local in time existence and uniqueness of strong solutions were obtained by the spectral Galerkin method in 2D and 3D spaces. The global existence of strong solutions with small initial data was obtained in 3D space \cite{OTRM}. The authors in \cite{RmBo} proved the existence of weak solutions by the Galerkin method in 2D and 3D spaces, and in the 2D case, proved the uniqueness of weak solutions. Recently, more attention is focused on the 2D magneto-micropolar equations. As a matter of fact, for $ {\bf x} = (x_1, x_2) \in \mathbb{R}^2 $, by setting
\begin{equation*}
  {\mathbf u}=(u_1, u_2, 0),\quad
  {\mathbf w} = (0, 0, w),\quad
  {\mathbf b}=(b_1, b_2, 0),\quad
   \pi = \pi({\bf x}, t),
\end{equation*}
the 3D magneto-micropolar equations reduce to the 2D magneto-micropolar equations,
\begin{equation}\label{2D}
\left\{\begin{array}{ll}
{\mathbf u}_t + ({\mathbf u} \cdot \nabla) {\mathbf u} + \nabla p
= (\mu + \chi) \Delta {\mathbf u}
  +({\mathbf b} \cdot \nabla) {\mathbf b} - \chi \nabla^{\perp} w,\\
{w}_t + ({\mathbf u}\cdot\nabla) w + 2\chi w = \gamma \Delta w + \chi \nabla^{\perp}\cdot {\mathbf u},\\
{\mathbf b}_t + ({\mathbf u} \cdot \nabla) {\mathbf b}= \nu \Delta{\mathbf b} + ({\mathbf b} \cdot \nabla) {\mathbf u},\\
\nabla \cdot {\mathbf u} = 0,\quad \nabla \cdot {\mathbf b} = 0.
\end{array}\right.
\end{equation}
Here $ p= \pi + \frac{1}{2}|{\mathbf b}|^2 $, $ {\mathbf u} = (u_1({\bf x},t), u_2({\bf x},t)) $ and $ {\mathbf b} = (b_1({\bf x},t), b_2({\bf x},t)) $ are 2D vector fields with the corresponding scalar vorticities given by $ \Phi \equiv {\nabla}^{\perp}\cdot {\mathbf u} = \p_1{u_2} - \p_2{u_1} $  and $ \Psi \equiv {\nabla}^{\perp}\cdot {\mathbf b} = \p_1{b_2} - \p_2{b_1} $, while $ w $ represents a scalar function with
$ {\nabla^{\perp}}w = (-\p_2 w, \p_1 w) $.

We remark that the global regularity to the inviscid magneto-micropolar equations is still {\it a challenging open} problem.
Therefore, it is natural to study the intermediate cases such as partial viscosities. It's worth noting that more recent attention is focused on the 2D partial viscosity cases (e.g., \cite{ GuoSh, YK2015}). Recently, for the case in $ \mathbb{R}^2 $
where $ \mu, \chi, \nu > 0 $ and $ \gamma = 0 $, i.e.,
\begin{equation}\label{eq1}
\left\{\begin{array}{ll}
{\mathbf u}_t + ({\mathbf u} \cdot \nabla) {\mathbf u} + \nabla p
= (\mu + \chi) \Delta {\mathbf u}
  +({\mathbf b} \cdot \nabla) {\mathbf b} - \chi \nabla^{\perp} w,\\
{w}_t + ({\mathbf u}\cdot\nabla) w + 2\chi w= \chi \nabla^{\perp}\cdot {\mathbf u},\\
{\mathbf b}_t + ({\mathbf u} \cdot \nabla) {\mathbf b}= \nu \Delta{\mathbf b} + ({\mathbf b} \cdot \nabla) {\mathbf u},\\
\nabla \cdot {\mathbf u} = 0,\quad \nabla \cdot {\mathbf b} = 0,
\end{array}\right.
\end{equation}
the author \cite{YK2015} proved the global regularity for the Cauchy problem of the system \eqref{eq1}. The main obstacle comes from the micro-rotational term $ - \chi \nabla^{\perp}w $ in the equation $ (\ref{eq1})_1 $, which brings great difficulties in obtaining any high order estimates. To overcome this difficulty, the author in \cite{YK2015} considered the combined quantity
\begin{equation*}
  Z = \Phi - \f{\chi}{\mu + \chi} w,
\end{equation*}
which satisfies the transport-diffusion equation
\begin{equation*}\label{Z}
\p_t Z - (\mu + \chi) \Delta Z + {\mathbf u} \cdot \nabla Z + \f{\chi^2}{\mu + \chi} Z - \left(\f{2\chi^2}{\mu + \chi}- \f{\chi^3}{(\mu + \chi)^2}\right)w - \mathbf b \cdot \nabla \Psi = 0.
\end{equation*}
Thanks to this equation, one can first derive a bound on
$ \| Z \|_{L^1_t L^\infty_x} $ through establishing a suitable bound on
$ \Psi = \nabla^\bot \cdot \mathbf b $.
Based on this fact, the author further obtains the estimates of
$ \| w \|_{L^\infty_t L^\infty_x} $ and
$ \| \Phi \|_{L^1_t L^\infty_x} $,
which help to establish the global regularity of strong solutions.

However, for the system \eqref{eq1}, the corresponding initial-boundary value problem is still open. In fact, in many real world applications, flows are often restricted to bounded domains with appropriate conditions on the boundary, and these applications naturally lead to the studies of the initial-boundary value problems. In addition, solutions of the initial-boundary value problems may exhibit much richer phenomena than that of the whole space.

In this paper, we will consider the initial-boundary value problem of the system \eqref{eq1} with Dirichlet boundary conditions
\begin{equation}\label{eq2}
  {\mathbf u}|_{\p \Omega} = 0, \quad
  {\mathbf b}|_{\p \Omega} = 0,
\end{equation}
and initial conditions
\begin{equation}\label{eq20}
({\mathbf u}, w, {\mathbf b})(x,0)
 = ({\mathbf u}_0, w_0, {\mathbf b}_0)(x),
 \quad\,\hbox{in}\,\,\Omega, \\
\end{equation}
where $\Omega\subset \R^2$ represents a bounded domain with smooth boundary. Our goal is, without any compatibility condition, to establish the global existence and uniqueness of strong solutions to the system \eqref{eq1}--\eqref{eq20}.
Our main results are stated as follows.

\begin{theorem}\label{T1}
Let $ \Omega \subset \R^2 $ be a bounded domain with smooth boundary.
Suppose that the initial data $ ({\mathbf u}_0, w_0, {\mathbf b}_0) $ satisfies
\begin{equation*}
{\mathbf u}_0 \in H_0^1(\Omega) \cap H^{2}(\Omega), \quad
 w_0\in W^{1,4}(\Omega), \quad
{\mathbf b}_0 \in H_0^1(\Omega),
\end{equation*}
then there exists a unique strong solution
$ ({\mathbf u}, w, {\mathbf b}) $ of the system \eqref{eq1}--\eqref{eq20} globally in time,
such that
\begin{eqnarray*}
& & {\mathbf u} \in L^\infty(0, T; H_0^1(\Omega))
     \cap L^2(0, T; W^{2,4}(\Omega)),\quad \sqrt{t}\, {\mathbf u} \in L^\infty(0, T; H^2(\Omega)),\\
& & w \in L^\infty(0, T; W^{1,4}(\Omega)),\\
& & {\mathbf b} \in L^\infty(0, T; H_0^1(\Omega))
     \cap L^2(0, T; H^2(\Omega)),\quad \sqrt{t}\, {\mathbf b} \in L^\infty(0, T; H^2(\Omega)),
  \label{regclass}
\end{eqnarray*}
hold for any $ T > 0 $.
\end{theorem}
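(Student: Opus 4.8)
The strategy is the standard one: construct approximate solutions, derive global-in-time \emph{a priori} bounds in the regularity class of the statement, pass to the limit, and prove uniqueness separately. For the construction one may add an artificial angular viscosity $\varepsilon\Delta w$ to $(\ref{eq1})_2$, so that the regularized system is a fully parabolic magneto-micropolar system with a local strong solution, and then let $\varepsilon\to0$ after obtaining $\varepsilon$-uniform estimates (a Galerkin scheme works equally well). Hence the core of the proof is the chain of \emph{a priori} estimates below, in which I use freely the 2D embeddings $H^1\hookrightarrow L^p$ ($p<\infty$), $H^2\hookrightarrow L^\infty\cap W^{1,p}$, $W^{2,4}\hookrightarrow W^{1,\infty}$, the Ladyzhenskaya and Gagliardo--Nirenberg inequalities, and $L^q$-maximal regularity for the non-stationary Stokes operator (for which $\mathbf u_0\in H^2\cap H^1_0$ is an admissible initial value for $L^2$--$W^{2,4}$ regularity). \textbf{(1) Basic energy.} Pairing $(\ref{eq1})_1,(\ref{eq1})_2,(\ref{eq1})_3$ with $\mathbf u,w,\mathbf b$ and summing, the magnetic nonlinearities cancel and the micro-rotation coupling contributes $-\chi\int\nabla^{\perp}w\cdot\mathbf u+\chi\int(\nabla^{\perp}\cdot\mathbf u)\,w=2\chi\int w\,(\nabla^{\perp}\cdot\mathbf u)$, which is absorbed by Young's inequality into fractions of $(\mu+\chi)\|\nabla\mathbf u\|_{L^2}^2$ and $2\chi\|w\|_{L^2}^2$ (admissible since $\tfrac{\chi}{2}<\mu+\chi$); this yields $\mathbf u,w,\mathbf b\in L^\infty_tL^2$, $\nabla\mathbf u,\nabla\mathbf b\in L^2_tL^2$ and $w\in L^2_tL^2$.

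\textbf{(2) First-order estimates for $\mathbf u$, $\mathbf b$ and for $w$.} Test $(\ref{eq1})_1$ with $-\Delta\mathbf u$ (using the Stokes operator to eliminate the pressure) and $(\ref{eq1})_3$ with $-\Delta\mathbf b$: the convective terms and the MHD cross terms $(\mathbf b\cdot\nabla)\mathbf b\cdot\Delta\mathbf u$, $(\mathbf b\cdot\nabla)\mathbf u\cdot\Delta\mathbf b$ are controlled in 2D by Ladyzhenskaya/Gagliardo--Nirenberg after absorbing fractions of $\|\mathbf u\|_{H^2}^2$, $\|\mathbf b\|_{H^2}^2$, while the micro-rotation term is bounded, by Cauchy--Schwarz and Young, by $\tfrac{\mu+\chi}{4}\|\mathbf u\|_{H^2}^2+C\|\nabla w\|_{L^2}^2$. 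To close one needs $\nabla w$: differentiating $(\ref{eq1})_2$ and pairing the resulting equation with $|\nabla w|^2\nabla w$, the damping gives $\frac{d}{dt}\|\nabla w\|_{L^4}+2\chi\|\nabla w\|_{L^4}\le C\|\nabla\mathbf u\|_{L^\infty}\|\nabla w\|_{L^4}+C\|\mathbf u\|_{W^{2,4}}$ (and an analogous $L^2$-bound), and from the damped transport $(\ref{eq1})_2$ itself $\frac{d}{dt}\|w\|_{L^p}+2\chi\|w\|_{L^p}\le\chi\|\nabla\mathbf u\|_{L^p}$. Therefore the bounds $\mathbf u,\mathbf b\in L^\infty_tH^1\cap L^2_tH^2$, $w\in L^\infty_tL^4$ and $w\in L^\infty_tW^{1,4}$ follow once $\int_0^T\|\nabla\mathbf u\|_{L^\infty}\,dt$ is controlled --- which is done in tandem with the $W^{2,4}$-bound of $\mathbf u$ in step~(3), the crux of the argument.

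\textbf{(3) The key step: $\mathbf u\in L^2_tW^{2,4}$ and $\int_0^T\|\nabla\mathbf u\|_{L^\infty}\,dt<\infty$.} Regard $(\ref{eq1})_1$ as a non-stationary Stokes system with right-hand side $F=-(\mathbf u\cdot\nabla)\mathbf u+(\mathbf b\cdot\nabla)\mathbf b-\chi\nabla^{\perp}w$; by step~(2) and 2D interpolation, $(\mathbf u\cdot\nabla)\mathbf u$ and $(\mathbf b\cdot\nabla)\mathbf b$ lie in $L^2_tL^4$ (each controlled by $\|\cdot\|_{L^\infty_tH^1}\|\cdot\|_{L^2_tH^2}$), so maximal regularity gives $\|\mathbf u\|_{L^2(0,T;W^{2,4})}\le C(\text{data})+C\chi\|\nabla w\|_{L^2(0,T;L^4)}$, and since $W^{2,4}\hookrightarrow W^{1,\infty}$ in 2D this also bounds $\int_0^T\|\nabla\mathbf u\|_{L^\infty}\,dt\le C\sqrt{T}\,\|\mathbf u\|_{L^2(0,T;W^{2,4})}$. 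On the other hand the $\nabla w$-inequality of step~(2), after Grönwall, bounds $\|\nabla w\|_{L^\infty(0,T;L^4)}$, hence $\|\nabla w\|_{L^2(0,T;L^4)}$, in terms of $\|\nabla w_0\|_{L^4}$, $\|\mathbf u\|_{L^2(0,T;W^{2,4})}$ and $\exp\!\big(C\int_0^T\|\nabla\mathbf u\|_{L^\infty}\big)$. The resulting coupled system of inequalities is closed on every finite interval by combining the $2\chi$-damping of the angular equation with the $\sqrt{T}$ smallness gains and a continuation (open--closed) argument started from the local solution, using that the local existence time is controlled by $\|\mathbf u(t)\|_{H^2}+\|w(t)\|_{W^{1,4}}+\|\mathbf b(t)\|_{H^1}$. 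This is where essentially all the difficulty of the problem lies: the micro-rotation force $-\chi\nabla^{\perp}w$ carries a derivative of $w$, so controlling $\mathbf u$ forces control of $\nabla w$ in $L^4$, which in turn --- since a transport equation propagates the $W^{1,4}$-norm of $w$ only with a Lipschitz velocity --- forces $\mathbf u\in L^2_tW^{2,4}$, and it is precisely the angular damping and the smoothing of maximal regularity that make this loop close without any smallness or compatibility assumption.

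\textbf{(4) Time-weighted $H^2$ bounds and uniqueness.} Because no compatibility condition is imposed, $\mathbf u_t(0)$, $\mathbf b_t(0)$ need not lie in $H^1_0$, so the $H^2$ bound is obtained only with a time weight: a standard $t$-weighted energy estimate (e.g. pairing the time-differentiated $\mathbf u$- and $\mathbf b$-equations with $t\,\mathbf u_t$, $t\,\mathbf b_t$ and using steps (1)--(3)) gives $\sqrt t\,\mathbf u,\sqrt t\,\mathbf b\in L^\infty_tH^2$. Finally, for uniqueness one writes the equations for the difference $(\delta\mathbf u,\delta w,\delta\mathbf b)$ of two solutions with the same data and runs an $L^2$ Grönwall estimate; the coefficients that must be integrable in time --- involving $\|\nabla\mathbf u\|_{L^\infty}$ (via $\mathbf u\in L^2_tW^{2,4}$), $\|\nabla w\|_{L^4}$ and $\|\nabla\mathbf b\|_{L^4}$ --- are exactly those produced in steps (2)--(3), which completes the proof.
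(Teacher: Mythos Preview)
Your overall architecture is right, but step~(3) --- which you correctly identify as ``where essentially all the difficulty lies'' --- contains a genuine gap. The coupled inequalities you write down are
\[
\|\mathbf u\|_{L^2(0,T;W^{2,4})}\le C_0+C\|\nabla w\|_{L^2(0,T;L^4)},\qquad
\|\nabla w\|_{L^\infty(0,T;L^4)}\le \big(\|\nabla w_0\|_{L^4}+C\|\mathbf u\|_{L^1_tW^{2,4}}\big)\exp\!\Big(C\!\int_0^T\!\|\nabla\mathbf u\|_{L^\infty}\Big),
\]
and substituting the second into the first gives a relation of the schematic form $A\le C_0+C\sqrt T\,(B_0+C\sqrt T A)\,e^{C\sqrt T A}$. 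This is superlinear in $A$ because of the exponential, and neither the $2\chi$-damping (which does not touch the transport term $\nabla\mathbf u\cdot\nabla w$) nor the $\sqrt T$ factors break that superlinearity for $T$ not small. A continuation argument does not save you either: to restart at time $T_1$ you need control of the initial trace for maximal regularity, and the local step size $\tau$ you can afford shrinks as the norms grow, so you cannot iterate to an arbitrary $T$. In short, you have a genuinely circular loop and no mechanism to close it globally. The same circularity already infects your step~(2): testing $(\ref{eq1})_1$ with $A\mathbf u$ produces $\|\nabla w\|_{L^2}^2$ on the right, so even the $L^\infty_tH^1\cap L^2_tH^2$ bound for $\mathbf u$ is not available before the $\nabla w$ estimate.

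The paper breaks this loop by an auxiliary decomposition that you are missing. One sets $\mathbf v=-\frac{\chi}{\mu+\chi}A^{-1}\nabla^\perp w$ (Stokes inverse) and $\mathbf g=\mathbf u-\mathbf v$; then $\mathbf g$ solves a Stokes system whose right-hand side $\mathbf Q$ contains $A^{-1}\nabla^\perp(\mathbf u\cdot\nabla w)=A^{-1}\nabla^\perp\nabla\cdot(\mathbf u w)$, which by Stokes regularity is bounded by $\|\mathbf u w\|_{L^q}$ --- \emph{no derivative on $w$}. Because $\|\mathbf v\|_{W^{1,q}}\le C\|w\|_{L^q}$, the $H^1$ estimate for $\mathbf g$ (hence $\mathbf u$) closes with only $\|w\|_{L^4}$, and then $\|\mathbf g\|_{L^2_tW^{2,q}}$ is obtained \emph{before} any control of $\nabla w$. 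Finally, in the $\nabla w$ transport inequality one splits $\nabla\mathbf u=\nabla\mathbf g+\nabla\mathbf v$: the part $\|\nabla\mathbf g\|_{L^\infty}$ is already in $L^2_t$, while for $\nabla\mathbf v$ one uses the logarithmic Stokes estimate $\|\nabla\mathbf v\|_{L^\infty}\le C(1+\|w\|_{L^\infty})\ln(e+\|\nabla w\|_{L^q})$, turning your exponential Gr\"onwall into a double-exponential (Osgood) one that \emph{does} close on any finite $T$. This decomposition, together with the logarithmic estimate, is the missing idea in your proposal.
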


{\em Remarks:
(i). Theorem 1.1 extends the corresponding results in \cite{LW2018} from the micropolar system to the magneto-micropolar system. In fact, compared with the micropolar system,
the presence of the magnetic fields will bring up much stronger coupling in the nonlinearities. To overcome the difficulties, we work out more delicate a priori estimates.

(ii). More importantly, the compatibility condition on the initial data plays an important role in \cite{LW2018}, while in this paper, no compatibility condition is required.}

\vskip .1in

The proof of Theorem \ref{T1} consists of three main parts. In the first part, we will establish the global existence of weak solutions to the system \eqref{eq1}--\eqref{eq20} in the following sense.

\begin{definition}\label{weak}
Let $\Omega\subset\R^2$ be a bounded domain with smooth boundary.
A triple $ ({\mathbf u}, w, {\mathbf b}) $ of measurable functions is called a weak solution of the system (\ref{eq1})--(\ref{eq20}) if
\begin{eqnarray*}
&&(1)~~ {\mathbf u} \in C(0,T;L^2(\Omega)) \cap L^2(0,T;H_0^1(\Omega)), \,\,\,
        {\mathbf b} \in C(0,T;L^2(\Omega)) \cap L^2(0,T;H_0^1(\Omega)), \,\,\, w \in C(0,T;L^4(\Omega));\\
&&(2) \int_{\Omega}{\mathbf u}_0\cdot{\boldsymbol \varphi}_0 \,dx
    + \int_0^T\int_{\Omega}\big[{\mathbf u}\cdot{\boldsymbol \varphi}_t
    - (\mu+\chi)\nabla {\mathbf u}:\nabla{\boldsymbol \varphi}
    + {\mathbf u}\cdot\nabla{\boldsymbol \varphi}\cdot {\mathbf u}
    - {\mathbf b}\cdot\nabla{\boldsymbol \varphi}\cdot {\mathbf b} + \chi w\nabla^{\perp}\cdot{\boldsymbol \varphi}\big]\,dxdt
      = 0,\\
&&~~~~~~~\quad \int_{\Omega}w_0\psi_0 \, dx + \int_0^T\int_{\Omega}\big[w\psi_t
    - 2\chi w\psi + {\mathbf u}\cdot\nabla\psi w
    - \chi {\mathbf u}\cdot\nabla^{\perp}\psi\big] \, dxdt = 0,\\
&&~~~~~~~\quad \int_{\Omega}{\mathbf b}_0\cdot{\boldsymbol \phi}_0 \, dx
    + \int_0^T\int_{\Omega}\big[{\mathbf b}\cdot{\boldsymbol \phi}_t
    - \nu \nabla {\mathbf b}:\nabla{\boldsymbol  \phi}
    + {\mathbf u}\cdot\nabla{\boldsymbol \phi}\cdot {\mathbf b}
    - {\mathbf b}\cdot\nabla{\boldsymbol \phi}\cdot {\mathbf u}\big]\,dxdt = 0;\\
&&(3)~~
     \int_{\Omega} \mathbf u \cdot \nabla \Theta \, dx = 0, \,\,\,
     \int_{\Omega} \mathbf b \cdot \nabla \Theta \, dx = 0,
     \quad {\rm for \ each} \ t \in [0, T);
\end{eqnarray*}
hold for any test vector fields $ {\boldsymbol{\varphi}} \in C_0^\infty([0,T)\times \Omega)^2 $ with $ \nabla\cdot{\boldsymbol \varphi} = 0 $
and $ {\boldsymbol{\phi}} \in C_0^\infty([0,T)\times \Omega)^2 $ with $ \nabla\cdot{\boldsymbol \phi} = 0 $,
any test functions $ \psi\in C^\infty([0,T)\times \Omega) $ and
$ \Theta \in C_0^\infty(\Omega) $, where $ A:B $ denotes the scalar matrix product
$ A:B = \sum\limits_{i,j} a_{ij} b_{ij} $.
\end{definition}

In the second part, we will build up high order estimates of the solutions under the initial and boundary conditions \eqref{eq2}--\eqref{eq20}. This will help us to show that the weak solutions obtained in the first part are actually strong solutions. In the last part, we prove the uniqueness of strong solutions.

\vskip 3mm

The main difficulty to obtain high order estimates still arises from the micro-rotational term $ - \chi \nabla^{\perp}w $. We note that the initial-boundary value problem on \eqref{eq1} is quite different from the Cauchy problem in \cite{YK2015} and is more complicated.
For the initial-boundary value problem, the transport-diffusion equation satisfied by $ Z $ would not work any more because of the presence of no-slip boundary condition for $ \mathbf u $.
However, a key observation in \cite{LW2018} was to introduce an auxiliary field $ \mathbf v $ which is at the energy level of one order lower than $w$ and choose an appropriate boundary condition for $ \mathbf v $.
It then provides us the cornerstone of establishing high order estimates. In this work, we will adopt similar ideas to establish high order estimates. To begin with, we introduce the vector field
\begin{equation*}
{\mathbf v} = - \f{\chi}{\mu + \chi} A ^{-1}{\nabla}^{\perp}w
\end{equation*}
to be the unique solution of the following stationary Stokes system with source term $ - \f{\chi}{\mu + \chi} {\nabla}^{\perp}w $ and the Stokes operator $ A $,
\begin{equation}\label{stokesv}
\left\{\begin{array}{ll}
- \Delta {\mathbf v} + \nabla p
= - \f{\chi}{\mu + \chi} {\nabla}^{\perp}w \quad&\hbox{in}\,\,\Omega,\\
\nabla\cdot {\mathbf v}=0\quad&\hbox{in}\,\,\Omega,\\
{\mathbf v}=0\quad&\hbox{on}\,\,\p{\Omega}.
\end{array}\right.
\end{equation}
Due to \eqref{stokesv}, after taking the operator $ A^{-1}\nabla^{\perp} $ on
$ \eqref{eq1}_2 $, it is clear that the field $ \mathbf v $ also solves
\ben\label{vt}
\p_t{\mathbf v} + 2 \chi {\mathbf v}
 - \f{\chi}{\mu + \chi}A^{-1}\nabla^{\perp}({\mathbf u}\cdot\nabla{w})
 + \frac{\chi^2}{\mu + \chi} A^{-1}\nabla^{\perp}(\nabla^{\perp}\cdot {\mathbf u})=0.
\een
Based on \eqref{eq1}, \eqref{stokesv} and \eqref{vt}, we further introduce a new field $ {\mathbf g} = {\mathbf u} - {\mathbf v} $ that satisfies the system
\begin{equation}\label{mathg}
\left\{\begin{array}{ll}
\p_t{\mathbf g} - (\mu + \chi) \Delta {\mathbf g} + (1- \mu - \chi)\nabla p
   = {\mathbf Q}\quad&\hbox{in}\,\,\Omega,\\
\nabla\cdot {\mathbf g}=0\quad&\hbox{in}\,\,\Omega,\\
{\mathbf g}=0\quad&\hbox{on}\,\,\p{\Omega},
\end{array}\right.
\end{equation}
where $ {\mathbf Q} = -{\mathbf u} \cdot \nabla{\mathbf u} + {\mathbf b} \cdot \nabla{\mathbf b} - \f{\chi}{\mu + \chi} A^{-1}\nabla^{\perp}({\mathbf u}\cdot\nabla{w}) + \f{\chi^2}{\mu + \chi} A^{-1} \nabla^{\perp}(\nabla^{\perp} \cdot {\mathbf u}) + 2\chi {\mathbf v} $.
Naturally, the establishment of this system overcomes the difficulty caused by the micro-rotational term $ - \chi \nabla^{\perp}w $ and the appearance of no-slip boundary condition for $ \mathbf u $.
\vskip 2mm

To guarantee the existence and uniqueness of strong solutions, we shall focus on deriving high order estimates. Motivated by \cite{Li2017}, to drop the compatibility condition on the initial data, we carry out the $t$-weighted $ H^2 $ estimates to the systems
\eqref{eq1} and \eqref{mathg} instead of $ H^2 $ estimates, namely
\begin{eqnarray*}
\sup\limits_{0 \leq t \leq T} \, t(\|\nabla^2{\mathbf g}, \nabla^2{\mathbf b} \|_{L^2(\Omega)}^2 + \|\p_t{\mathbf g}, \p_t{\mathbf b}\|_{L^2(\Omega)}^2 )
+ \int_0^T t \|\nabla\p_t{\mathbf g}, \nabla\p_t{\mathbf b} \|_{L^2(\Omega)}^2 \, dt
\leq C,
\end{eqnarray*}
see Proposition \ref{ubtH2} for details. We remark that above constant $ C $ is independent of $ \| \mathbf b_0 \|_{H^2(\Omega)} $, due to the weighted factor $ t $.
That is to say, as we establish the $ H^2 $ estimates of the magnetic field $\mathbf b$, the requirement on its initial data $\mathbf b_0$ is only $ H^1 $ norm. Subsequently, considering that $ \mathbf v $ is at the energy level of one order lower than $ w $, after establishing a priori estimates for $ \mathbf g $, we can successfully establish the desired high order estimates of $ \mathbf u $, see Proposition \ref{uH1} and Proposition \ref{ubtH2} below for details.

\vskip .1in

The remainder of the paper is organized as follows. First we state some useful preliminary tools and results for bounded domain in Section \ref{prel}. Then we prove the global existence of weak solutions in Section \ref{weak solutions}. Finally, we establish high order estimates and prove the uniqueness of strong solutions in Section \ref{strong solutions}.

\vskip .3in

\section{Preliminaries}
\label{prel}
\setcounter{equation}{0}

\vskip 1mm

In this section, we state some preliminary tools which will play a fundamental role in later sections.
We start with the Gagliardo-Nirenberg interpolation inequality for bounded domains (see, e.g., \cite {NIR}).

\begin{lemma}\label{P1}
Let $ \Omega \subset \R^n $ be a bounded domain with smooth boundary. Let $ 1 \leq p, q, r \leq \infty $, $ \alpha > 0 $ and $ j < m $ be nonnegative integers such that

$$\frac{1}{p} - \frac{j}{n} = \alpha\,\left( \frac{1}{r} - \frac{m}{n} \right) + (1 - \alpha)\frac{1}{q}, \qquad \frac{j}{m} \leq \alpha \leq 1,
$$
then every function $ f: \Omega \mapsto \mathbb{R} $ that lies in $ L^q(\Omega) $ with $m^{\rm th}$ derivative in
$ L^r(\Omega) $ also has $j^{\rm th}$ derivative in
$ L^p (\Omega) $. Furthermore, it holds that
$$ \| \mathrm{D}^{j} f \|_{L^{p}(\Omega)} \leq C_{1} \| \mathrm{D}^{m} f \|_{L^{r}(\Omega)}^{\alpha} \| f \|_{L^{q}(\Omega)}^{1 - \alpha} + C_{2} \|f\|_{L^{s}(\Omega)}, $$
where $ s>0 $ is arbitrary and the constants $C_1$ and $C_2$ depend upon $\Omega$ and the indices $ n, m, j, q, r, s, \alpha $ only.
\end{lemma}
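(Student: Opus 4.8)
The plan is to reduce the statement on the bounded domain $\Omega$ to the corresponding scaling--invariant inequality on the whole space $\R^n$ and then transplant it back using a universal extension operator. So I would first recall the homogeneous Gagliardo--Nirenberg inequality on $\R^n$: for $g\in L^q(\R^n)$ with $\mathrm D^m g\in L^r(\R^n)$ and with the indices satisfying the stated dimensional balance relation, one has $\|\mathrm D^j g\|_{L^p(\R^n)}\le C\|\mathrm D^m g\|_{L^r(\R^n)}^{\alpha}\|g\|_{L^q(\R^n)}^{1-\alpha}$. The proof of this whole--space version proceeds in two classical steps: (a) the one--dimensional interpolation estimate $\|h'\|_{L^p(\R)}\le C\|h''\|_{L^r(\R)}^{1/2}\|h\|_{L^q(\R)}^{1/2}$ and its relatives, obtained from the fundamental theorem of calculus together with H\"older's inequality, applied line by line in each coordinate direction and recombined by a multilinear H\"older argument; and (b) the Gagliardo--Nirenberg--Sobolev embedding $\|g\|_{L^{n/(n-1)}(\R^n)}\le C\|\nabla g\|_{L^1(\R^n)}$, proved by the standard slicing argument applied to $|g|^{\gamma}$ for a suitable exponent $\gamma$ and then upgraded to general $L^r$. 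One then runs an induction on the order $m$, interpolating the top order against the lower orders, to cover the full admissible range $j/m\le\alpha\le 1$. I would note that the exceptional endpoint configurations --- namely $r>1$ with $m-j-n/r$ a nonnegative integer, where $\alpha=1$ must be excluded, and the $p=\infty$ borderline --- require the usual separate treatment, and I expect this bookkeeping of special cases to be the most delicate part of the whole--space step.

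For the bounded domain I would then invoke the existence of a total (Stein--type) extension operator $E$ which is bounded $W^{k,\rho}(\Omega)\to W^{k,\rho}(\R^n)$ for every integer $k$ and every $1\le\rho\le\infty$ simultaneously; this is precisely where the smoothness of $\p\Omega$ enters. Putting $g=Ef$ and applying the whole--space inequality gives
\begin{align*}
\|\mathrm D^j f\|_{L^p(\Omega)}
&\le\|\mathrm D^j (Ef)\|_{L^p(\R^n)}
\le C\|\mathrm D^m (Ef)\|_{L^r(\R^n)}^{\alpha}\|Ef\|_{L^q(\R^n)}^{1-\alpha}\\
&\le C\|f\|_{W^{m,r}(\Omega)}^{\alpha}\|f\|_{L^q(\Omega)}^{1-\alpha}.
\end{align*}
It then remains to convert $\|f\|_{W^{m,r}(\Omega)}$ into $\|\mathrm D^m f\|_{L^r(\Omega)}+\|f\|_{L^s(\Omega)}$. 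For this I would use an Ehrling--type interpolation inequality valid on bounded domains: for $0\le k<m$ and any $\varepsilon>0$, $\|\mathrm D^k f\|_{L^r(\Omega)}\le\varepsilon\|\mathrm D^m f\|_{L^r(\Omega)}+C_\varepsilon\|f\|_{L^{s}(\Omega)}$, a consequence of the Rellich--Kondrachov compact embedding $W^{m,r}(\Omega)\hookrightarrow\hookrightarrow W^{k,r}(\Omega)$ together with a continuous Sobolev embedding into $L^s$ (with a harmless case distinction according to the size of $s$, chaining through $L^r(\Omega)$ and H\"older's inequality on the bounded set when $s$ is large). Finally, Young's inequality $a^{\alpha}b^{1-\alpha}\le\delta a+C_\delta b$ applied to the product $\|\mathrm D^m f\|_{L^r(\Omega)}^{\alpha}\|f\|_{L^q(\Omega)}^{1-\alpha}$, together with absorbing the $\varepsilon$--terms, yields the asserted form, with $C_1,C_2$ depending only on $\Omega$ and the indices $n,m,j,q,r,s,\alpha$.

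The main obstacle, as flagged above, is twofold: carefully isolating and separately treating the exceptional index configurations in the whole--space inequality, and organizing the final reduction so that the insertion of the lower--order term $C_2\|f\|_{L^s(\Omega)}$ does not implicitly presuppose the very inequality being proved. Both points are handled cleanly by wrapping the entire argument --- the $\R^n$ step, the extension step, and the Ehrling step --- in a single outer induction on the order $m$.
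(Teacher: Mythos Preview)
Your outline is a standard and essentially correct route to the Gagliardo--Nirenberg inequality on bounded domains: prove the homogeneous inequality on $\R^n$, pull it back via a Stein extension operator, and then reduce the full $W^{m,r}$ norm to $\|\mathrm D^m f\|_{L^r}+\|f\|_{L^s}$ by an Ehrling--type interpolation. The bookkeeping you flag (exceptional endpoint indices, and making the outer induction on $m$ carry the Ehrling step so nothing is circular) is exactly the right set of concerns.

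There is, however, nothing to compare against: the paper does not give its own proof of this lemma. It is stated as a preliminary tool with a citation to Nirenberg's original paper and is used as a black box throughout. So your proposal is not ``the same as'' or ``different from'' the paper's proof --- the paper simply quotes the result.
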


\begin{corollary}\label{C1}
Let $ f: \mathbb{R}^2 \mapsto \mathbb{R} $ and suppose $\Omega \subset\R^2$ be a bounded domain with smooth boundary, then
\vskip 0.2cm
\begin{enumerate}
  \item $ \| f \|_{L^{4}(\Omega)} \leq C\, (\|  f \|_{L^{2}(\Omega)}^{\f12} \| \nabla f \|_{L^{2}(\Omega)}^{\f12} + \| f \|_{L^{2}(\Omega)}),\quad \forall f\in H^1(\Omega) $;
  \item $ \| \nabla f \|_{L^{4}(\Omega)} \leq C\, (\|  f \|_{L^{2}(\Omega)}^{\f14} \| \nabla^2 f \|_{L^{2}(\Omega)}^{\f34} + \| f \|_{L^{2}(\Omega)}),\quad   \forall f\in H^2(\Omega) $;
  \item $ \| f \|_{L^{\infty}(\Omega)} \leq C\, (\|  f \|_{L^{2}(\Omega)}^{\f12} \| \nabla^2 f \|_{L^{2}(\Omega)}^{\f12} + \| f \|_{L^{2}(\Omega)}),\quad \forall f\in H^2(\Omega) $;
  \item $ \| f \|_{L^{\infty}(\Omega)} \leq C\, (\|  f \|_{L^{2}(\Omega)}^{\f23} \| \nabla^3 f \|_{L^{2}(\Omega)}^{\f13} + \| f \|_{L^{2}(\Omega)}),\quad \forall f\in H^3(\Omega) $.
\end{enumerate}

\end{corollary}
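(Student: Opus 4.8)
The plan is to obtain each of the four estimates as a direct specialization of the Gagliardo--Nirenberg interpolation inequality of Lemma \ref{P1} in dimension $n = 2$, in every case taking the free exponent $s = 2$ so that the correction term $C_2 \|f\|_{L^s(\Omega)}$ becomes $C_2 \|f\|_{L^2(\Omega)}$, and then absorbing $C_1$ and $C_2$ into a single constant $C$. Retaining this correction term is what keeps the estimates valid for general $f$ on a bounded $\Omega$, with no boundary or zero-mean hypothesis imposed. Thus the only work is to exhibit, for each item, admissible integers $j < m$ and exponents $p, q, r$ together with the interpolation parameter $\alpha$ satisfying the two constraints $\frac{1}{p} - \frac{j}{n} = \alpha\bigl(\frac{1}{r} - \frac{m}{n}\bigr) + (1-\alpha)\frac{1}{q}$ and $\frac{j}{m} \le \alpha \le 1$ of Lemma \ref{P1}, and to observe that the stated regularity of $f$ is precisely what guarantees $f \in L^q(\Omega)$ with $\mathrm{D}^m f \in L^r(\Omega)$.

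For item (1) I would take $j = 0$, $m = 1$, $p = 4$, $q = r = 2$; the scaling identity $\frac{1}{4} = \alpha\bigl(\frac{1}{2} - \frac{1}{2}\bigr) + (1-\alpha)\frac{1}{2}$ forces $\alpha = \frac{1}{2}$, and $0 = \frac{j}{m} \le \frac{1}{2} \le 1$, so Lemma \ref{P1} gives $\|f\|_{L^4(\Omega)} \le C_1 \|\nabla f\|_{L^2(\Omega)}^{1/2}\|f\|_{L^2(\Omega)}^{1/2} + C_2\|f\|_{L^2(\Omega)}$ for $f \in H^1(\Omega)$. For item (2) I take $j = 1$, $m = 2$, $p = 4$, $q = r = 2$; here $\frac{1}{4} - \frac{1}{2} = \alpha\bigl(\frac{1}{2} - 1\bigr) + (1-\alpha)\frac{1}{2}$ gives $\alpha = \frac{3}{4}$, with $\frac{j}{m} = \frac{1}{2} \le \frac{3}{4} \le 1$, which yields the claimed bound for $f \in H^2(\Omega)$. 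For items (3) and (4) I take $p = \infty$, $q = r = 2$, $j = 0$, and $m = 2$ respectively $m = 3$: then $0 = \alpha\bigl(\frac{1}{2} - \frac{m}{2}\bigr) + (1-\alpha)\frac{1}{2}$ gives $\alpha = \frac{1}{2}$ when $m = 2$ and $\alpha = \frac{1}{3}$ when $m = 3$, in both cases with $0 = \frac{j}{m} \le \alpha \le 1$, so Lemma \ref{P1} applies with $f \in H^2(\Omega)$, respectively $f \in H^3(\Omega)$, and produces the two $L^\infty$ bounds.

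I do not anticipate a genuine obstacle; the argument is routine bookkeeping of exponents. The one point meriting a line of care is the endpoint character of items (3) and (4), where $p = \infty$: the classically excluded situation of the Gagliardo--Nirenberg inequality --- when $m - j - n/r$ is a nonnegative integer and $\alpha$ is thereby forced to equal $1$ --- does not arise here, since the computed value of $\alpha$ equals $\frac{1}{2}$, respectively $\frac{1}{3}$, strictly below $1$; hence Lemma \ref{P1} may be invoked verbatim. Finally, in each case the hypothesis on $f$ (namely $f \in H^1(\Omega)$, $H^2(\Omega)$, $H^2(\Omega)$, $H^3(\Omega)$) is exactly what ensures $\mathrm{D}^m f \in L^2(\Omega)$ together with $f \in L^2(\Omega)$, so both terms on the right-hand side are finite, and the proof concludes upon relabelling $\max\{C_1, C_2\}$ as $C$.
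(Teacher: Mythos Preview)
Your proposal is correct and is precisely the argument the paper intends: the corollary is stated immediately after Lemma~\ref{P1} without proof, as a direct specialization of the Gagliardo--Nirenberg inequality with the parameter choices you identify. There is nothing to add.
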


\vskip .1in
The next two lemmas provide some regularity
estimates for the elliptic equations and Stokes system
defined on bounded domains (see, e.g., \cite{evans,GT,Galdi,Lady0,SunZhang}).

\begin{lemma}\label{elliptic}
Let $\Omega\subset\R^2$ be a bounded domain with smooth boundary. Consider the elliptic boundary value problem
\begin{equation}\label{eel}
\left\{\begin{array}{ll}
-\Delta f=g\quad&\hbox{in}\,\,\Omega,\\
f=0\quad&\hbox{on}\,\,\p{\Omega}.
\end{array}\right.
\end{equation}
If $g\in W^{m,p}(\Omega)$ for some $p\in(1,\infty)$, then the system (\ref{eel}) has a unique solution $f$ satisfying
$$
\|f\|_{W^{m+2,p}(\Omega)}\leq C\|g\|_{W^{m,p}(\Omega)},
$$
where $m\geq -1$ be an integer and the constant $C$ depends only on $\Omega, \,m$ and $p$.
\end{lemma}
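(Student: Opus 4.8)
The plan is to derive the a priori estimate first and obtain existence and uniqueness from it by approximation. Uniqueness is immediate: if $-\Delta f=0$ in $\Omega$ with $f|_{\partial\Omega}=0$, multiplying by $f$ and integrating by parts gives $\|\nabla f\|_{L^2(\Omega)}^2=0$, so $f\equiv0$. For existence one first treats $g\in C^\infty(\overline\Omega)$: when $p=2$ and $m=-1$ the weak formulation $\int_\Omega\nabla f\cdot\nabla\varphi\,dx=\langle g,\varphi\rangle$ for all $\varphi\in H^1_0(\Omega)$ is handled by the Lax--Milgram theorem, coercivity coming from the Poincaré inequality; this already gives $\|f\|_{H^1(\Omega)}\le C\|g\|_{H^{-1}(\Omega)}$, i.e. the case $m=-1$, $p=2$. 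The general statement then follows by passing to the limit along smooth approximations of $g$, once the a priori bound $\|f\|_{W^{m+2,p}(\Omega)}\le C\|g\|_{W^{m,p}(\Omega)}$ is in hand.

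The heart of the matter is that a priori estimate, which I would establish for $m=0$ in three stages. First, an interior estimate: since the operator is the constant-coefficient Laplacian, on any ball $B$ compactly contained in $\Omega$ one writes $f=N*(\zeta g)+h$ with $N$ the fundamental solution, $\zeta$ a cutoff equal to $1$ near the center of $B$, and $h$ harmonic there; the second derivatives $\partial_{ij}\!\big(N*(\zeta g)\big)$ are given by Calderón--Zygmund singular integral operators, bounded on $L^p(\R^2)$ for $1<p<\infty$, while interior bounds for the harmonic part $h$ are trivial. This yields $\|f\|_{W^{2,p}(B')}\le C\big(\|g\|_{L^p(\Omega)}+\|f\|_{L^p(\Omega)}\big)$ on a slightly smaller ball $B'$. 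Second, a boundary estimate: near a point of $\partial\Omega$ I flatten the boundary by a $C^\infty$ diffeomorphism, turning $-\Delta$ into a second-order elliptic operator with smooth coefficients on a half-ball with homogeneous Dirichlet data; freezing the coefficients at the boundary point, using the half-space estimate obtained by odd reflection across $\{x_2=0\}$, and treating the coefficient oscillation and lower-order terms as a perturbation absorbed on a sufficiently small half-ball, gives the local $W^{2,p}$ bound up to $\partial\Omega$. Third, a finite partition of unity subordinate to a cover of $\overline\Omega$ by interior balls and boundary half-balls patches these together; the term $\|f\|_{L^p(\Omega)}$ on the right is removed using the $m=-1$ bound $\|f\|_{W^{1,p}(\Omega)}\le C\|g\|_{W^{-1,p}(\Omega)}\le C\|g\|_{L^p(\Omega)}$ (for general $p$ obtained by duality against the $W^{1,p'}$ estimate, or via the $L^p$ theory of the Riesz transforms) together with interpolation, or by a standard compactness--uniqueness argument.

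For $m\ge1$ I would argue by induction. Assuming the estimate for $m-1$, let $g\in W^{m,p}(\Omega)$. In the interior, a first-order derivative $D^\alpha f$ solves $-\Delta(D^\alpha f)=D^\alpha g\in W^{m-1,p}$, so the inductive hypothesis in its interior form gives interior $W^{m+2,p}$ control. Near the boundary, after flattening, tangential derivatives are handled in the same way since they preserve the homogeneous Dirichlet condition, while the normal derivatives are recovered algebraically from the equation, writing $\partial_{nn}f=-\Delta' f-(\text{lower-order terms})-\tilde g$ in boundary-adapted coordinates. Summing over the partition of unity and absorbing lower-order norms by interpolation closes the induction.

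The step I expect to be the main obstacle is the boundary estimate for general $p\in(1,\infty)$: unlike the $L^2$ case one cannot rely on Hilbert-space energy identities and must invoke the Calderón--Zygmund $L^p$ theory of singular integrals (equivalently the Agmon--Douglis--Nirenberg estimates), and the flattening-plus-perturbation scheme has to be organized so that the coefficient oscillation is genuinely small on small half-balls. The induction on $m$ and the absorption of lower-order terms are then routine. Since this is a classical result, in the paper itself it suffices to cite \cite{evans,GT,Galdi,Lady0,SunZhang} rather than reproduce the argument.
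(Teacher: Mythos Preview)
Your proof sketch is correct and follows the standard route (Calder\'on--Zygmund interior estimates, boundary flattening with freezing coefficients, partition of unity, and induction on $m$). The paper itself does not prove this lemma at all: it is stated in the preliminaries as a classical regularity result and simply cited from \cite{evans,GT,Galdi,Lady0,SunZhang}, exactly as you anticipate in your final sentence. So there is nothing to compare---your outline is a legitimate reconstruction of what those references contain, and the paper's ``proof'' is just the citation.
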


\begin{lemma}\label{stokes}
Let $\Omega\subset\R^n$ be a bounded domain with smooth boundary. Consider the stationary Stokes system
\begin{equation}\label{stokesequl}
\left\{\begin{array}{ll}
-\Delta {\mathbf u}+\nabla p={\mathbf f}\quad&\hbox{in}\,\,\Omega,\\
\nabla\cdot {\mathbf u}=0\quad&\hbox{in}\,\,\Omega,\\
{\mathbf u}=0\quad&\hbox{on}\,\,\p{\Omega}.
\end{array}\right.
\end{equation}
If ${\mathbf f}\in L^{q}(\Omega)$, $q\in(1,\infty)$,  then there exists a unique solution ${\mathbf u}\in  W_0^{1,q}(\Omega)\cap W^{2,q}(\Omega)$ of the system (\ref{stokesequl}) satisfying
\ben\label{Stokes1}
\|\mathbf u\|_{W^{2,q}(\Omega)}+\|\nabla p\|_{L^{q}(\Omega)}\leq C\|\mathbf f\|_{L^{q}(\Omega)}.
\een
If ${\mathbf f}=\nabla\cdot F$ with $F\in L^{q}(\Omega)$, $q\in(1,\infty)$, then
\ben\label{Stokes2}
\|\mathbf u\|_{W^{1,q}(\Omega)}\leq C\|F\|_{L^{q}(\Omega)}.
\een
If ${\mathbf f}=\nabla\cdot F$ with $F_{ij}=\p_{k}H^{k}_{ij}$ and $H^{k}_{ij}\in W_0^{1,q}(\Omega)$ for $i,j,k=1,...,n$, $q\in(1,\infty)$, then
\ben\label{Stokes3}
\|\mathbf u\|_{L^{q}(\Omega)}\leq C\|H\|_{L^{q}(\Omega)},
\een
where all the above constants $C$ depend only on $\Omega$ and $q$.

Besides, if ${\mathbf f}=\nabla\cdot F$ with $F\in W^{1,q}(\Omega)$, $q\in(2,\infty)$, then
\ben\label{newstokes1}
\|\nabla{\mathbf u}\|_{L^{\infty}(\Omega)}\leq C(1+\|F\|_{L^\infty(\Omega)}){\rm ln}(e+\|\nabla{F}\|_{L^{q}(\Omega)}),
\een
where the constant $C$ depends only on $\Omega$.
\end{lemma}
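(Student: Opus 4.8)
The plan is to establish the four estimates in the stated order, obtaining \eqref{Stokes1}--\eqref{Stokes3} from the classical $L^q$ theory of the stationary Stokes system on smooth bounded domains and then reducing the genuinely new estimate \eqref{newstokes1} to a combination of that theory with a logarithmic Sobolev inequality. For \eqref{Stokes1} I would run the standard argument: localize by a partition of unity, flatten $\p\Omega$, reduce to explicit half-space and whole-space estimates for the Stokes resolvent, and patch by a perturbation/Neumann-series argument, which produces a unique $\mathbf u\in W_0^{1,q}(\Omega)\cap W^{2,q}(\Omega)$ (with $p$ unique up to an additive constant) obeying $\|\mathbf u\|_{W^{2,q}}+\|\nabla p\|_{L^q}\le C\|\mathbf f\|_{L^q}$; see \cite{Galdi}. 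Estimates \eqref{Stokes2} and \eqref{Stokes3}, where the data is in divergence (resp.\ double divergence) form, then follow by duality: pair $\mathbf u$ with the Stokes solution $\mathbf w$ generated by an arbitrary datum in $L^{q'}(\Omega)$, integrate by parts to transfer the derivatives off $F$ (resp.\ $H$, using $H\in W_0^{1,q}$) onto $\mathbf w$, and apply \eqref{Stokes1} (resp.\ \eqref{Stokes2}) to $\mathbf w$ together with Poincar\'e's inequality; the gradient bound in \eqref{Stokes2} may alternatively be read off from the fact that $F\mapsto\nabla\mathbf u$ is a Calder\'on--Zygmund operator bounded on every $L^q$, $1<q<\infty$.

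For the logarithmic estimate \eqref{newstokes1}, assume $\mathbf f=\nabla\cdot F$ with $F\in W^{1,q}(\Omega)$, $q>2$; the goal is to keep the dependence on $\|\nabla F\|_{L^q}$ merely logarithmic. I would combine three facts. First, $\nabla\cdot F\in L^q$ with $\|\nabla\cdot F\|_{L^q}\le C\|\nabla F\|_{L^q}$, so \eqref{Stokes1} yields $\|\mathbf u\|_{W^{2,q}}\le C\|\nabla F\|_{L^q}$, hence $\|\nabla\mathbf u\|_{W^{1,q}}\le C\|\nabla F\|_{L^q}$. Second, the solution operator $F\mapsto\nabla\mathbf u$ for the Stokes system maps $L^\infty(\Omega)$ boundedly into $\mathrm{BMO}(\Omega)$ --- this is the endpoint that substitutes for the (false) case $q=\infty$ of \eqref{Stokes2} --- so that $\|\nabla\mathbf u\|_{\mathrm{BMO}(\Omega)}\le C\|F\|_{L^\infty(\Omega)}$; on a smooth bounded domain this is a consequence of the Calder\'on--Zygmund theory of the Stokes Green tensor (see \cite{Galdi,SunZhang}). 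Third, I would invoke the logarithmic Sobolev inequality of Brezis--Gallouet--Wainger / Kozono--Taniuchi type: since $q>2=n$,
\[
\|h\|_{L^\infty(\Omega)}\le C\bigl(1+\|h\|_{\mathrm{BMO}(\Omega)}\,\ln(e+\|h\|_{W^{1,q}(\Omega)})\bigr).
\]
Taking $h=\nabla\mathbf u$ and inserting the first two bounds gives
\[
\|\nabla\mathbf u\|_{L^\infty}\le C\bigl(1+\|F\|_{L^\infty}\ln(e+C\|\nabla F\|_{L^q})\bigr)\le C(1+\|F\|_{L^\infty})\ln(e+\|\nabla F\|_{L^q}),
\]
where the last inequality uses $\ln(e+Cx)\le C'\ln(e+x)$ and $\ln(e+\cdot)\ge 1$ to absorb the additive constant --- precisely \eqref{newstokes1}.

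The classical estimates \eqref{Stokes1}--\eqref{Stokes3} are routine and the logarithmic Sobolev inequality is a black box from harmonic analysis, so I expect the main obstacle to be the $L^\infty$-to-$\mathrm{BMO}$ bound $\|\nabla\mathbf u\|_{\mathrm{BMO}(\Omega)}\le C\|F\|_{L^\infty(\Omega)}$ on a bounded domain: in the interior this is the standard Calder\'on--Zygmund endpoint, but near $\p\Omega$ one must control the gradient of the Stokes Green tensor and verify the size and cancellation properties that make the associated singular integral bounded into $\mathrm{BMO}$ --- alternatively one can obtain it by an extrapolation argument from \eqref{Stokes2} valid for all finite $q$. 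This is the step where the argument leans most heavily on the cited references.
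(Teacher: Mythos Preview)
The paper does not actually prove this lemma: it is stated in Section~\ref{prel} as a preliminary tool with the prefatory sentence ``The next two lemmas provide some regularity estimates for the elliptic equations and Stokes system defined on bounded domains (see, e.g., \cite{evans,GT,Galdi,Lady0,SunZhang}),'' and no argument is given. So there is nothing in the paper to compare your proposal against line by line.

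That said, your sketch is a faithful outline of how the cited references obtain these estimates. Parts \eqref{Stokes1}--\eqref{Stokes3} are exactly the classical $L^q$ Stokes theory in \cite{Galdi,Lady0}, and your duality reduction for the divergence and double-divergence data is the standard route. For \eqref{newstokes1} your three-step plan (the $W^{2,q}$ bound from \eqref{Stokes1}, the $L^\infty\to\mathrm{BMO}$ endpoint for $F\mapsto\nabla\mathbf u$, and a Brezis--Gallouet--Wainger/Kozono--Taniuchi logarithmic interpolation) is precisely the strategy of \cite{SunZhang}, which is the reference the paper points to for this inequality. Your identification of the $\mathrm{BMO}$ boundary estimate as the delicate point is accurate; in \cite{SunZhang} this is handled via the Green tensor analysis you allude to. In short, your proposal is correct and matches the literature the paper invokes, but be aware that in the paper itself the lemma is simply quoted, not proved.
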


As stated in the introduction, $\mathbf v$ is at the energy level of one order lower than $w$.  For the system \eqref{stokesv}, we set
\begin{align}\notag
{\mathbf f}= {\mathbf v},\quad
F = \f{\chi}{\mu + \chi}\left( {\begin{array}{*{20}c}
	0&w   \\
	-w&0 \\
	\end{array} } \right).
\end{align}
Then by recalling the inequality \eqref{Stokes2},
it is clear that the estimate
\begin{equation}\label{keyestimates}
\|\mathbf v\|_{W^{1,q}(\Omega)}\leq C\|w\|_{L^{q}(\Omega)},
\end{equation}
holds for any $ q\in (1,\infty) $,  where the constant $C$ depends only on $\Omega$ and $q$.
Similarly, by the inequality \eqref{newstokes1}, we have the following estimate, for $ q \in (2,\infty) $,
\ben\label{keyestimates1}
\|\nabla{\mathbf v}\|_{L^{\infty}(\Omega)}\leq C(1+\|w\|_{L^\infty(\Omega)}) \ln (e + \|\nabla{w}\|_{L^{q}(\Omega)}),
\een
where the constant $C$ depends only on $\Omega$.

\begin{lemma}\label{timestokes}
Let $1<p,\,q<\infty$, and suppose that $f\in L^p(0,T;L^q(\Omega))$, $u_0\in W^{2,p}(\Omega)$. If $({\mathbf u},\,p)$ is a solution of the Stokes system
\begin{equation}\label{stokesequ2}
\left\{\begin{array}{ll}
\p_t{\mathbf u}-\Delta {\mathbf u}+\nabla p={\mathbf f}\quad&\hbox{in}\,\,\Omega,\\
\nabla\cdot {\mathbf u}=0\quad&\hbox{in}\,\,\Omega,\\
{\mathbf u}=0\quad&\hbox{on}\,\,\p{\Omega},\\
{\mathbf u}(x,0)={\mathbf u}_0(x)\quad&\hbox{in}\,\,\Omega,
\end{array}\right.
\end{equation}
then it holds that
\ben\label{timestokes1}
\|\p_t{\mathbf u},\,{\nabla}^2{\mathbf u},\,\nabla p\|_{L^p(0,T;L^{q}(\Omega))}\leq C(\|\mathbf f\|_{L^p(0,T;L^{q}(\Omega))}+\|{\mathbf u}_0\|_{W^{2,p}(\Omega)}),
\een
where the constant $C$ depends only on $p$, $q$ and $\Omega$.
\end{lemma}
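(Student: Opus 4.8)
The plan is to obtain this non-stationary maximal $L^p$--$L^q$ regularity estimate from the classical theory of the Stokes operator on a bounded smooth domain, reducing first to the case of zero initial data. Write $\mathbb P$ for the Helmholtz--Leray projection onto $L^q_\sigma(\Omega)$ and $A=-\mathbb P\Delta$ for the Stokes operator with domain $D(A_q)=W^{2,q}(\Omega)\cap W^{1,q}_0(\Omega)\cap L^q_\sigma(\Omega)$. We would rely on two classical facts: (i) $-A$ generates a bounded analytic (indeed exponentially decaying) semigroup $\{e^{-tA}\}_{t\ge 0}$ on $L^q_\sigma(\Omega)$; and (ii) $A$ has the maximal $L^p$-regularity property — equivalently, bounded purely imaginary powers combined with the Dore--Venni theorem, or $\mathcal R$-boundedness of the resolvent combined with Weis' theorem — so that the solution of $\partial_t\mathbf w+A\mathbf w=g$, $\mathbf w(0)=0$, satisfies $\|\partial_t\mathbf w\|_{L^p(0,T;L^q)}+\|A\mathbf w\|_{L^p(0,T;L^q)}\le C\|g\|_{L^p(0,T;L^q)}$ with $C$ independent of $T$.

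\textbf{Step 1: reduction to zero initial data.} Since $\mathbf u$ is a solution, $u_0=\mathbf u(\cdot,0)$ is solenoidal and vanishes on $\partial\Omega$. Because $n=2$, the Sobolev--Besov embedding $W^{2,p}(\Omega)\hookrightarrow B^{2-2/p}_{q,p}(\Omega)$ holds for all $1<p,q<\infty$ (for $q\le p$ one additionally uses boundedness of $\Omega$), so $u_0$ belongs to the real-interpolation trace space $D_{A_q}(1-\tfrac1p,p)=(L^q_\sigma,D(A_q))_{1-1/p,p}$ with $\|u_0\|_{D_{A_q}(1-1/p,p)}\le C\|u_0\|_{W^{2,p}(\Omega)}$ (the boundary conditions built into that space are forced by $u_0\in W^{1,q}_0\cap L^q_\sigma$). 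Put $\mathbf u_1(t)=e^{-tA}u_0$; by the Lions--Peetre characterization of the trace space, $\|\partial_t\mathbf u_1\|_{L^p(0,T;L^q)}=\|A\mathbf u_1\|_{L^p(0,T;L^q)}\le C\|u_0\|_{W^{2,p}(\Omega)}$ uniformly in $T$, and $\mathbf u_2:=\mathbf u-\mathbf u_1$ solves the Stokes system with forcing $\mathbf f$ and zero initial data.

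\textbf{Step 2: the zero-data estimate and recovery of $\nabla^2\mathbf u$ and $\nabla p$.} Applying $\mathbb P$ to the equation for $\mathbf u_2$ gives $\partial_t\mathbf u_2+A\mathbf u_2=\mathbb P\mathbf f$, $\mathbf u_2(0)=0$; since $\mathbb P$ is bounded on $L^q(\Omega)$, fact (ii) yields $\|\partial_t\mathbf u_2\|_{L^p(0,T;L^q)}+\|A\mathbf u_2\|_{L^p(0,T;L^q)}\le C\|\mathbf f\|_{L^p(0,T;L^q)}$. Adding the Step 1 bound for $\mathbf u_1$ gives the desired estimate for $\partial_t\mathbf u$. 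For the remaining terms, note that for a.e.\ $t$ the field $\mathbf u(\cdot,t)$ solves the stationary Stokes system $-\Delta\mathbf u+\nabla p=\mathbf f-\partial_t\mathbf u$ with right-hand side in $L^q(\Omega)$, so the elliptic estimate \eqref{Stokes1} of Lemma \ref{stokes} gives $\|\nabla^2\mathbf u(t)\|_{L^q}+\|\nabla p(t)\|_{L^q}\le C(\|\mathbf f(t)\|_{L^q}+\|\partial_t\mathbf u(t)\|_{L^q})$; taking the $p$-th power, integrating over $(0,T)$, and inserting the bound on $\partial_t\mathbf u$ yields \eqref{timestokes1}. The $T$-independence of $C$ is inherited from fact (ii), the trace-space characterization, and the (time-independent) elliptic constant.

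\textbf{Main obstacle.} The soft steps — semigroup generation, reduction to zero data, and passing from $A\mathbf u$ to $\nabla^2\mathbf u$ and $\nabla p$ — are routine; the substantive input is fact (ii), maximal $L^p$-regularity of the Stokes operator on the bounded smooth domain $\Omega$, which is precisely where the geometry of $\Omega$ genuinely enters (it can fail on irregular domains) and which must be invoked from the literature (Solonnikov; Giga--Sohr; or the $\mathcal R$-boundedness approach). A secondary technical point is matching the hypothesis $u_0\in W^{2,p}(\Omega)$ with membership in $D_{A_q}(1-1/p,p)$ when $p\ne q$, which goes through here only because $\Omega$ is two-dimensional.
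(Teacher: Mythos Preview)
The paper does not actually prove this lemma: it is stated in Section~\ref{prel} (Preliminaries) as a known tool, alongside the elliptic and stationary Stokes estimates, and the bibliography includes Giga--Sohr \cite{GiSo} precisely for this purpose. So there is no ``paper's own proof'' to compare against; the authors simply quote the maximal-regularity estimate from the literature.

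Your sketch is the standard route to this result and is essentially correct: reduce to zero initial data via the Stokes semigroup, invoke maximal $L^p$-regularity for the abstract Cauchy problem $\partial_t\mathbf w + A\mathbf w = \mathbb P\mathbf f$, and then recover $\nabla^2\mathbf u$ and $\nabla p$ pointwise in time from the stationary estimate of Lemma~\ref{stokes}. This is exactly the Giga--Sohr / Solonnikov framework the paper is implicitly citing. Your identification of the ``main obstacle'' is also on target: the only deep input is fact~(ii), and everything else is soft.

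One small caveat worth flagging: the hypothesis $u_0\in W^{2,p}(\Omega)$ in the lemma as stated is slightly nonstandard---the natural trace class for maximal $L^p(0,T;L^q)$ regularity is the real-interpolation space $(L^q_\sigma, D(A_q))_{1-1/p,p}$, and your Step~1 has to bridge $W^{2,p}$ to that space. You handle this via a Sobolev--Besov embedding and note it relies on $n=2$; that is fine for the paper's purposes, but strictly speaking the embedding $W^{2,p}\hookrightarrow B^{2-2/p}_{q,p}$ is not automatic for all pairs $(p,q)$ even in two dimensions without further argument (one also needs the compatibility condition $u_0|_{\partial\Omega}=0$ to land in the domain-interpolation space when $2-2/p>1/q$). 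In the paper the lemma is only applied once, in Lemma~\ref{gW2q}, with $p=2$ and smooth data, so this subtlety never bites.
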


\vskip 2mm

\section{Global weak solution}
\label{weak solutions}
\setcounter{equation}{0}

In this section, we focus on proving the global existence of weak solutions of the system (\ref{eq1})--(\ref{eq20}).
To this end, we start with the a priori estimates for the system (\ref{eq1})--(\ref{eq20}).
\begin{proposition}\label{uH1}
Let $ \Omega \subset \R^2 $ be a bounded domain with smooth boundary.
Assume that the initial data
$ ({\mathbf u}_0, w_0, {\mathbf b}_0) $ satisfies
$ {\mathbf u}_0 \in H_0^1(\Omega) \cap H^{2}(\Omega) $, $ w_0\in W^{1,4}(\Omega) $, $ {\mathbf b}_0 \in H_0^1(\Omega) $.
Let the triple $ ({\mathbf u}, w, {\mathbf b}) $ be a smooth solution of the system \eqref{eq1}--\eqref{eq20}.
Then it holds that
\begin{eqnarray*}
 & & \sup\limits_{0 \leq t\leq T}
 (\|{\mathbf u}\|^2_{H^1(\Omega)}  + \|w\|^2_{W^{1,4}(\Omega)}+ \|{\mathbf b}\|^2_{H^1(\Omega)})
+ \int_0^T(\|{\nabla^2\mathbf u}\|_{L^4(\Omega)}^2+\|\nabla^2 {\mathbf b}\|_{L^2(\Omega)}^2)\,dt\\
& & +\int_0^T(\|\p_t{\mathbf u}\|^2_{L^2(\Omega)}+ \|\p_t w\|^2_{L^4(\Omega)}+\|\p_t{\mathbf b}\|^2_{L^2(\Omega)})\,dt
\leq C,
\end{eqnarray*}
where the constant $ C $ depends only on $\Omega$, $ T $, $ \|{\mathbf u}_0\|_{H^{2}(\Omega)} $, $ \|{w}_0\|_{W^{1,4}(\Omega)} $, $ \|{\mathbf b}_0\|_{H^{1}(\Omega)} $.
\end{proposition}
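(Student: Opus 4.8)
The plan is to establish the estimates in the natural hierarchy, starting from the basic energy law and bootstrapping to the full $H^1$/$W^{1,4}$ control. First I would derive the zeroth-order energy estimate: testing $\eqref{eq1}_1$ with $\mathbf u$, $\eqref{eq1}_3$ with $\mathbf b$, and $\eqref{eq1}_2$ with $w$, then summing. The magnetic nonlinearities $({\mathbf b}\cdot\nabla){\mathbf b}$ and $({\mathbf b}\cdot\nabla){\mathbf u}$ cancel after integration by parts (using $\nabla\cdot\mathbf b=0$ and the no-slip conditions $\eqref{eq2}$), the transport terms vanish, and the micro-rotational coupling $-\chi\nabla^\perp w$ against $\mathbf u$ pairs with $\chi\nabla^\perp\cdot\mathbf u$ against $w$ to produce terms controlled by Young's inequality against the $2\chi w$ damping. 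This yields
\[
\sup_{0\le t\le T}\big(\|\mathbf u\|_{L^2}^2+\|w\|_{L^2}^2+\|\mathbf b\|_{L^2}^2\big)+\int_0^T\big((\mu+\chi)\|\nabla\mathbf u\|_{L^2}^2+\nu\|\nabla\mathbf b\|_{L^2}^2\big)\,dt\le C.
\]
The price is that $w$ only enjoys $L^\infty_t L^2_x$ at this stage, with no diffusion, so all further $w$-bounds must come from the transport structure of $\eqref{eq1}_2$.

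Next I would obtain the $L^q$ bounds for $w$ by testing $\eqref{eq1}_2$ with $|w|^{q-2}w$ for $q=2,4$: the transport term drops (incompressibility), the damping $2\chi w$ is favorable, and the forcing $\chi\nabla^\perp\cdot\mathbf u$ against $|w|^{q-2}w$ is absorbed using $\|\nabla\mathbf u\|_{L^q}$-type quantities and Young's inequality, giving $\|w\|_{L^\infty_t L^4_x}\le C$ once $\mathbf u$ has enough regularity. Then comes the $H^1$ estimate for $(\mathbf u,\mathbf b)$: test $\eqref{eq1}_1$ with $-\Delta\mathbf u$ (or equivalently with $\partial_t\mathbf u$ plus the Stokes structure) and $\eqref{eq1}_3$ with $-\Delta\mathbf b$. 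Here the key cancellations of the ideal MHD terms $({\mathbf b}\cdot\nabla){\mathbf b}\cdot(-\Delta\mathbf u)$ and $({\mathbf b}\cdot\nabla){\mathbf u}\cdot(-\Delta\mathbf b)$ are exploited, the convection terms are handled by Ladyzhenskaya's inequality in 2D (Corollary \ref{C1}(1)) and Young's inequality, and the troublesome term $\chi\nabla^\perp w\cdot(-\Delta\mathbf u)$ is controlled by $\|w\|_{L^2}\|\nabla^2\mathbf u\|_{L^2}$, absorbed into the diffusion. This produces a differential inequality of Grönwall type for $\|\nabla\mathbf u\|_{L^2}^2+\|\nabla\mathbf b\|_{L^2}^2$ with integrable forcing, yielding the $L^\infty_t H^1_x$ bound together with $\int_0^T(\|\nabla^2\mathbf u\|_{L^2}^2+\|\nabla^2\mathbf b\|_{L^2}^2)\,dt\le C$ via elliptic/Stokes regularity (Lemma \ref{elliptic}, Lemma \ref{stokes}). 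The $L^2_t L^2_x$ bounds on $\partial_t\mathbf u,\partial_t\mathbf b$ follow directly from the equations once $\nabla^2\mathbf u,\nabla^2\mathbf b\in L^2_tL^2_x$.

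The main obstacle — and where the extra regularity $\nabla^2\mathbf u\in L^2_tL^4_x$ and $w\in L^\infty_t W^{1,4}_x$ enters — is closing the $W^{1,4}$ estimate on $w$. Differentiating $\eqref{eq1}_2$ and testing $\nabla w$ with $|\nabla w|^2\nabla w$ generates the commutator term $(\nabla\mathbf u\cdot\nabla w)\cdot|\nabla w|^2\nabla w$, which is borderline: it is roughly $\|\nabla\mathbf u\|_{L^\infty}\|\nabla w\|_{L^4}^4$, and since $w$ has no diffusion there is nothing to absorb it into. The resolution is to first upgrade the velocity estimate: using $\eqref{eq1}_1$ as a Stokes system with right-hand side in $L^2_tL^4_x$ (the convection term is in $L^2_tL^4_x$ by the $H^1\hookrightarrow L^p$ and interpolation bounds already obtained, the magnetic term likewise, and $\nabla^\perp w\in L^\infty_tL^4_x$ provided we already have it — so this must be done with care, possibly via a bootstrap in which the $\|\nabla\mathbf u\|_{L^\infty}$ bound uses only the logarithmic Stokes estimate $\eqref{newstokes1}$ applied to the $\mathbf g$-system, where $\nabla^\perp w$ is replaced by the lower-order field $\mathbf v$). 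Concretely, I expect the argument to control $\int_0^T\|\nabla\mathbf u\|_{L^\infty}\,dt$ — or at least $\int_0^T\|\nabla\mathbf u\|_{L^\infty}\,dt$ in the form needed for a Grönwall argument on $\|\nabla w\|_{L^4}^4$ — by combining $\eqref{keyestimates1}$ (which bounds $\|\nabla\mathbf v\|_{L^\infty}$ by $\|w\|_{L^\infty}\ln(e+\|\nabla w\|_{L^4})$) with the higher integrability of $\nabla^2\mathbf g$, then invoking an Osgood/log-Grönwall inequality so that the double appearance of $\|\nabla w\|_{L^4}$ does not cause finite-time blow-up. Once $\nabla w\in L^\infty_tL^4_x$ is secured, $\nabla^2\mathbf u\in L^2_tL^4_x$ follows from Lemma \ref{stokes} and $\partial_t w\in L^2_tL^4_x$ from $\eqref{eq1}_2$, completing all the claimed bounds; throughout, the constant $C$ depends only on $\Omega$, $T$, and the stated initial norms.
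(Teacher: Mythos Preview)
Your hierarchy and the log-Gr\"onwall closure for $\|\nabla w\|_{L^4}$ are on the right track, but there is a genuine gap at the $H^1$ step for $\mathbf u$. You claim that $\chi\int_\Omega\nabla^\perp w\cdot(-\Delta\mathbf u)\,dx$ is controlled by $\|w\|_{L^2}\|\nabla^2\mathbf u\|_{L^2}$; it is not. A direct Cauchy--Schwarz gives only $\|\nabla w\|_{L^2}\|\Delta\mathbf u\|_{L^2}$, and integration by parts does not help on a bounded domain with no-slip: moving the derivative off $w$ lands on $\Delta\mathbf u$, producing either a third-order term or an uncontrolled boundary contribution (neither $\Delta\mathbf u$ nor the vorticity $\nabla^\perp\cdot\mathbf u$ vanishes on $\partial\Omega$). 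Since $w$ has no diffusion, $\|\nabla w\|_{L^2}$ is unavailable at this stage, so the $H^1$ estimate for $\mathbf u$ cannot be closed directly. The alternative test against $\partial_t\mathbf u$ fails for the same reason: the companion Stokes estimate \eqref{Stokes1} for $\|\nabla^2\mathbf u\|_{L^2}$ still carries $\|\nabla^\perp w\|_{L^2}$ on the right-hand side.

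This is precisely the obstruction highlighted in the introduction, and the paper's cure is to run the \emph{entire} $H^1$ estimate not on $\mathbf u$ but on the shifted field $\mathbf g=\mathbf u-\mathbf v$. In the $\mathbf g$-equation \eqref{mathg} the bad forcing $\nabla^\perp w$ is replaced by $\mathbf Q$, whose dangerous piece is $A^{-1}\nabla^\perp(\mathbf u\cdot\nabla w)$; writing $\mathbf u\cdot\nabla w=\nabla\cdot(\mathbf u w)$ with $\mathbf u w|_{\partial\Omega}=0$ and invoking \eqref{Stokes3} converts this into $\|\mathbf u w\|_{L^2}\lesssim\|\mathbf u\|_{L^4}\|w\|_{L^4}$, which needs no derivative on $w$. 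The paper then closes $\|\nabla\mathbf g\|_{L^2}$, $\|\nabla\mathbf b\|_{L^2}$ and $\|w\|_{L^4}$ \emph{simultaneously} in one Gr\"onwall loop (Lemma~\ref{gH1}), and recovers $\|\mathbf u\|_{H^1}$ a posteriori via \eqref{keyestimates}. You invoke the $\mathbf g$-system only later, for $\|\nabla\mathbf u\|_{L^\infty}$; you need it one level earlier, already for $H^1$. Once this is fixed, your outline of the $L^2_tW^{2,q}_x$ bound on $\mathbf g$ via maximal Stokes regularity and the subsequent log-Gr\"onwall closure for $\|\nabla w\|_{L^4}$ aligns with the paper's Lemmas~\ref{gW2q}--\ref{nablaw-Lp}.
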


The proof of Proposition \ref{uH1} is divided into the following subsections.
~\\
\subsection{$ L^2 $ estimates}
~\\
\label{section3.1}

We start with the basic energy estimates in the following Lemma.
\begin{lemma}\label{uL2}
Let $ \Omega \subset \R^2 $ be a bounded domain with smooth boundary and $ ({\mathbf u}, w, {\mathbf b}) $ be the smooth solution of system \eqref{eq1}--\eqref{eq20}.
If, in addition, $ {\mathbf u}_0 \in L^2(\Omega) $,
$ w_0 \in L^2(\Omega) $, $ {\mathbf b}_0 \in L^2(\Omega) $, then it holds that
\begin{eqnarray*}
  \sup\limits_{0 \leq t \leq T} (\|{\mathbf u}\|^2_{L^2} + \|w\|^2_{L^2} + \|{\mathbf b}\|^2_{L^2}) + (\mu + \chi)\int_0^t\|\nabla{\mathbf u}\|^2_{L^2}dt+ 4\chi\int_0^t\| w \|^2_{L^2}dt + 2 \nu \int_0^t\|\nabla{\mathbf b}\|^2_{L^2}dt \leq C,
\end{eqnarray*}
where the constant $C$ depends only on $\Omega$, $ T $, $ \|{\mathbf u}_0\|_{L^2(\Omega)} $, $ \|{w}_0\|_{L^{2}(\Omega)} $ and $ \|{\mathbf b}_0\|_{L^2(\Omega)} $.
\end{lemma}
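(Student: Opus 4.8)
The plan is to run the standard $L^2$ energy method: take the $L^2(\Omega)$ inner product of $\eqref{eq1}_1$ with $\mathbf u$, of $\eqref{eq1}_2$ with $w$, of $\eqref{eq1}_3$ with $\mathbf b$, and add the three identities. Since $(\mathbf u,w,\mathbf b)$ is smooth and $\mathbf u,\mathbf b$ are divergence free and vanish on $\partial\Omega$, all the transport terms vanish ($\int_\Omega(\mathbf u\cdot\nabla)\mathbf u\cdot\mathbf u=\int_\Omega(\mathbf u\cdot\nabla)w\,w=\int_\Omega(\mathbf u\cdot\nabla)\mathbf b\cdot\mathbf b=0$), the pressure term vanishes ($\int_\Omega\nabla p\cdot\mathbf u=-\int_\Omega p\,\nabla\cdot\mathbf u=0$), the viscous terms produce the dissipations $(\mu+\chi)\|\nabla\mathbf u\|_{L^2}^2$ and $\nu\|\nabla\mathbf b\|_{L^2}^2$, and the zeroth order term in $\eqref{eq1}_2$ gives $2\chi\|w\|_{L^2}^2$; the smoothness of the solution justifies every integration by parts.

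Two groups of coupling terms survive. First, integrating by parts and using $\nabla\cdot\mathbf b=0$ with $\mathbf b|_{\partial\Omega}=0$, the Lorentz-type terms cancel,
\[
\int_\Omega(\mathbf b\cdot\nabla)\mathbf b\cdot\mathbf u\,dx+\int_\Omega(\mathbf b\cdot\nabla)\mathbf u\cdot\mathbf b\,dx=\int_\Omega\mathbf b\cdot\nabla(\mathbf b\cdot\mathbf u)\,dx=0 .
\]
Second, integrating by parts with $\mathbf u|_{\partial\Omega}=0$, the micro-rotational term in $\eqref{eq1}_1$ contributes $-\chi\int_\Omega\nabla^{\perp}w\cdot\mathbf u\,dx=\chi\int_\Omega w\,(\nabla^{\perp}\cdot\mathbf u)\,dx$, which carries the \emph{same} sign as the contribution $\chi\int_\Omega(\nabla^{\perp}\cdot\mathbf u)\,w\,dx$ coming from the source in $\eqref{eq1}_2$; these do not cancel but add up. Collecting everything yields the energy identity
\[
\tfrac12\tfrac{d}{dt}\big(\|\mathbf u\|_{L^2}^2+\|w\|_{L^2}^2+\|\mathbf b\|_{L^2}^2\big)+(\mu+\chi)\|\nabla\mathbf u\|_{L^2}^2+2\chi\|w\|_{L^2}^2+\nu\|\nabla\mathbf b\|_{L^2}^2=2\chi\int_\Omega w\,(\nabla^{\perp}\cdot\mathbf u)\,dx .
\]

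To close it, multiply by $2$ and bound the surviving cross term by Cauchy--Schwarz and $\|\nabla^{\perp}\cdot\mathbf u\|_{L^2}\le\|\nabla\mathbf u\|_{L^2}$, then Young's inequality (splitting so that the $\|\nabla\mathbf u\|_{L^2}^2$ coefficient is exactly $\mu+\chi$):
\[
4\chi\int_\Omega w\,(\nabla^{\perp}\cdot\mathbf u)\,dx\le 4\chi\|w\|_{L^2}\|\nabla\mathbf u\|_{L^2}\le(\mu+\chi)\|\nabla\mathbf u\|_{L^2}^2+\tfrac{4\chi^2}{\mu+\chi}\|w\|_{L^2}^2 .
\]
Absorbing the gradient term on the left leaves
\[
\tfrac{d}{dt}\big(\|\mathbf u\|_{L^2}^2+\|w\|_{L^2}^2+\|\mathbf b\|_{L^2}^2\big)+(\mu+\chi)\|\nabla\mathbf u\|_{L^2}^2+4\chi\|w\|_{L^2}^2+2\nu\|\nabla\mathbf b\|_{L^2}^2\le\tfrac{4\chi^2}{\mu+\chi}\|w\|_{L^2}^2 .
\]
Dropping the nonnegative dissipation on the left and applying Grönwall's inequality controls $\|\mathbf u\|_{L^2}^2+\|w\|_{L^2}^2+\|\mathbf b\|_{L^2}^2$ on $[0,T]$ in terms of $T$ and the initial data; reinserting this bound into the time-integrated inequality then produces the stated estimate with $C=C(\Omega,T,\|\mathbf u_0\|_{L^2},\|w_0\|_{L^2},\|\mathbf b_0\|_{L^2})$. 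The only point that is not completely routine is the micro-rotational coupling term $2\chi\int_\Omega w\,(\nabla^{\perp}\cdot\mathbf u)\,dx$: unlike the Lorentz terms it does not cancel, so it must be absorbed by Young's inequality at the cost of a Grönwall argument, which is precisely why the constant here depends (exponentially) on $T$ rather than being a pure energy bound.
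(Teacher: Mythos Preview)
Your proof is correct and follows essentially the same approach as the paper: both take the $L^2$ inner products with $(\mathbf u,w,\mathbf b)$, use the standard cancellations for the transport, pressure, and Lorentz terms, rewrite the micro-rotational coupling as $2\chi\int_\Omega(\nabla^{\perp}\cdot\mathbf u)\,w\,dx$ via integration by parts, absorb it by Young's inequality into the dissipation, and finish with Gr\"onwall. The only cosmetic difference is that you multiply by $2$ before applying Young and track the explicit constant $\tfrac{4\chi^2}{\mu+\chi}$, whereas the paper absorbs with $\tfrac{\mu+\chi}{2}\|\nabla\mathbf u\|_{L^2}^2+C\|w\|_{L^2}^2$ first and doubles afterwards.
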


\begin{proof}
We take the $ L^2 $-inner product of the system $\eqref{eq1}$ with $ {\mathbf u}, w, {\mathbf b} $ respectively to obtain
\begin{eqnarray}\label{L}
&& \f12 \f{d}{dt} (\|\mathbf u\|_{L^2(\Omega)}^2 + \|w\|_{L^2(\Omega)}^2 + \|\mathbf b\|_{L^2(\Omega)}^2) +(\mu + \chi)\|\nabla {\mathbf u}\|_{L^2(\Omega)}^2 + 2 \chi \|w\|_{L^2(\Omega)}^2+\nu \|\nabla {\mathbf b}\|_{L^2(\Omega)}^2   \nonumber\\
&=& - \chi \int_{\Omega}\nabla^{\perp}w\cdot {\mathbf u}\, dx
   + \chi \int_{\Omega}\nabla^{\perp}\cdot {\mathbf u}w\, dx,
\end{eqnarray}
where we have used the equalities
$$\int_{\Omega} \mathbf u \cdot \nabla\mathbf u \cdot \mathbf u \,dx =
\int_{\Omega} \mathbf u \cdot \nabla\mathbf b \cdot \mathbf b \,dx = 0,\,\,\,\int_{\Omega} \mathbf b \cdot \nabla\mathbf b \cdot \mathbf u \,dx +
 \int_{\Omega} \mathbf b \cdot \nabla\mathbf u \cdot \mathbf b \,dx = 0.$$

Note that
$ - \nabla^{\perp}w \cdot {\mathbf u} = u_1 \partial_{2} w - u_2 \partial_{1} w
= \partial_{2} (u_1\, w) - \partial_{1} (u_2\,w) + \, (\nabla^{\perp}\cdot {\mathbf u}) w $.
Then by using integration by parts and applying the boundary condition
$ {\mathbf u}|_{\p \Omega} = 0 $, for $\mathbf{n}^\perp=(-n_2,n_1)$, we have,
\ben
&& - \chi \int_{\Omega}\nabla^{\perp}w \cdot {\mathbf u}\,dx
   + \chi \int_{\Omega}\nabla^{\perp} \cdot {\mathbf u} w\,dx
= 2 \chi \int_{\Omega}\nabla^{\perp} \cdot {\mathbf u} w\,dx
   - \chi \int_{\p \Omega}{\mathbf u} \cdot{{\bm n}^\perp}w\,ds  \notag\\
&=& 2 \chi \int_{\Omega}\nabla^{\perp} \cdot {\mathbf u} w\,dx\leq \f{(\mu+\chi)}2 \|\nabla {\mathbf u}\|_{L^2(\Omega)}^2
    + C \|w\|_{L^2(\Omega)}^2,\label{bbb}
\een
which, together with (\ref{L}) and Gr\"{o}nwall's inequality, gives
\begin{eqnarray*}
\sup\limits_{0 \leq t \leq T} (\|{\mathbf u}\|^2_{L^2} + \|w\|^2_{L^2} + \|{\mathbf b}\|^2_{L^2}) + (\mu + \chi)\int_0^t\|\nabla{\mathbf u}\|^2_{L^2}dt
+ 4\chi\int_0^t\| w\|^2_{L^2}dt + 2 \nu \int_0^t\|\nabla{\mathbf b}\|^2_{L^2}dt
 \leq C,
\end{eqnarray*}
where the constant $ C $ depends only on $\Omega$, $ T $, $ \|{\mathbf u}_0\|_{L^2(\Omega)} $, $ \|{w}_0\|_{L^{2}(\Omega)} $ and $ \|{\mathbf b}_0\|_{L^2(\Omega)} $.
This completes the proof of Lemma \ref{uL2}.
\end{proof}
\vskip 0mm
~\\
\subsection{$ H^1 $ estimates}
~\\
\label{section3.2}

In this subsection, we establish the $ H^1 $ estimates of $ \mathbf u $ and $ \mathbf b $. By recalling $ {\mathbf g} = {\mathbf u} - {\mathbf v} $ and \eqref{keyestimates}, we have
\begin{equation*}
\|\mathbf v\|_{W^{1,q}(\Omega)}\leq C\|w\|_{L^{q}(\Omega)},
  \quad {\rm for~~any}~~ q \in (1,\infty),
\end{equation*}
which implies, after applying Lemma \ref{uL2}, that
\begin{eqnarray}\label{uh1}
  &&\|\mathbf u\|_{L^\infty(0,T;H^1(\Omega))}
\leq \|\mathbf g\|_{L^\infty(0,T;H^1(\Omega))}
     + \|\mathbf v\|_{L^\infty(0,T;H^1(\Omega))}   \nonumber\\
&\leq& C(\|\mathbf g\|_{L^\infty(0,T;H^1(\Omega))}
     + \| w\|_{L^\infty(0,T;L^2(\Omega))})
\leq C(\|\mathbf g\|_{L^\infty(0,T;H^1(\Omega))}
     + 1).
\end{eqnarray}
Therefore, to get the $ H^1$ estimates of $ {\mathbf u} $  and $ \mathbf b $, it suffices to establish the $ H^1$ estimates of $ {\mathbf g} $  and $ \mathbf b $ as below.

\begin{lemma}\label{gH1}
Under the assumptions of  Lemma \ref{uL2}, we further assume
$ \mathbf {u}_0 \in H^1(\Omega) $, $ \mathbf {b}_0 \in H^1(\Omega) $ and
$ w_0 \in L^4(\Omega) $, then it holds that
\begin{eqnarray*}
&&\sup\limits_{0 \leq t \leq T}(\|\nabla{\mathbf g}\|^2_{L^2(\Omega)} + \|\nabla{\mathbf b}\|^2_{L^2(\Omega)} + \|w\|^2_{L^4(\Omega)}) +
\int_0^t(\|\p_t{\mathbf g}\|^2_{L^2(\Omega)}
+ \|\p_t{\mathbf b}\|^2_{L^2(\Omega)}) \, dt  \\
&& + \int_0^t(\|\nabla^2{\mathbf g}\|^2_{L^2(\Omega)}+\|\nabla^2{\mathbf b}\|^2_{L^2(\Omega)} +  \|w\|^2_{L^4(\Omega)}) \, dt \leq C,
\end{eqnarray*}
where the constant $C$ depends only on $\Omega$, $ T $, $ \|{\mathbf u}_0\|_{H^1(\Omega)} $, $ \|{\mathbf b}_0\|_{H^1(\Omega)} $ and $ \|{w}_0\|_{L^{4}(\Omega)} $.
\end{lemma}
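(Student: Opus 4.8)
The plan is to propagate $H^1$ regularity on $\mathbf g$ and on $\mathbf b$ together with an $L^4$ bound on $w$, all simultaneously through a single coupled Gr\"onwall argument, and then transfer everything to $\mathbf u$ via \eqref{uh1}. Throughout I would keep the decomposition $\mathbf u=\mathbf g+\mathbf v$ and the Stokes estimate \eqref{keyestimates}, which gives $\|\nabla\mathbf v\|_{L^p(\Omega)}\le C\|w\|_{L^p(\Omega)}$ for $p=2,4$, so that $\|\nabla\mathbf u\|_{L^p}\le\|\nabla\mathbf g\|_{L^p}+C\|w\|_{L^p}$; note in particular that Lemma \ref{uL2} then yields $\int_0^T\|\nabla\mathbf g\|_{L^2}^2\,dt\le C$, an integrability fact that will be used to keep the Gr\"onwall coefficient in $L^1$.

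First I would derive the $L^4$ bound on $w$: multiplying $\eqref{eq1}_2$ by $|w|^2w$ and integrating, the convective term vanishes thanks to $\nabla\cdot\mathbf u=0$ and $\mathbf u|_{\p\Omega}=0$, leaving $\frac{1}{4}\frac{d}{dt}\|w\|_{L^4}^4+2\chi\|w\|_{L^4}^4\le\chi\|\nabla\mathbf u\|_{L^4}\|w\|_{L^4}^3$; bounding $\|\nabla\mathbf u\|_{L^4}\le\|\nabla\mathbf g\|_{L^4}+C\|w\|_{L^4}$ and using Corollary \ref{C1}(2) to write $\|\nabla\mathbf g\|_{L^4}^2\le C\|\nabla\mathbf g\|_{L^2}\|\nabla^2\mathbf g\|_{L^2}+C\|\nabla\mathbf g\|_{L^2}^2$, this produces, after dividing by $\|w\|_{L^4}^2$, an inequality of the form $\frac{d}{dt}\|w\|_{L^4}^2+c\|w\|_{L^4}^2\le C\|w\|_{L^4}^2+\varepsilon\|\nabla^2\mathbf g\|_{L^2}^2+C\|\nabla\mathbf g\|_{L^2}^2$. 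Next, for the $H^1$ bound on $\mathbf g$ I would test \eqref{mathg} against $\p_t\mathbf g$: the pressure term drops because $\nabla\cdot\mathbf g=0$ and $\mathbf g|_{\p\Omega}=0$ (hence $\p_t\mathbf g\cdot\mathbf n|_{\p\Omega}=0$), giving $\|\p_t\mathbf g\|_{L^2}^2+(\mu+\chi)\frac{d}{dt}\|\nabla\mathbf g\|_{L^2}^2\le C\|\mathbf Q\|_{L^2}^2$. The decisive point in estimating $\|\mathbf Q\|_{L^2}$ is that the two operator terms carry no derivative of $w$: writing $\mathbf u\cdot\nabla w=\nabla\cdot(\mathbf u w)$ and $\nabla^\perp(\nabla^\perp\cdot\mathbf u)=\nabla\cdot(\nabla\mathbf u)$ and invoking the Stokes bound \eqref{Stokes3} (using $\mathbf u|_{\p\Omega}=0$), one gets $\|A^{-1}\nabla^\perp(\mathbf u\cdot\nabla w)\|_{L^2}\le C\|\mathbf u\|_{L^4}\|w\|_{L^4}$ and $\|A^{-1}\nabla^\perp(\nabla^\perp\cdot\mathbf u)\|_{L^2}\le C\|\mathbf u\|_{L^2}$, while $\|2\chi\mathbf v\|_{L^2}\le C\|w\|_{L^2}$; the remaining convective terms are $\le\|\mathbf u\|_{L^4}\|\nabla\mathbf u\|_{L^4}+\|\mathbf b\|_{L^4}\|\nabla\mathbf b\|_{L^4}$. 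Then, reading \eqref{mathg} as a stationary Stokes system with source $\mathbf Q-\p_t\mathbf g$ and $\eqref{eq1}_3$ as an elliptic problem for $\mathbf b$, Lemmas \ref{stokes} and \ref{elliptic} give $\|\nabla^2\mathbf g\|_{L^2}\le C(\|\p_t\mathbf g\|_{L^2}+\|\mathbf Q\|_{L^2})$ and $\|\nabla^2\mathbf b\|_{L^2}\le C(\|\p_t\mathbf b\|_{L^2}+\|\mathbf u\cdot\nabla\mathbf b\|_{L^2}+\|\mathbf b\cdot\nabla\mathbf u\|_{L^2})$. Finally the $H^1$ bound on $\mathbf b$ comes from testing $\eqref{eq1}_3$ against $\p_t\mathbf b$: $\|\p_t\mathbf b\|_{L^2}^2+\frac{\nu}{2}\frac{d}{dt}\|\nabla\mathbf b\|_{L^2}^2=\int_\Omega(\mathbf b\cdot\nabla\mathbf u-\mathbf u\cdot\nabla\mathbf b)\cdot\p_t\mathbf b\,dx$, estimated in the same fashion.

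To close, I would add the three differential inequalities together with a small multiple of each second-order estimate, set $\mathcal E(t)=\|\nabla\mathbf g\|_{L^2}^2+\|\nabla\mathbf b\|_{L^2}^2+\|w\|_{L^4}^2$, and use Corollary \ref{C1}(2) once more so that every convective contribution is expressed through $\|\nabla^2\mathbf g\|_{L^2}$, $\|\nabla^2\mathbf b\|_{L^2}$, $\|\p_t\mathbf g\|_{L^2}$, $\|\p_t\mathbf b\|_{L^2}$ (always with exponent $<2$, hence absorbable by Young's inequality into the left-hand side) plus $L^2$-level quantities controlled by Lemma \ref{uL2}. The outcome is an inequality $\frac{d}{dt}\mathcal E+c\big(\|\p_t\mathbf g\|_{L^2}^2+\|\p_t\mathbf b\|_{L^2}^2+\|\nabla^2\mathbf g\|_{L^2}^2+\|\nabla^2\mathbf b\|_{L^2}^2+\|w\|_{L^4}^2\big)\le G(t)\,(1+\mathcal E)$ with $G\in L^1(0,T)$ (built from $\|\nabla\mathbf u\|_{L^2}^2$, $\|\nabla\mathbf b\|_{L^2}^2$, $\|w\|_{L^2}^2$ and $\|\nabla\mathbf g\|_{L^2}^2$, all in $L^1(0,T)$ by Lemma \ref{uL2}); Gr\"onwall then bounds $\sup_{[0,T]}\mathcal E$, and integrating back over $[0,T]$ gives the remaining space-time norms, while \eqref{uh1} transfers everything to $\mathbf u$.

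The main obstacle is exactly this closure. The convection $\mathbf u\cdot\nabla\mathbf u$ and the magnetic nonlinearities $\mathbf b\cdot\nabla\mathbf b$ and $\mathbf b\cdot\nabla\mathbf u$ all require $L^4$ control of first derivatives — half a derivative more than the $H^1$ regularity being propagated — which unavoidably couples the $H^1$ estimate to the second-order quantities $\|\nabla^2\mathbf g\|_{L^2}$ and $\|\nabla^2\mathbf b\|_{L^2}$; these are available only through the elliptic/Stokes regularity and then feed back into $\mathbf Q$ and into the $\mathbf b$-nonlinearity. Making this loop close requires (i) that the 2D Gagliardo--Nirenberg inequalities deliver those terms with subcritical powers, so that Young's inequality can absorb them, and (ii) peeling one factor off every genuinely quadratic-in-$\mathcal E$ term by means of $\int_0^T\|\nabla\mathbf g\|_{L^2}^2\,dt<\infty$, so the resulting Gr\"onwall inequality is linear in $\mathcal E$ with an $L^1$ coefficient and hence global rather than merely local in time. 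A preliminary but essential point is that $\mathbf Q$ contains $w$ but not $\nabla w$, which is precisely the reason the auxiliary field $\mathbf v$, and thus $\mathbf g$, was introduced.
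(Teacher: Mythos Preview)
Your proposal is correct and follows essentially the same approach as the paper: test \eqref{mathg} and $\eqref{eq1}_3$ against $\p_t\mathbf g$ and $\p_t\mathbf b$, couple this with the stationary Stokes/elliptic estimates for $\|\nabla^2\mathbf g\|_{L^2}$ and $\|\nabla^2\mathbf b\|_{L^2}$, add the $L^4$ estimate on $w$ obtained from multiplying $\eqref{eq1}_2$ by $|w|^2w$, and close by Gr\"onwall with an $L^1$-in-time coefficient built from the basic energy quantities of Lemma~\ref{uL2}. One small slip: the interpolation you invoke for $\|\nabla\mathbf g\|_{L^4}^2\le C\|\nabla\mathbf g\|_{L^2}\|\nabla^2\mathbf g\|_{L^2}+C\|\nabla\mathbf g\|_{L^2}^2$ is Corollary~\ref{C1}(1) applied to $\nabla\mathbf g$, not item~(2).
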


\begin{proof}
$\bf Step ~1 $.
First, taking the inner product of $ \eqref{mathg}_1 $ with $ \p_t \mathbf g $,
applying H\"{o}lder's inequality and Young's inequality, we obtain
\ben
&&\f{\mu + \chi}2 \f{d}{dt} \|\nabla\mathbf g\|_{L^2(\Omega)}^2
  + \|\p_t {\mathbf g}\|_{L^2(\Omega)}^2= \int_{\Omega}{\mathbf Q} \cdot \p_t{\mathbf g}\,dx
\leq \f12\|\p_t {\mathbf g}\|_{L^2(\Omega)}^{2} + \f12\|\mathbf Q\|_{L^2(\Omega)}^2.
\label{A100}
\een
By Lemma \ref{stokes}, we have
\begin{equation}\label{stokes g}
(\mu + \chi)\|\nabla^2{\mathbf g}\|_{L^2(\Omega)}^{2}
\leq C_1\|\p_t{\mathbf g}\|_{L^2(\Omega)}^{2} + C\|{\mathbf Q}\|_{L^2(\Omega)}^{2}.
\end{equation}
Multiplying (\ref{A100}) by $ 4 C_1 $, and summing the resultant with (\ref{stokes g}), one gets
\begin{equation}\label{A102}
2C_1(\mu + \chi)\f{d}{dt}\|\nabla{\mathbf g}\|_{L^2(\Omega)}^{2}
 + C_1\|\p_t{\mathbf g}\|_{L^2(\Omega)}^{2} +  (\mu + \chi)\|\nabla^2{\mathbf g}\|_{L^2(\Omega)}^{2}
 \leq C \|{\mathbf Q}\|_{L^2(\Omega)}^{2},
\end{equation}
with
\ben
\|\mathbf Q\|_{L^2(\Omega)}^2
&\leq& \|{\mathbf b}\cdot\nabla{\mathbf b}\|_{L^2(\Omega)}^2
      + \|{\mathbf u}\cdot\nabla{\mathbf u}\|_{L^2(\Omega)}^2
      + C\|A^{-1}\nabla^{\perp}({\mathbf u}\cdot\nabla{ w})\|_{L^2(\Omega)}^2\notag\\
&& +C\|A^{-1}\nabla^{\perp}(\nabla^{\perp}\cdot {\mathbf u})\|_{L^2(\Omega)}^2
   +C\|{\mathbf v}\|_{L^2(\Omega)}^2= \sum\limits_{i=1}^5 I_i.  \notag
\een

Subsequently, we will estimate the five terms one by one. By making use of H\"{o}lder inequality, Corollary \ref{C1}, Young's inequality and \eqref{keyestimates}, it follows that
\begin{eqnarray}
I_1
&\leq& \|{\mathbf b}\|^2_{L^4(\Omega)}
        \|\nabla{\mathbf b}\|^2_{L^4(\Omega)}\nonumber\\
&\leq& C \|{\mathbf b}\|_{L^2(\Omega)}
      \|\nabla{\mathbf b}\|_{L^2(\Omega)}(\|\nabla{\mathbf b}\|_{L^2(\Omega)}^2
       + \|\nabla{\mathbf b}\|_{L^2(\Omega)} \|\nabla^2{\mathbf b}\|_{L^2(\Omega)}) \nonumber\\
&\leq& \f{(\mu+\chi)}4\|\nabla^2{\mathbf b}\|_{L^2(\Omega)}^2 + C\|{\mathbf b}\|_{L^2(\Omega)}^2\|\nabla{\mathbf b}\|_{L^2(\Omega)}^4 + C\|{\mathbf b}\|_{L^2}\|\nabla{\mathbf b}\|_{L^2}\|\nabla{\mathbf b}\|_{L^2(\Omega)}^2, \label{A103}\\
I_2
&\leq& \|{\mathbf u}\cdot\nabla{\mathbf g}\|_{L^2(\Omega)}^2 +
       \|{\mathbf u}\cdot\nabla{\mathbf v}\|_{L^2(\Omega)}^2\notag\\
&\leq& \|{\mathbf u}\|_{L^4(\Omega)}^2\|\nabla{\mathbf g}\|_{L^4(\Omega)}^2 +
   \|{\mathbf u}\|_{L^4(\Omega)}^2\|\nabla{\mathbf v}\|_{L^4(\Omega)}^2\notag\\
&\leq& C\|{\mathbf u}\|_{L^2(\Omega)}\|\nabla{\mathbf u}\|_{L^2(\Omega)}\|\nabla{\mathbf g}\|_{L^2(\Omega)}^2+C\|{\mathbf u}\|_{L^2(\Omega)}\|\nabla{\mathbf u}\|_{L^2(\Omega)}\|\nabla{\mathbf g}\|_{L^2(\Omega)}\|\nabla^2{\mathbf g}\|_{L^2(\Omega)}\notag\\
&&+C\|{\mathbf u}\|_{L^2(\Omega)}\|\nabla{\mathbf u}\|_{L^2(\Omega)}\|w\|_{L^4(\Omega)}^2\notag\\
&\leq&\f{(\mu+\chi)}8\|\nabla^2 {\mathbf g}\|_{L^2(\Omega)}^{2}+C(\|{\mathbf u}\|_{L^2(\Omega)}\|\nabla{\mathbf u}\|_{L^2(\Omega)}+\|{\mathbf u}\|_{L^2(\Omega)}^2\|\nabla{\mathbf u}\|_{L^2(\Omega)}^2)\|\nabla{\mathbf g}\|_{L^2(\Omega)}^2\notag\\
&&+C\|{\mathbf u}\|_{L^2(\Omega)}\|\nabla{\mathbf u}\|_{L^2(\Omega)}\|w\|_{L^4(\Omega)}^2.
\label{A104}
\end{eqnarray}

It is clear that $ {\mathbf u}\cdot\nabla w = \nabla\cdot({\mathbf u}w) $ and $ {\mathbf u}w|_{\p\Omega} = 0 $, due to $ \nabla\cdot{\mathbf u} = 0 $ and $ {\mathbf u}|_{\p\Omega} = 0 $.
Therefore, by using Lemma \ref{stokes}, \eqref{keyestimates}, H\"{o}lder's inequality and Corollary \ref{C1},
we obtain
\ben
I_3 + I_4 + I_5
&\leq& C\|A^{-1}\nabla^{\perp}\nabla\cdot({\mathbf u}{w})\|_{L^2(\Omega)}^2+C\|A^{-1}\nabla^{\perp}(\nabla^{\perp}\cdot {\mathbf u})\|_{L^2(\Omega)}^2+C\|{\mathbf v}\|_{L^2(\Omega)}^2\notag\\
&\leq& C\|{\mathbf u}{w}\|_{L^2(\Omega)}^2+C\|{\mathbf u}\|_{L^2(\Omega)}^2+C\|w\|_{L^2(\Omega)}^2\notag\\
&\leq& C\|{\mathbf u}\|_{L^4(\Omega)}^2\|w\|_{L^4(\Omega)}^2+C\|{\mathbf u}\|_{L^2(\Omega)}^2+C\|w\|_{L^2(\Omega)}^2\notag\\
&\leq& C\|{\mathbf u}\|_{L^2(\Omega)}\|\nabla{\mathbf u}\|_{L^2(\Omega)}\|w\|_{L^4(\Omega)}^2+C(\|{\mathbf u}\|_{L^2(\Omega)}^2+\|w\|_{L^2(\Omega)}^2).
\label{A105}
\een
Summing up the estimates from \eqref{A103} to \eqref{A105}, one has
\begin{eqnarray}\label{A105.1}
\|\mathbf Q\|_{L^2(\Omega)}^2
&\leq& \f{(\mu+\chi)}8\|\nabla^2 {\mathbf g}\|_{L^2(\Omega)}^{2}+\f{(\mu + \chi)}4\|\nabla^2{\mathbf b}\|_{L^2}^2
+ C(1 + \|{\mathbf u}\|_{L^2(\Omega)}^2\|\nabla{\mathbf u}\|_{L^2(\Omega)}^2+ \|{\mathbf b}\|_{L^2(\Omega)}^2\|\nabla{\mathbf b}\|_{L^2(\Omega)}^2)\times  \nonumber \\
&& (\|\nabla{\mathbf g}\|_{L^2(\Omega)}^2 + \|\nabla{\mathbf b}\|_{L^2(\Omega)}^2 + \|w\|_{L^4}^2)  + C(\|{\mathbf u}\|_{L^2(\Omega)}^2 + \|w\|_{L^2}^2).
\end{eqnarray}
Then substituting \eqref{A105.1} into \eqref{A102} yields
\begin{eqnarray}\label{A106}
& &2C_1(\mu + \chi)\f{d}{dt}\|\nabla{\mathbf g}\|_{L^2(\Omega)}^{2}
 + C_1\|\p_t{\mathbf g}\|_{L^2(\Omega)}^{2} +  \f{7(\mu + \chi)}8\|\nabla^2{\mathbf g}\|_{L^2(\Omega)}^{2}  \notag\\
&\leq& \f{(\mu + \chi)}4\|\nabla^2{\mathbf b}\|_{L^2}^2
+ C(1 + \|{\mathbf u}\|_{L^2(\Omega)}^2\|\nabla{\mathbf u}\|_{L^2(\Omega)}^2
+ \|{\mathbf b}\|_{L^2(\Omega)}^2\|\nabla{\mathbf b}\|_{L^2(\Omega)}^2)\times \notag\\ &&(\|\nabla{\mathbf g}\|_{L^2(\Omega)}^2 + \|\nabla{\mathbf b}\|_{L^2(\Omega)}^2 + \|w\|_{L^4}^2) + C(\|{\mathbf u}\|_{L^2(\Omega)}^2 + \|w\|_{L^2}^2).
\end{eqnarray}

$ \bf Step ~2 $.
Repeating the above procedures for $ {\mathbf b} $,
we multiply $ \eqref{eq1}_3 $ by $ \p_t \mathbf b $ and by Lemma \ref{stokes} to obtain
\begin{eqnarray}
\label{A100b}
\f{\nu}2 \f{d}{dt} \|\nabla\mathbf b\|_{L^2(\Omega)}^2
     + \|\p_t {\mathbf b}\|_{L^2(\Omega)}^2
\leq \f12 \|\p_t {\mathbf b}\|_{L^2}^2
       + C \|\mathbf u \cdot \nabla\mathbf b\|_{L^2}^2
        + C \|\mathbf b \cdot \nabla\mathbf u\|_{L^2}^2,
\label{stokes b}
\end{eqnarray}
and
\begin{eqnarray}\label{stokes b1}
\nu \|\nabla^2{\mathbf b}\|_{L^2(\Omega)}^{2}
\leq C_1 \|\p_t{\mathbf b}\|_{L^2(\Omega)}^{2}
      + C \|\mathbf u \cdot \nabla\mathbf b\|_{L^2(\Omega)}^2
      + C \|\mathbf b \cdot \nabla\mathbf u\|_{L^2(\Omega)}^2.
\end{eqnarray}
Then by multiplying (\ref{A100b}) with $ 4 C_1 $ and summing the resultant with (\ref{stokes b1}), using H\"{o}lder's inequality, Corollary \ref{C1}, \eqref{keyestimates} and Young's inequality, it follows that
\begin{eqnarray} \label{A106.1}
 && 2C_1\nu\f{d}{dt}\|\nabla{\mathbf b}\|_{L^2(\Omega)}^2 + C_1\|\p_t{\mathbf b}\|_{L^2(\Omega)}^2 + \nu \|\nabla^2{\mathbf b}\|_{L^2(\Omega)}^2 \nonumber\\
 &\leq& C\|{\mathbf u}\cdot\nabla{\mathbf b}\|_{L^2(\Omega)}^2
    + C\|{\mathbf b}\cdot\nabla{\mathbf u}\|_{L^2(\Omega)}^2\nonumber\\
 &\leq& C \|{\mathbf u}\|_{L^4(\Omega)}^2 \|\nabla{\mathbf b}\|_{L^4(\Omega)}^2
    + C \|{\mathbf b}\|_{L^4(\Omega)}^2 \|\nabla{\mathbf u}\|_{L^4(\Omega)}^2\nonumber\\
 &\leq& C \|{\mathbf u}\|_{L^2(\Omega)}
      \|\nabla{\mathbf u}\|_{L^2(\Omega)}(\|\nabla{\mathbf b}\|_{L^2(\Omega)}^2
       + \|\nabla{\mathbf b}\|_{L^2(\Omega)} \|\nabla^2{\mathbf b}\|_{L^2(\Omega)}) \nonumber\\
& & + C \|{\mathbf b}\|_{L^2(\Omega)}
      \|\nabla{\mathbf b}\|_{L^2(\Omega)}(\|\nabla{\mathbf g}\|_{L^2(\Omega)}^2
       + \|\nabla{\mathbf g}\|_{L^2(\Omega)} \|\nabla^2{\mathbf g}\|_{L^2(\Omega)}+\|w\|_{L^4(\Omega)}^2) \nonumber\\
&\leq& \f{\nu}4 \|\nabla^2{\mathbf b}\|^2_{L^2(\Omega)} + \f{\nu}8\|\nabla^2{\mathbf g}\|_{L^2(\Omega)}^2
+ C\|{\mathbf u}\|_{L^2(\Omega)}^2 \|\nabla{\mathbf u}\|_{L^2(\Omega)}^2 \|\nabla{\mathbf b}\|_{L^2}^2 \nonumber\\
&&+ C\|{\mathbf u}\|_{L^2(\Omega)}\|\nabla{\mathbf u}\|_{L^2(\Omega)}\|\nabla{\mathbf b}\|_{L^2}^2 + C\|{\mathbf b}\|^2_{L^2(\Omega)} \|\nabla{\mathbf b}\|^2_{L^2(\Omega)} \|\nabla{\mathbf g}\|_{L^2}^2 \nonumber\\
&&+ C\|{\mathbf b}\|_{L^2(\Omega)} \|\nabla{\mathbf b}\|_{L^2(\Omega)} \|\nabla{\mathbf g}\|_{L^2}^2 + C\|{\mathbf b}\|_{L^2(\Omega)}\|\nabla{\mathbf b}\|_{L^2(\Omega)}\|w\|_{L^4}^2,\nonumber\\
&\leq& \f{\nu}4\|\nabla^2{\mathbf b}\|_{L^2}^2 + \f{\nu}8\|\nabla^2 {\mathbf g}\|_{L^2(\Omega)}^{2}
+ C(1 + \|{\mathbf u}\|_{L^2(\Omega)}^2\|\nabla{\mathbf u}\|_{L^2(\Omega)}^2+ \|{\mathbf b}\|_{L^2(\Omega)}^2\|\nabla{\mathbf b}\|_{L^2(\Omega)}^2)\times \nonumber \\
&& (\|\nabla{\mathbf g}\|_{L^2(\Omega)}^2 + \|\nabla{\mathbf b}\|_{L^2(\Omega)}^2 + \|w\|_{L^4}^2).
\end{eqnarray}

Multiplying \eqref{A106} by $ \nu $, \eqref{A106.1} by $ \mu + \chi $ and then adding them up, we obtain
\begin{eqnarray}
&&2C_1(\mu + \chi)\nu \f{d}{dt}(\|\nabla \mathbf g\|^2_{L^2(\Omega)}
   + \|\nabla \mathbf b\|^2_{L^2(\Omega)})
   + \f{3(\mu + \chi)\nu}4 \|\nabla^2 \mathbf g\|^2_{L^2(\Omega)}+ \f{(\mu + \chi)\nu}2 \|\nabla^2 \mathbf b\|^2_{L^2(\Omega)}\nonumber\\
  && + C_1 \nu \|\p_t{\mathbf g}\|_{L^2}^2
     + C_1(\mu + \chi)\|\p_t{\mathbf b}\|_{L^2}^2\nonumber\\
&\leq& C(1+\|{\mathbf u}\|_{L^2}^2 \|\nabla{\mathbf u}\|_{L^2}^2
    + \|{\mathbf b}\|_{L^2}^2\|\nabla{\mathbf b}\|_{L^2}^2)
   (\|\nabla{\mathbf g}\|_{L^2}^2 + \|\nabla{\mathbf b}\|_{L^2}^2 + \|w\|_{L^4}^2)+ C(\|{\mathbf u}\|_{L^2}^2+\|w\|_{L^2}^2).
\label{A100ub}
\end{eqnarray}

$ \bf Step ~3 $.
It is clear that \eqref{A100ub} is not a closed estimate since the bound of
$ \|w\|_{L^4(\Omega)} $ is unknown.
However, we found that the estimate of $ \|w\|_{L^4(\Omega)} $ can be bounded in turn by $ \|\nabla\mathbf g\|_{L^2(\Omega)} $ and $ \|\nabla^2\mathbf g\|_{L^2(\Omega)} $ because the equation of $ w $ in $\eqref{eq1}_2$ is not related to the magnetic field $ \mathbf b $.
Keeping this in mind, by multiplying $\eqref{eq1}_2$ with $|w|^2w$ and integrating on $\Omega$, we obtain
\ben
&&\f14\f{d}{dt}\|w\|_{L^4(\Omega)}^4 + 2\chi \|w\|_{L^4(\Omega)}^4=\chi \int_{\Omega}\nabla^{\perp}\cdot {\mathbf u}{|w|^2 w}\,dx \notag\\
&\leq& C\|\nabla{\mathbf u}\|_{L^4(\Omega)}\|w\|_{L^4(\Omega)}^3 \leq C(\|\nabla{\mathbf g}\|_{L^4(\Omega)}+\|\nabla{\mathbf v}\|_{L^4(\Omega)})\|w\|_{L^4(\Omega)}^3\notag\\
&\leq& C(\|\nabla{\mathbf g}\|_{L^2(\Omega)}+\|\nabla{\mathbf g}\|_{L^2(\Omega)}^{\f12}\|\nabla^2{\mathbf g}\|_{L^2(\Omega)}^{\f12}+\|w\|_{L^4(\Omega)})\|w\|_{L^4(\Omega)}^3. \notag
\een
Then by dividing $ \|w\|_{L^4(\Omega)}^2 $ on both sides and using Young's inequality, it yields
\ben
\f12\f{d}{dt}\|w\|_{L^4(\Omega)}^2 + 2\chi\|w\|_{L^4(\Omega)}^2\leq \f1{16C_1}\|\nabla^2{\mathbf g}\|_{L^2(\Omega)}^{2}+C(\|\nabla{\mathbf g}\|_{L^2(\Omega)}^2+\|w\|_{L^4(\Omega)}^2).
\label{A108}
\een

$ \bf Step ~4 $.
Multiplying \eqref{A108} by $ 4C_1(\mu + \chi)\nu $ and summing the resultant with \eqref{A100ub}, we finally obtain
\ben
&&2C_1 (\mu + \chi) \nu \f{d}{dt}(\|\nabla\mathbf g\|_{L^2(\Omega)}^2
  + \|\nabla\mathbf b\|_{L^2(\Omega)}^2 + \|w\|_{L^4(\Omega)}^2)
  + \f{(\mu+\chi)\nu}2 (\|\nabla^2 {\mathbf g}\|_{L^2(\Omega)}^2+ \|\nabla^2 {\mathbf b}\|_{L^2}^2)
  \notag\\
& &+ 8C_1 \chi (\mu + \chi)\nu \|w\|_{L^4}^2
   + C_1 \nu \|\p_t{\mathbf g}\|_{L^2}^2
   + C_1(\mu + \chi)\|\p_t{\mathbf b}\|_{L^2}^2
     \notag\\
&\leq& C(1+\|{\mathbf u}\|_{L^2}^2 \|\nabla{\mathbf u}\|_{L^2}^2
    + \|{\mathbf b}\|_{L^2}^2\|\nabla{\mathbf b}\|_{L^2}^2)
   (\|\nabla{\mathbf g}\|_{L^2}^2 + \|\nabla{\mathbf b}\|_{L^2}^2 + \|w\|_{L^4}^2)+C(\|{\mathbf u}\|_{L^2(\Omega)}^2+\|w\|_{L^2(\Omega)}^2),\notag
\label{A109}
\een
where $ C \gg C_1 $.
This, together with Gr\"{o}nwall's inequality and Lemma \ref{uL2} yields
\begin{eqnarray*}\label{A2}
&&\sup\limits_{0 \leq t \leq T}(\|\nabla{\mathbf g}\|^2_{L^2(\Omega)} + \|\nabla{\mathbf b}\|^2_{L^2(\Omega)} + \|w\|^2_{L^4(\Omega)}) +
\int_0^t(\|\p_t{\mathbf g}\|^2_{L^2(\Omega)}
+ \|\p_t{\mathbf b}\|^2_{L^2(\Omega)})\,dt   \\
&&+\int_0^t(\|\nabla^2{\mathbf g}\|^2_{L^2(\Omega)}+ \|\nabla^2{\mathbf b}\|^2_{L^2(\Omega)} + \|w\|^2_{L^4(\Omega)})\,dt \leq C,
\end{eqnarray*}
where $C = C(\Omega, T, \|{\mathbf u}_0\|_{H^1(\Omega)}, \|{\mathbf b}_0\|_{H^1(\Omega)}, \|w_0\|_{L^4(\Omega)})$. Thus, the proof of Lemma \ref{gH1} is finished.
\end{proof}
~\\
\subsection{The estimate of $ \|{\mathbf u}\|_{L^2(0,T;H^2(\Omega))} $}
~\\
\label{section3.3}

To prove the global existence of weak solutions, we need the estimates of
$ \|{\mathbf u}\|_{L^2(0, T; H^2(\Omega))} $ and
$ \|{\mathbf b}\|_{L^2(0, T; H^2(\Omega))} $.
The estimate of
$ \|{\mathbf b}\|_{L^2(0, T; H^2(\Omega))} $ has been obtained in Lemma \ref{gH1}. For the estimate of
$ \|{\mathbf u}\|_{L^2(0, T; H^2(\Omega))}$, by Lemma \ref{stokes}, it follows that
\begin{eqnarray}\label{uH2}
\|{\mathbf u}\|_{H^2(\Omega)}
\leq C_1(\| \p_t\mathbf u \|_{L^2(\Omega)} + \|\mathbf u \cdot \nabla\mathbf u\|_{L^2(\Omega)} + \|\mathbf b \cdot \nabla\mathbf b\|_{L^2(\Omega)} + \|\nabla w\|_{L^2(\Omega)}).
\end{eqnarray}
Then by H\"{o}lder's inequality, Corollary \ref{C1} and Young's inequality,
we get
\begin{eqnarray}\label{uh21}
&&\|\mathbf u \cdot \nabla\mathbf u\|_{L^2(\Omega)}
\leq \|\mathbf u\|_{L^4(\Omega)} \|\nabla\mathbf u\|_{L^4(\Omega)} \nonumber\\
&\leq& C_2 \|\mathbf u\|_{L^2(\Omega)}^{\f12} \|\nabla\mathbf u\|_{L^2(\Omega)}^{\f12}(\|\nabla\mathbf u\|_{L^2(\Omega)}^{\f12} \|\nabla^2\mathbf u\|_{L^2(\Omega)}^{\f12} + \|\nabla\mathbf u\|_{L^2(\Omega)})
       \nonumber\\
&\leq& 2 C_2 \|\mathbf u\|_{H^1(\Omega)}^{\f32} \|\mathbf u\|_{H^2(\Omega)}^{\f12}\leq \f1{2C_1} \|\mathbf u\|_{H^2(\Omega)}
     + 2C_1 C_2^2 \|\mathbf u\|_{H^1(\Omega)}^3,
\end{eqnarray}
and similarly,
\begin{eqnarray}\label{uh22}
\|\mathbf b \cdot \nabla\mathbf b\|_{L^2(\Omega)}
\leq \f1{2C_1} \|\mathbf b\|_{H^2(\Omega)}
     + 2C_1 C_2^2 \|\mathbf b\|_{H^1(\Omega)}^3.
\end{eqnarray}
Plugging \eqref{uh21} and \eqref{uh22} into \eqref{uH2}, we get
\begin{eqnarray}\label{uH21}
&&\|\mathbf u\|_{H^2(\Omega)}\nonumber\\
&\leq& 2C_1 \|\p_t\mathbf u \|_{L^2(\Omega)} + 4 C_1^2 C_2^2(\|\mathbf u\|_{H^1(\Omega)}^3 + \|\mathbf b\|_{H^1(\Omega)}^3) + \|\mathbf b\|_{H^2(\Omega)} + 2C_1\|\nabla w\|_{L^2}  \nonumber\\
&\leq& C (\|\p_t\mathbf g \|_{L^2(\Omega)}+ \|\p_t w \|_{L^2(\Omega)})
   + C(\|\mathbf u\|_{H^1(\Omega)}^3 + \|\mathbf b\|_{H^1(\Omega)}^3) + \|\mathbf b\|_{H^2(\Omega)} + C\|\nabla w\|_{L^2(\Omega)}.
\end{eqnarray}
By applying Lemmas \ref{uL2}--\ref{gH1}, it holds that
\begin{eqnarray}\label{uH22}
\|\mathbf u\|_{L^2(0,T; H^2(\Omega))}
&\leq& C (\|\p_t\mathbf g \|_{L^2(0,T; L^2(\Omega))}+ \|\p_t w \|_{L^2(0,T; L^2(\Omega))}) + C \sqrt{T} \sup_{0 \leq t \leq T}\|\mathbf u\|_{H^1(\Omega)}^3    \nonumber\\
  && + C\sqrt{T} \sup_{0 \leq t \leq T}\|\mathbf b\|_{H^1(\Omega)}^3 + \|\mathbf b\|_{L^2(0,T; H^2(\Omega))} + C\|\nabla w\|_{L^2(0,T; L^2(\Omega))}\nonumber\\
&\leq& C(1 + \|\p_t w \|_{L^2(0,T; L^2(\Omega))}
       + \|\nabla w\|_{L^2(0,T; L^2(\Omega))}).
\end{eqnarray}

In the following, we focus on establishing estimates of
$ \|\p_t w\|_{L^2(\Omega)} $ and
$ \|\nabla w\|_{L^2(\Omega)} $.
In fact, we will see later that the estimate of
$ \|\p_t w\|_{L^2(\Omega)} $ can be bounded by
$ \|\mathbf u \|_{H^1(\Omega)} $ and $ \|\nabla w\|_{L^4(\Omega)} $.
Moreover, the bound of $ \|\nabla w\|_{L^4(\Omega)} $ can be controlled by $ \|w\|_{L^\infty(0, T;L^q(\Omega))} $ and $ \|\nabla^2 \mathbf g\|_{L^2(0, T;L^q(\Omega))} $.
To this end, we first establish the estimate of
$ \|w\|_{L^\infty(0, T;L^q(\Omega))} $.

\begin{lemma}\label{w-Lp}
In addition to the conditions in Lemma \ref{gH1}, if we further assume
$w_0 \in L^q (\Omega)$ for any $2\leq q \leq \infty$, then it holds that
\begin{equation*}
\sup\limits_{0\leq t\leq T}\|w\|_{L^q(\Omega)} \leq C,
\end{equation*}
where the constant $C$ depends only on $\Omega$, $T$, $\|{\mathbf u}_0\|_{H^1(\Omega)}$, $\|{\mathbf b}_0\|_{H^1(\Omega)}$ and $\|{w}_0\|_{L^{q}(\Omega)}$.
\end{lemma}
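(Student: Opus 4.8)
The plan is to derive an $L^q$ estimate for $w$ directly from the equation $\eqref{eq1}_2$, namely $w_t + (\mathbf u\cdot\nabla)w + 2\chi w = \chi\nabla^\perp\cdot\mathbf u$, by testing against $|w|^{q-2}w$. First I would multiply $\eqref{eq1}_2$ by $|w|^{q-2}w$ and integrate over $\Omega$. Because $\nabla\cdot\mathbf u=0$ and $\mathbf u|_{\partial\Omega}=0$, the convection term drops out: $\int_\Omega (\mathbf u\cdot\nabla)w\,|w|^{q-2}w\,dx = \frac1q\int_\Omega \mathbf u\cdot\nabla(|w|^q)\,dx = 0$. This yields
\begin{equation*}
\frac1q\frac{d}{dt}\|w\|_{L^q(\Omega)}^q + 2\chi\|w\|_{L^q(\Omega)}^q = \chi\int_\Omega (\nabla^\perp\cdot\mathbf u)\,|w|^{q-2}w\,dx \leq \chi\|\nabla\mathbf u\|_{L^q(\Omega)}\,\|w\|_{L^q(\Omega)}^{q-1},
\end{equation*}
where I used H\"older's inequality with exponents $q$ and $q/(q-1)$. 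Dividing by $\|w\|_{L^q(\Omega)}^{q-1}$ gives $\frac{d}{dt}\|w\|_{L^q(\Omega)} + 2\chi\|w\|_{L^q(\Omega)} \leq \chi\|\nabla\mathbf u\|_{L^q(\Omega)}$.

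The remaining task is to control $\|\nabla\mathbf u\|_{L^q(\Omega)}$ in a time-integrable way. Using $\mathbf u = \mathbf g + \mathbf v$, I would split $\|\nabla\mathbf u\|_{L^q(\Omega)} \leq \|\nabla\mathbf g\|_{L^q(\Omega)} + \|\nabla\mathbf v\|_{L^q(\Omega)}$. For the $\mathbf v$ part, the key estimate \eqref{keyestimates} gives $\|\nabla\mathbf v\|_{L^q(\Omega)} \leq C\|w\|_{L^q(\Omega)}$, so this term can be absorbed into the left-hand side (or treated by Gr\"onwall). For the $\mathbf g$ part, I would use the interpolation inequality in Corollary \ref{C1} (suitably, for $2\le q<\infty$, via Gagliardo--Nirenberg $\|\nabla\mathbf g\|_{L^q(\Omega)}\le C(\|\nabla\mathbf g\|_{L^2(\Omega)}^{1-\theta}\|\nabla^2\mathbf g\|_{L^2(\Omega)}^{\theta}+\|\nabla\mathbf g\|_{L^2(\Omega)})$ with $\theta=1-2/q\in[0,1)$) together with the bounds from Lemma \ref{gH1}: $\sup_t\|\nabla\mathbf g\|_{L^2(\Omega)}\le C$ and $\int_0^T\|\nabla^2\mathbf g\|_{L^2(\Omega)}^2\,dt\le C$. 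Since $\theta<1$, Young's inequality converts $\|\nabla^2\mathbf g\|_{L^2(\Omega)}^\theta$ into something bounded in $L^1_t$ plus a constant, so $\int_0^T\|\nabla\mathbf g\|_{L^q(\Omega)}\,dt\le C$ for every finite $q$. Hence $\int_0^T\|\nabla\mathbf u\|_{L^q(\Omega)}\,dt$ is bounded (modulo the $\|w\|_{L^q}$ feedback term), and Gr\"onwall's inequality applied to the ODE above yields $\sup_{0\le t\le T}\|w\|_{L^q(\Omega)}\le C$ with the asserted dependence on the data. The case $q=\infty$ is then obtained by passing to the limit $q\to\infty$ in the resulting bound (the constants can be taken uniform in $q$ for $q$ large, or one simply notes $\|w\|_{L^\infty}=\lim_{q\to\infty}\|w\|_{L^q}$ on the bounded domain $\Omega$).

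The main obstacle I anticipate is handling the borderline $L^\infty$ case cleanly. For finite $q$ everything is routine once Lemma \ref{gH1} is in hand, but the logarithmic-type subtlety in controlling $\|\nabla\mathbf u\|_{L^\infty}$ (which is exactly why the refined Stokes estimate \eqref{newstokes1} and \eqref{keyestimates1} were introduced) means one cannot simply set $q=\infty$ in the argument above. The cleanest route is to prove the estimate for all finite $q$ with a constant that grows at most mildly in $q$, or alternatively to avoid the $q=\infty$ endpoint here and invoke it only through finite-$q$ bounds in the subsequent propositions; in any case the $L^\infty$ bound on $w$ should be extracted as a limit of the finite-$q$ bounds rather than proved directly. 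A secondary point requiring care is the justification that the formal testing against $|w|^{q-2}w$ is legitimate for the (a priori only weak/strong) solution at hand — this is standard for smooth solutions, which is the setting in which these a priori estimates are derived.
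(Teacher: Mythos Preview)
Your approach is essentially identical to the paper's: multiply $\eqref{eq1}_2$ by $|w|^{q-2}w$, use $\nabla\cdot\mathbf u=0$ to kill the convection term, split $\nabla\mathbf u=\nabla\mathbf g+\nabla\mathbf v$, absorb the $\mathbf v$-part via \eqref{keyestimates}, and control the $\mathbf g$-part using the $L^2_tH^2_x$ bound from Lemma~\ref{gH1}. The paper's own proof is only a one-line sketch referring back to Step~3 of Lemma~\ref{gH1} and the bound $\|\mathbf g\|_{L^1(0,T;H^2(\Omega))}$; your write-up is in fact more detailed, and your flagging of the $q=\infty$ endpoint as the delicate case is appropriate (the paper's sketch does not address it separately either).
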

\begin{proof}
The proof of Lemma \ref{w-Lp} is similar with the estimate of $ \|w\|_{L^4(\Omega)} $ in Step 3 of Lemma \ref{gH1}, we give only a sketch here.
Multiplying $ \eqref{eq1}_2 $ by $ |w|^{q-2} w $ and integrating on $ \Omega $,
it follows that the estimate of $ \|w\|_{L^q(\Omega)} $ can be bounded by $ \|{\mathbf g}\|_{L^1(0, T; H^2(\Omega))} $, which is the consequence of Lemma \ref{uL2} and Lemma \ref{gH1}.
\end{proof}
\vskip 2mm

In the following lemma, we establish the bound of
$ \|{\mathbf g}\|_{L^2(0, T; W^{2,q}(\Omega))} $.

\begin{lemma}\label{gW2q}
Under the assumptions of  Lemma \ref{w-Lp}, if in addition,
$\mathbf {u}_0 \in H^{2}(\Omega)$ and $w_0 \in H^{1}(\Omega)$, then
for any $ 2 \leq q < \infty $, it holds that
\begin{equation*}
  \int_0^T \|\nabla^2{\mathbf g}\|^2_{L^q(\Omega)} dt \leq C,
\end{equation*}
where the constant $C$ depends only on $\Omega$, $T$, $\|{\mathbf u}_0\|_{H^2(\Omega)}$, $\|{\mathbf b}_0\|_{H^1(\Omega)}$ and $\|{w}_0\|_{H^{1}(\Omega)}$.
\end{lemma}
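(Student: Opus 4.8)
The plan is to read \eqref{mathg} as a nonstationary Stokes system for $\mathbf g$ with forcing $\mathbf Q$ and to invoke maximal $L^2_tL^q_x$ regularity (Lemma \ref{timestokes}, in the version with viscosity $\mu+\chi$; the constant $(1-\mu-\chi)$ in front of $\nabla p$ is harmless and the factor $\mu+\chi$ is removed by an obvious rescaling in time), which gives
\[
\|\nabla^2\mathbf g\|_{L^2(0,T;L^q(\Omega))}\le C\big(\|\mathbf Q\|_{L^2(0,T;L^q(\Omega))}+\|\mathbf g_0\|_{H^2(\Omega)}\big).
\]
So the proof splits into controlling $\mathbf g_0$ and controlling $\mathbf Q$. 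The first is immediate: applying the Stokes estimate \eqref{Stokes1} to \eqref{stokesv} at $t=0$ gives $\|\mathbf v_0\|_{H^2(\Omega)}\le C\|\nabla^\perp w_0\|_{L^2(\Omega)}\le C\|w_0\|_{H^1(\Omega)}$, whence $\mathbf g_0=\mathbf u_0-\mathbf v_0\in H^2(\Omega)$, using precisely the two new hypotheses $\mathbf u_0\in H^2(\Omega)$ and $w_0\in H^1(\Omega)$.

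The core is the bound $\|\mathbf Q\|_{L^2(0,T;L^q(\Omega))}\le C$ for each fixed $q\in[2,\infty)$, which I would establish term by term. For the two genuinely nonlinear (highest-order) pieces I split $\mathbf u\cdot\nabla\mathbf u=\mathbf u\cdot\nabla\mathbf g+\mathbf u\cdot\nabla\mathbf v$ and combine H\"older, the Sobolev embedding $H^1\hookrightarrow L^{2q}$ in $\R^2$, and the Gagliardo--Nirenberg inequality (Lemma \ref{P1}) in the form $\|\nabla\mathbf g\|_{L^{2q}}\le C(\|\nabla^2\mathbf g\|_{L^2}^{1-1/q}\|\nabla\mathbf g\|_{L^2}^{1/q}+\|\nabla\mathbf g\|_{L^2})$, to get
\[
\|\mathbf u\cdot\nabla\mathbf g\|_{L^q}\le C\|\mathbf u\|_{H^1}\big(\|\nabla^2\mathbf g\|_{L^2}^{1-1/q}\|\nabla\mathbf g\|_{L^2}^{1/q}+\|\nabla\mathbf g\|_{L^2}\big),
\]
and likewise $\|\mathbf b\cdot\nabla\mathbf b\|_{L^q}\le C\|\mathbf b\|_{H^1}(\|\nabla^2\mathbf b\|_{L^2}^{1-1/q}\|\nabla\mathbf b\|_{L^2}^{1/q}+\|\nabla\mathbf b\|_{L^2})$, while $\|\mathbf u\cdot\nabla\mathbf v\|_{L^q}\le C\|\mathbf u\|_{H^1}\|w\|_{L^{2q}}$ by \eqref{keyestimates}. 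Since $\|\mathbf u\|_{H^1},\|\mathbf b\|_{H^1},\|\nabla\mathbf g\|_{L^2},\|\nabla\mathbf b\|_{L^2}$ are bounded on $[0,T]$ by Lemma \ref{gH1} and $\|w\|_{L^{2q}}$ is bounded on $[0,T]$ by Lemma \ref{w-Lp}, squaring and integrating in time leaves only $\int_0^T\|\nabla^2\mathbf g\|_{L^2}^{2-2/q}\,dt$ and $\int_0^T\|\nabla^2\mathbf b\|_{L^2}^{2-2/q}\,dt$; because $2-2/q<2$, H\"older in time and the bound $\int_0^T(\|\nabla^2\mathbf g\|_{L^2}^2+\|\nabla^2\mathbf b\|_{L^2}^2)\,dt\le C$ from Lemma \ref{gH1} make these finite.

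It remains to handle the three lower-order terms. Using $\nabla\cdot\mathbf u=0$ and $\mathbf u|_{\partial\Omega}=0$ one has $\mathbf u\cdot\nabla w=\nabla\cdot(\mathbf u w)$, and together with the identity $\nabla^\perp(\nabla\cdot\mathbf h)=\nabla\cdot G$, where $G$ is the antisymmetric matrix with $G_{12}=-G_{21}=-\nabla\cdot\mathbf h$ and $G_{11}=G_{22}=0$, both $A^{-1}\nabla^\perp(\mathbf u\cdot\nabla w)$ and $A^{-1}\nabla^\perp(\nabla^\perp\cdot\mathbf u)$ take the form $A^{-1}(\nabla\cdot F)$ with $F_{ij}=\partial_k H^k_{ij}$ and $H^k_{ij}$ equal, up to sign and zero entries, to $u_k w$, respectively to $u_k$. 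For a smooth solution these $H^k_{ij}$ lie in $W_0^{1,q}(\Omega)$, so the scaling-subcritical Stokes estimate \eqref{Stokes3} gives $\|A^{-1}\nabla^\perp(\mathbf u\cdot\nabla w)\|_{L^q}\le C\|\mathbf u w\|_{L^q}\le C\|\mathbf u\|_{H^1}\|w\|_{L^{2q}}$ and $\|A^{-1}\nabla^\perp(\nabla^\perp\cdot\mathbf u)\|_{L^q}\le C\|\mathbf u\|_{L^q}\le C\|\mathbf u\|_{H^1}$, while $\|\mathbf v\|_{L^q}\le C\|w\|_{L^q}$ by \eqref{keyestimates}; all three are bounded on $[0,T]$ by Lemmas \ref{gH1} and \ref{w-Lp}, hence lie in $L^2(0,T;L^q(\Omega))$. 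Adding up all contributions yields $\|\mathbf Q\|_{L^2(0,T;L^q(\Omega))}\le C$, and the lemma follows.

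I expect the main obstacle to be the interpolation bookkeeping in the nonlinear terms: the Lebesgue exponents must be chosen so that $\nabla^2\mathbf g$ and $\nabla^2\mathbf b$ enter with a power of time strictly below $2$, since this is exactly what lets the $L^2_tH^2$ bounds of Lemma \ref{gH1} close the estimate; the second delicate point is the divergence-form rewriting that sends the nonlocal $A^{-1}\nabla^\perp$ terms into the subcritical Stokes estimate \eqref{Stokes3}, which is what keeps those contributions at the level of $\|\mathbf u w\|_{L^q}$ and $\|\mathbf u\|_{L^q}$ rather than forcing control of $\nabla w$ or $\nabla\mathbf u$ in $L^q$.
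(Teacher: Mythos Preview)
Your proposal is correct and follows essentially the same route as the paper: apply the maximal regularity estimate \eqref{timestokes1} to the nonstationary Stokes system \eqref{mathg}, control $\|\mathbf g_0\|_{H^2}$ via $\|\mathbf u_0\|_{H^2}+\|w_0\|_{H^1}$, and bound $\|\mathbf Q\|_{L^2(0,T;L^q)}$ term by term, rewriting the nonlocal pieces in divergence form so that \eqref{Stokes3} yields $\|A^{-1}\nabla^\perp(\mathbf u\cdot\nabla w)\|_{L^q}\le C\|\mathbf u w\|_{L^q}$ and $\|A^{-1}\nabla^\perp(\nabla^\perp\cdot\mathbf u)\|_{L^q}\le C\|\mathbf u\|_{L^q}$.

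The only noteworthy difference is in the treatment of the quadratic terms $\mathbf u\cdot\nabla\mathbf u$ and $\mathbf b\cdot\nabla\mathbf b$: the paper simply uses the 2D Sobolev embedding $H^1\hookrightarrow L^{2q}$ on the gradient factor, writing $\|\nabla\mathbf g\|_{L^{2q}}\le C\|\mathbf g\|_{H^2}$ and $\|\nabla\mathbf b\|_{L^{2q}}\le C\|\mathbf b\|_{H^2}$, which feeds directly into the $L^2_tH^2$ bound from Lemma \ref{gH1} without any further interpolation in time. Your Gagliardo--Nirenberg step $\|\nabla\mathbf g\|_{L^{2q}}\le C\|\nabla^2\mathbf g\|_{L^2}^{1-1/q}\|\nabla\mathbf g\|_{L^2}^{1/q}+C\|\nabla\mathbf g\|_{L^2}$ followed by H\"older in $t$ is perfectly valid but slightly more elaborate than needed; the paper's version avoids the sub-$2$ power bookkeeping entirely.
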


\begin{proof}
First, by applying Lemma \ref{timestokes} to \eqref{mathg} and Lemma \ref{stokes}, it is clear that
\ben
&&\|\nabla^2\mathbf g\|_{L^2(0,T;L^q(\Omega))}\leq C(\|\mathbf Q\|_{L^2(0,T;L^q(\Omega))}+\|{\mathbf g}_0\|_{H^{2}(\Omega)})\notag\\
&\leq&C(\|\mathbf Q\|_{L^2(0,T;L^q(\Omega))}+\|{\mathbf u}_0\|_{H^{2}(\Omega)}+\|{w}_0\|_{H^{1}(\Omega)}),
\label{A110}
\een
with
\ben
\|\mathbf Q\|_{L^2(0,T;L^q(\Omega))}
&\leq& \|{\mathbf b}\cdot\nabla{\mathbf b}\|_{L^2(0,T;L^q(\Omega))}+ \|{\mathbf u}\cdot\nabla{\mathbf u}\|_{L^2(0,T;L^q(\Omega))}+ C\|A^{-1}\nabla^{\perp}({\mathbf u} \cdot
        \nabla{w})\|_{L^2(0,T;L^q(\Omega))}\notag\\
& &+ C\|A^{-1}\nabla^{\perp}(\nabla^{\perp}\cdot
      {\mathbf u})\|_{L^2(0,T;L^q)} + C\|{\mathbf v}\|_{L^2(0,T;L^q(\Omega))}=\sum\limits_{i=1}^5 I_i.
\label{A111}
\een

Next, for the five terms $ I_i $, $ i=1,2, \dots, 5 $,
by employing H\"{o}lder inequality, Sobolev embedding inequalities, Lemmas \ref{uL2}--\ref{w-Lp}, \eqref{keyestimates} and the equality ${\mathbf u}\cdot\nabla w=\nabla\cdot({\mathbf u}w)$, we have
\begin{eqnarray}
\label{A111.1}
I_1
&\leq& \|{\mathbf b}\|_{L^\infty(0,T;L^{2q}(\Omega))}
       \|\nabla{\mathbf b}\|_{L^2(0,T;L^{2q}(\Omega))}\leq C \|{\mathbf b}\|_{L^\infty(0,T;H^{1}(\Omega))}
        \|{\mathbf b}\|_{L^2(0,T;H^{2}(\Omega))}\leq C,\notag\\
\label{A112}
I_2
&\leq& \|{\mathbf u}\|_{L^\infty(0,T;L^{2q}(\Omega))}\|\nabla{\mathbf u}\|_{L^2(0,T;L^{2q}(\Omega))}\notag\\
&\leq& C\|{\mathbf u}\|_{L^\infty(0,T;H^{1}(\Omega))}(\|\nabla{\mathbf g}\|_{L^2(0,T;L^{2q}(\Omega))}+\|\nabla{\mathbf v}\|_{L^2(0,T;L^{2q}(\Omega))})\notag\\
&\leq& C(\|{\mathbf u}\|_{L^\infty(0,T;L^{2}(\Omega))}+\|\nabla{\mathbf u}\|_{L^\infty(0,T;L^{2}(\Omega))})(\|{\mathbf g}\|_{L^2(0,T;H^{2}(\Omega))}+\|w\|_{L^2(0,T;L^{2q}(\Omega))})\notag\\
&\leq& C(1 + \|\nabla{\mathbf g}\|_{L^\infty(0,T;L^{2}(\Omega))}+\|\nabla{\mathbf v}\|_{L^\infty(0,T;L^{2}(\Omega))})(\|{\mathbf g}\|_{L^2(0,T;L^{2}(\Omega))}+\|\Delta{\mathbf g}\|_{L^2(0,T;L^{2}(\Omega))}+1)\notag\\
&\leq& C(1 + \|w\|_{L^\infty(0,T;L^{2}(\Omega))})(\|{\mathbf u}\|_{L^2(0,T;L^{2}(\Omega))}+\|w\|_{L^2(0,T;L^{2}(\Omega))}+ 1)\leq C, \notag\\
\label{A113}
\sum_{i=3}^5 I_i
&\leq& C \|A^{-1}\nabla^{\perp}\nabla\cdot ({\mathbf u}{w})\|_{L^2(0,T;L^{q}(\Omega))}
   + C \|A^{-1}\nabla^{\perp}(\nabla^{\perp}\cdot
   {\mathbf u})\|_{L^2(0,T;L^{q}(\Omega))}+ C \|{\mathbf v}\|_{L^2(0,T;L^{q}(\Omega))}  \notag\\
&\leq& C\|{\mathbf u}{w}\|_{L^2(0,T;L^{q}(\Omega))}
   + C\|{\mathbf u}\|_{L^2(0,T;L^{q}(\Omega))}
   + C\|w\|_{L^2(0,T;L^{q}(\Omega))}\notag\\
&\leq& C\|{\mathbf u}\|_{L^2(0,T;H^{1}(\Omega))}\|{w}\|_{L^\infty(0,T;L^{2q}(\Omega))} + C\|{\mathbf u}\|_{L^2(0,T;H^{1}(\Omega))} + C\|w\|_{L^2(0,T;L^{q}(\Omega))}\leq C. \notag
\end{eqnarray}
By summing up the above estimates and using Lemmas \ref{uL2}--\ref{w-Lp}, we finally obtain $\|\mathbf g\|_{L^2(0,T;W^{2,q}(\Omega))}\leq C$.
\end{proof}

\vskip .1in

With the help of Lemmas \ref{w-Lp} and \ref{gW2q}, we now return to establish the estimate of $ \|\nabla w \|_{L^q(\Omega)} $.

\begin{lemma}\label{nablaw-Lp}
In addition to the conditions in Lemma \ref{gW2q}, we further assume
$ \nabla w_0 \in L^{q}(\Omega) $ for any $ 2 < q < \infty $, then it holds that
\begin{equation*}
\sup\limits_{0\leq t\leq T} \|\nabla w\|_{L^q(\Omega)} \leq C,
\end{equation*}
where the constant $ C $ depends only on $\Omega$, $ T $, $ \|{\mathbf u}_0\|_{H^{2}(\Omega)} $, $ \|{\mathbf b}_0\|_{H^{1}(\Omega)} $, $ \|{w}_0\|_{H^{1}(\Omega)} $ and
$ \|\nabla{w}_0\|_{L^{q}(\Omega)} $.
\end{lemma}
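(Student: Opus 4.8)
\emph{Proof strategy.} The plan is to derive a closed differential inequality for $\|\nabla w\|_{L^q(\Omega)}$ directly from the transport equation $\eqref{eq1}_2$, and then to close it by a logarithmic (Osgood-type) Gr\"onwall argument, exploiting the decomposition $\mathbf u=\mathbf g+\mathbf v$ together with the bounds already obtained in Lemmas \ref{uL2}--\ref{gW2q}. First I would apply $\nabla$ to $\eqref{eq1}_2$, which gives for each component $\p_j w$ the transport-type equation
\[
\p_t\p_j w+(\mathbf u\cdot\nabla)\p_j w+2\chi\p_j w=\chi\,\p_j(\nabla^\perp\cdot\mathbf u)-(\p_j\mathbf u)\cdot\nabla w .
\]
Multiplying by $|\nabla w|^{q-2}\p_j w$, summing over $j$ and integrating over $\Omega$, the transport term contributes $\f1q\int_\Omega\mathbf u\cdot\nabla|\nabla w|^q\,dx=0$ thanks to $\nabla\cdot\mathbf u=0$ and $\mathbf u|_{\p\Omega}=0$, while the commutator term is pointwise bounded by $|\nabla\mathbf u|\,|\nabla w|$. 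Applying H\"older's inequality to the forcing term and dividing by $\|\nabla w\|_{L^q(\Omega)}^{q-1}$, I would arrive at
\[
\f{d}{dt}\|\nabla w\|_{L^q(\Omega)}+2\chi\|\nabla w\|_{L^q(\Omega)}\leq\|\nabla\mathbf u\|_{L^\infty(\Omega)}\|\nabla w\|_{L^q(\Omega)}+\chi\|\nabla^2\mathbf u\|_{L^q(\Omega)} .
\]

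Next I would control the two coefficients through $\mathbf u=\mathbf g+\mathbf v$. For the top-order term, Lemma \ref{stokes} applied to \eqref{stokesv} gives $\|\nabla^2\mathbf v\|_{L^q(\Omega)}\leq C\|\nabla w\|_{L^q(\Omega)}$, and Lemma \ref{gW2q} gives $\|\nabla^2\mathbf g\|_{L^q(\Omega)}\in L^2(0,T)$, so $\|\nabla^2\mathbf u\|_{L^q(\Omega)}\leq\|\nabla^2\mathbf g\|_{L^q(\Omega)}+C\|\nabla w\|_{L^q(\Omega)}$. For $\|\nabla\mathbf u\|_{L^\infty(\Omega)}$, the Sobolev embedding $W^{2,q}(\Omega)\hookrightarrow W^{1,\infty}(\Omega)$ (valid since $q>2$) together with Lemma \ref{gW2q} gives $\|\nabla\mathbf g\|_{L^\infty(\Omega)}\leq C(1+\|\nabla^2\mathbf g\|_{L^q(\Omega)})\in L^2(0,T)$, while the logarithmic Stokes estimate \eqref{keyestimates1}, combined with $\sup_{[0,T]}\|w\|_{L^\infty(\Omega)}\leq C$ from Lemma \ref{w-Lp}, gives $\|\nabla\mathbf v\|_{L^\infty(\Omega)}\leq C\ln(e+\|\nabla w\|_{L^q(\Omega)})$. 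Setting $X(t):=e+\|\nabla w(t)\|_{L^q(\Omega)}\geq e$ and using $X\geq e$ to absorb the remaining inhomogeneous terms, these bounds combine into
\[
\f{d}{dt}X\leq\big(a(t)+C\ln X\big)X,\qquad a(t):=\|\nabla\mathbf g\|_{L^\infty(\Omega)}+\chi\|\nabla^2\mathbf g\|_{L^q(\Omega)}+C\in L^1(0,T).
\]
Dividing by $X$ yields $\f{d}{dt}\ln X\leq a(t)+C\ln X$, so Gr\"onwall's inequality gives $\ln X(t)\leq e^{CT}\big(\ln(e+\|\nabla w_0\|_{L^q(\Omega)})+\|a\|_{L^1(0,T)}\big)$, a bound depending only on the quantities listed in the statement (through Lemmas \ref{uL2}--\ref{gW2q}). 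Hence $\sup_{[0,T]}\|\nabla w\|_{L^q(\Omega)}\leq C$, as claimed.

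The main obstacle is that $\nabla^\perp w$ is only an $L^q$ source for the Stokes system \eqref{stokesv} defining $\mathbf v$, so $\|\nabla\mathbf v\|_{L^\infty(\Omega)}$ — hence $\|\nabla\mathbf u\|_{L^\infty(\Omega)}$, which is precisely the coefficient multiplying $\|\nabla w\|_{L^q(\Omega)}$ in the energy inequality — cannot be bounded linearly in $\|\nabla w\|_{L^q(\Omega)}$, but only up to a logarithmic loss. Consequently the inequality for $\|\nabla w\|_{L^q(\Omega)}$ is genuinely superlinear and must be closed by the logarithmic Gr\"onwall argument above rather than by a direct Gr\"onwall estimate; ensuring that all constants stay independent of higher-order norms of the data while doing so is the delicate part. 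A secondary, harmless point is that $\|\nabla^2\mathbf v\|_{L^q(\Omega)}\leq C\|\nabla w\|_{L^q(\Omega)}$ reintroduces $\|\nabla w\|_{L^q(\Omega)}$ linearly on the right-hand side, which is readily absorbed into the Gr\"onwall scheme.
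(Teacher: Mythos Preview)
Your proposal is correct and follows essentially the same route as the paper: differentiate $\eqref{eq1}_2$, derive the $L^q$ differential inequality for $\nabla w$, split $\mathbf u=\mathbf g+\mathbf v$, control $\|\nabla\mathbf g\|_{L^\infty}$ by the Sobolev embedding $W^{2,q}\hookrightarrow W^{1,\infty}$ together with Lemma~\ref{gW2q}, control $\|\nabla\mathbf v\|_{L^\infty}$ by the logarithmic Stokes estimate \eqref{keyestimates1} and Lemma~\ref{w-Lp}, and close by a logarithmic Gr\"onwall argument. The only cosmetic difference is that the paper packages the final inequality as a single product $C(1+\|w\|_{L^\infty})(1+\|\mathbf g\|_{W^{2,q}})(e+\|\nabla w\|_{L^q})\ln(e+\|\nabla w\|_{L^q})$ before invoking Gr\"onwall, whereas you separate the coefficient into $a(t)+C\ln X$; both lead to the same conclusion.
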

\begin{proof}
Differentiating $ \eqref{eq1}_2 $ with respect to spatial variable $ x_i $, $ i=1, 2 $, we have
\begin{eqnarray}\label{piw}
\p_i w_t + 2 \chi \p_iw + \mathbf u \cdot \nabla \p_iw + \p_i \mathbf u \cdot \nabla w
= \chi \p_i \nabla^\bot \cdot \mathbf u, \quad i=1,2.
\end{eqnarray}
Taking the inner product of \eqref{piw} with $ |\p_i w|^{q-2}\p_i w $ for any $ 2 \leq q < \infty $
and summing over $ i =1, 2 $, we get
\begin{eqnarray*}
\f1{q} \f{d}{dt} \|\nabla w\|_{L^q(\Omega)}^q + 2 \chi \|\nabla w\|_{L^q(\Omega)}^q
\leq \|\nabla \mathbf u\|_{L^\infty(\Omega)} \|\nabla w\|_{L^q(\Omega)}^q + \chi \|\nabla^2\mathbf u\|_{L^q(\Omega)} \|\nabla w\|_{L^q(\Omega)}^{q-1},
\end{eqnarray*}
and thus we have
\begin{eqnarray*}
\f{d}{dt} \|\nabla w\|_{L^q(\Omega)} + 2 \chi \|\nabla w\|_{L^q(\Omega)}
\leq \|\nabla \mathbf u\|_{L^\infty(\Omega)} \|\nabla w\|_{L^q(\Omega)}
     + \chi \|\nabla^2\mathbf u\|_{L^q(\Omega)}.
\end{eqnarray*}
Then, by using the Sobolev embedding inequalities, \eqref{keyestimates1} and \eqref{keyestimates}, it follows that
\begin{eqnarray*}
&&\f{d}{dt} \|\nabla w\|_{L^q(\Omega)} + 2 \chi \|\nabla w\|_{L^q(\Omega)}\\
&\leq& \|\nabla \mathbf g\|_{L^\infty(\Omega)} \|\nabla w\|_{L^q(\Omega)} + \|\nabla \mathbf v\|_{L^\infty(\Omega)} \|\nabla w\|_{L^q(\Omega)}
     + \chi \|\nabla^2\mathbf g\|_{L^q(\Omega)} + \chi \|\nabla^2\mathbf v\|_{L^q(\Omega)}\\
&\leq& C \|\mathbf g\|_{W^{2,q}(\Omega)} \|\nabla w\|_{L^q(\Omega)} + C(1+ \|w\|_{L^\infty(\Omega)}) \ln(e+\|\nabla w\|_{L^q(\Omega)}) \|\nabla w\|_{L^q(\Omega)} + C \|\mathbf g\|_{W^{2,q}(\Omega)}+ C \|\nabla w\|_{L^q(\Omega)}  \\
&\leq& C (1+\|w\|_{L^\infty(\Omega)})(1+\|\mathbf g\|_{W^{2,q}(\Omega)})(e+\|\nabla w\|_{L^q(\Omega)})\ln(e+\|\nabla w\|_{L^q(\Omega)}),
\end{eqnarray*}
for any $ 2 < q < \infty $.
This, together with Gr\"{o}nwall's inequality, Lemmas \ref{w-Lp}--\ref{gW2q}, yields
\begin{eqnarray*}
\sup\limits_{0 \leq t \leq T} \|\nabla w\|_{L^q(\Omega)} + 2 \chi \int_0^T \|\nabla w\|_{L^q(\Omega)} \, dt \leq C,
\end{eqnarray*}
where $ C $ depends only on $\Omega$, $ T $, $ \|{\mathbf u}_0\|_{H^{2}(\Omega)} $, $ \|{\mathbf b}_0\|_{H^{1}(\Omega)} $, $ \|{w}_0\|_{H^{1}(\Omega)} $ and
$ \|\nabla{w}_0\|_{L^{q}(\Omega)} $.
\end{proof}
\vskip 2mm

\begin{lemma}\label{ptw}
Under the assumptions of Lemma \ref{nablaw-Lp}, it holds that
\begin{equation*}
 \sup\limits_{0 \leq t \leq T}\|\p_t w\|^2_{L^2(\Omega)}\leq C,
\end{equation*}
where the constant $ C $ depends only on $\Omega$, $ T $, $ \|{\mathbf u}_0\|_{H^{2}(\Omega)} $, $ \|{\mathbf b}_0\|_{H^{1}(\Omega)} $, $ \|{w}_0\|_{H^{1}(\Omega)} $ and
$ \|\nabla{w}_0\|_{L^{4}(\Omega)} $.
\end{lemma}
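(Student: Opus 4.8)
The idea is to read $\p_t w$ off directly from the evolution equation $\eqref{eq1}_2$ and to bound its $L^2$ norm pointwise in time, so that no differential inequality or Gr\"onwall argument is needed here. Rewriting $\eqref{eq1}_2$ as
\[
\p_t w = \chi\,\nabla^{\perp}\!\cdot\mathbf u - \mathbf u\cdot\nabla w - 2\chi w,
\]
the triangle inequality gives
\[
\|\p_t w\|_{L^2(\Omega)} \le \chi\,\|\nabla\mathbf u\|_{L^2(\Omega)} + \|\mathbf u\cdot\nabla w\|_{L^2(\Omega)} + 2\chi\,\|w\|_{L^2(\Omega)} .
\]
It then remains to control the three terms on the right-hand side uniformly on $[0,T]$ using the estimates already established.

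\textbf{Key steps.} First, the last term is bounded by $\sup_{0\le t\le T}\|w\|_{L^2(\Omega)}\le C$ from Lemma \ref{uL2}, and the first term by $\sup_{0\le t\le T}\|\nabla\mathbf u\|_{L^2(\Omega)}\le C$, which follows from \eqref{uh1} together with the bound on $\|\nabla\mathbf g\|_{L^2(\Omega)}$ in Lemma \ref{gH1}. For the nonlinear transport term I would use H\"older's inequality,
\[
\|\mathbf u\cdot\nabla w\|_{L^2(\Omega)} \le \|\mathbf u\|_{L^4(\Omega)}\,\|\nabla w\|_{L^4(\Omega)},
\]
and then estimate $\|\mathbf u\|_{L^4(\Omega)}$ by Corollary \ref{C1}(1) and the $H^1$ bound on $\mathbf u$ (again Lemma \ref{gH1} with \eqref{uh1}), while $\|\nabla w\|_{L^4(\Omega)}$ is controlled by Lemma \ref{nablaw-Lp} applied with $q=4$ — which is precisely why the hypothesis of the present lemma requires $\nabla w_0\in L^4(\Omega)$. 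Collecting these bounds yields $\|\p_t w\|_{L^2(\Omega)}\le C$ for every $t\in[0,T]$, and taking the supremum over $t$ completes the proof.

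\textbf{Main difficulty.} There is no genuine obstacle in this lemma itself: the essential analytic work, namely obtaining a time-uniform bound on $\nabla w$ in $L^4(\Omega)$ (which rests in turn on the $L^q$ bound on $w$ from Lemma \ref{w-Lp} and on $\|\mathbf g\|_{L^2(0,T;W^{2,q}(\Omega))}$ from Lemma \ref{gW2q}, via the logarithmic Stokes estimate \eqref{keyestimates1}), has already been carried out in Lemma \ref{nablaw-Lp}. The present statement is therefore a bookkeeping step that packages the previous estimates; I emphasize that this direct route is preferable to differentiating $\eqref{eq1}_2$ in time and performing an energy estimate on $\p_t w$, since the latter would demand extra regularity and a compatibility condition on the data, which we wish to avoid.
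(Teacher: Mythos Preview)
Your proof is correct and follows essentially the same approach as the paper: the paper multiplies $\eqref{eq1}_2$ by $\p_t w$, integrates, and uses Young's inequality to absorb the $\frac12\|\p_t w\|_{L^2}^2$ term, which is mathematically equivalent to your direct use of the triangle inequality on the $L^2$ norm. The paper then bounds the transport term by $\|\mathbf u\|_{H^1}^2\|\nabla w\|_{L^4}^2$ and invokes Lemma \ref{gH1} and Lemma \ref{nablaw-Lp}, exactly as you do.
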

\begin{proof}
 Multiplying the equation $ \eqref{eq1}_2 $ with $ \p_t w $ and integrating on
$ \Omega $, we get
\begin{eqnarray*}
\|\p_t w\|_{L^2(\Omega)}^2
&=& - \int_{\Omega}\mathbf u \cdot \nabla w \p_tw \,dx
    - 2 \chi \int_{\Omega} w \p_tw \,dx
    + \chi \int_{\Omega} \nabla^{\bot} \cdot \mathbf u \p_tw \,dx   \nonumber\\
&\leq& \f12 \|\p_t w\|_{L^2(\Omega)}^2+ C \|\mathbf u \cdot \nabla w\|_{L^2(\Omega)}^2
       + C \|w\|_{L^2(\Omega)}^2
       + C\|\nabla\mathbf u\|_{L^2(\Omega)}^2  \nonumber\\
&\leq& \f12 \|\p_t w\|_{L^2(\Omega)}^2+ C \|\mathbf u \|_{H^1(\Omega)}^2 \|\nabla w\|_{L^4(\Omega)}^2
      + C \|w\|_{L^2(\Omega)}^2
      + C\|\mathbf u\|_{H^1(\Omega)}^2,
\end{eqnarray*}
which implies, after applying Lemma \ref{gH1} and Lemma \ref{nablaw-Lp}, that
$ \sup\limits_{0 \leq t \leq T}\|\p_t w\|^2_{L^2(\Omega)}\leq C $.
\end{proof}
\vskip 1.5mm

By using $ \mathbf u = \mathbf g + \mathbf v $, \eqref{keyestimates}, Lemma \ref{gH1} and Lemma \ref{ptw}, we get
$$ \|\p_t\mathbf u\|_{L^2(0, T; L^2(\Omega))} \leq C (\|\p_t\mathbf g\|_{L^2(0, T; L^2(\Omega))} + \|\p_t w\|_{L^2(0, T; L^2(\Omega))}) \leq C. $$

We now turn to estimate $ \|\mathbf u\|_{L^2(0, T; H^2(\Omega))} $.
By recalling inequality \eqref{uH22}, and using H\"{o}lder's inequality, Lemmas \ref{nablaw-Lp}--\ref{ptw},
we have
\begin{eqnarray}
&&\|\mathbf u\|_{L^2 (0, T; H^2(\Omega))}\leq C(1 + \|\p_t w \|_{L^2(0,T; L^2(\Omega))}
       + \|\nabla w\|_{L^2(0,T; L^2(\Omega))})\notag\\
&\leq& C (1 + \sqrt{T} \sup_{0 \leq t \leq T}\|\p_t w \|_{L^2(\Omega)}
       + \sqrt{T} \sup_{0 \leq t \leq T}\|\nabla w\|_{L^4(\Omega)})\leq C.\label{ul2h2}
\end{eqnarray}
In addition, with the help of Lemma \ref{gW2q} and Lemma \ref{nablaw-Lp}, we have
$$ \|\nabla^2\mathbf u\|_{L^2(0, T; L^4(\Omega))}
\leq C (\| \nabla^2 \mathbf g \|_{L^2(0, T; L^4(\Omega))}
+ \| \nabla w \|_{L^2(0, T; L^4(\Omega))}) \leq C. $$
Furthermore, we have the estimate of $ \|\p_t w\|_{L^4(\Omega)} $ in the next Lemma, which is used to guarantee the continuity of
the micro-rotation field at the initial time.
\begin{lemma}\label{ptw4}
Under the assumptions in Lemma \ref{ptw}, it holds that
\begin{equation*}
\int_0^T \|\p_t w\|_{L^4(\Omega)}^2 \,dt \leq C,
\end{equation*}
where $ C $ depends only on $\Omega$, $ T $, $ \|{\mathbf u}_0\|_{H^{2}(\Omega)} $, $ \|{\mathbf b}_0\|_{H^{1}(\Omega)} $, $ \|{w}_0\|_{H^{1}(\Omega)} $ and
$ \|\nabla{w}_0\|_{L^{4}(\Omega)} $.
\end{lemma}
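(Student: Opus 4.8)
The plan is to estimate $\|\p_t w\|_{L^4(\Omega)}$ directly from the equation $\eqref{eq1}_2$, which gives the pointwise identity
\[
\p_t w = -\,\mathbf u\cdot\nabla w - 2\chi w + \chi\nabla^\bot\cdot\mathbf u .
\]
Thus it suffices to control $\|\mathbf u\cdot\nabla w\|_{L^4(\Omega)}$, $\|w\|_{L^4(\Omega)}$ and $\|\nabla\mathbf u\|_{L^4(\Omega)}$ in $L^2(0,T)$. The last two are immediate: $\|w\|_{L^4(\Omega)}$ is bounded uniformly in $t$ by Lemma \ref{gH1} (so its $L^2_t$ norm is $\le C\sqrt T$), and since $\mathbf u=\mathbf g+\mathbf v$ with $\|\nabla\mathbf v\|_{L^4(\Omega)}\le C\|w\|_{L^4(\Omega)}$ by \eqref{keyestimates} and $\|\nabla\mathbf g\|_{L^4(\Omega)}\le C\|\mathbf g\|_{W^{2,4}(\Omega)}$, the integral $\int_0^T\|\nabla\mathbf u\|_{L^4(\Omega)}^2\,dt$ is controlled by $\int_0^T\|\mathbf g\|_{W^{2,4}(\Omega)}^2\,dt+\int_0^T\|w\|_{L^4(\Omega)}^2\,dt$, which is finite by Lemma \ref{gW2q} (with $q=4$) and Lemma \ref{gH1}.

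The one remaining term is $\mathbf u\cdot\nabla w$. First I would bound it as
\[
\|\mathbf u\cdot\nabla w\|_{L^4(\Omega)}\le\|\mathbf u\|_{L^\infty(\Omega)}\|\nabla w\|_{L^4(\Omega)} .
\]
By Corollary \ref{C1}(3) (or the $2$D embedding $H^2\hookrightarrow L^\infty$ together with Lemma \ref{uL2}), $\|\mathbf u\|_{L^\infty(\Omega)}\le C(1+\|\mathbf u\|_{H^2(\Omega)})$, while $\|\nabla w\|_{L^4(\Omega)}$ is bounded uniformly in $t$ by Lemma \ref{nablaw-Lp} (taking $q=4$). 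Hence
\[
\int_0^T\|\mathbf u\cdot\nabla w\|_{L^4(\Omega)}^2\,dt
\le C\Bigl(1+\sup_{0\le t\le T}\|\nabla w\|_{L^4(\Omega)}^2\Bigr)\int_0^T\bigl(1+\|\mathbf u\|_{H^2(\Omega)}^2\bigr)\,dt
\le C\bigl(1+\|\mathbf u\|_{L^2(0,T;H^2(\Omega))}^2\bigr),
\]
which is finite by the estimate \eqref{ul2h2} already established just before this lemma. Alternatively, to avoid even invoking $\|\mathbf u\|_{H^2}$ one can split $\mathbf u=\mathbf g+\mathbf v$ and use $\|\mathbf g\|_{L^\infty(\Omega)}\le C\|\mathbf g\|_{W^{2,4}(\Omega)}$ with Lemma \ref{gW2q}, and $\|\mathbf v\|_{L^\infty(\Omega)}\le C\|\mathbf v\|_{W^{1,4}(\Omega)}\le C\|w\|_{L^4(\Omega)}$ with \eqref{keyestimates} and Lemma \ref{gH1}; either route closes the estimate.

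Collecting the three contributions and using Minkowski's inequality in $L^2(0,T;L^4(\Omega))$ gives
\[
\int_0^T\|\p_t w\|_{L^4(\Omega)}^2\,dt
\le C\int_0^T\Bigl(\|\mathbf u\cdot\nabla w\|_{L^4(\Omega)}^2+\|w\|_{L^4(\Omega)}^2+\|\nabla\mathbf u\|_{L^4(\Omega)}^2\Bigr)\,dt\le C,
\]
with $C$ depending only on the quantities listed in the statement. I do not anticipate a genuine obstacle here; the only mild subtlety is bookkeeping which of the earlier estimates (Lemma \ref{gW2q} with $q=4$, Lemma \ref{nablaw-Lp} with $q=4$, and \eqref{ul2h2}) to cite so that the final constant depends only on $\|\mathbf u_0\|_{H^2}$, $\|\mathbf b_0\|_{H^1}$, $\|w_0\|_{H^1}$ and $\|\nabla w_0\|_{L^4}$ — in particular the hypothesis $\nabla w_0\in L^4(\Omega)$ is exactly what makes the $\|\nabla w\|_{L^4}$ bound available.
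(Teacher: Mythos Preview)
Your proof is correct and follows essentially the same route as the paper: the paper multiplies $\eqref{eq1}_2$ by $|\p_t w|^2\p_t w$ and integrates (which after dividing by $\|\p_t w\|_{L^4}^3$ yields exactly your triangle-inequality bound), then controls $\|\mathbf u\cdot\nabla w\|_{L^4}$ by $\|\mathbf u\|_{H^2}\|\nabla w\|_{L^4}$ and $\|\nabla\mathbf u\|_{L^4}$ by $\|\mathbf u\|_{H^2}$, invoking \eqref{ul2h2} and Lemma~\ref{nablaw-Lp} just as you do. Your alternative splitting $\mathbf u=\mathbf g+\mathbf v$ is a slight variant but not needed; the bookkeeping of constants is fine in either version.
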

\begin{proof}
We multiply the equation
$ \eqref{eq1}_2 $ with $ |\p_t w|^2 \p_t w $, integrate on $ \Omega $ and use the Sobolev embedding inequalities to get
\begin{eqnarray*}
\|\p_t w\|_{L^4(\Omega)}^2
&\leq & C \|\mathbf u \cdot \nabla w\|_{L^4(\Omega)}^2
       + C \|w\|_{L^4(\Omega)}^2
       + C\|\nabla\mathbf u\|_{L^4(\Omega)}^2  \\
&\leq& C \|\mathbf u \|_{H^2(\Omega)}^2 \|\nabla w\|_{L^4(\Omega)}^2
      + C \|w\|_{L^4(\Omega)}^2
      + C\|\mathbf u\|_{H^2(\Omega)}^2.
\end{eqnarray*}
By using \eqref{ul2h2} and Lemma \ref{nablaw-Lp},
we obtain $ \|\p_t w\|^2_{L^2(0, T; L^4(\Omega))} \leq C $.
\end{proof}
\vskip 2mm

Therefore, by Lemmas \ref{uL2}--\ref{ptw4}, we obtain the desired estimates in Proposition \ref{uH1}.
~\\
\subsection{Existence of weak solutions}
~\\
\label{section3.4}

In this subsection, we establish the following global existence of weak solutions to the system \eqref{eq1}--\eqref{eq20}.

\begin{theorem}\label{T2}
Let $ \Omega \subset \R^2 $ be a bounded domain with smooth boundary.
Suppose that the initial data ${\mathbf u}_0 \in H_0^1(\Omega) \cap H^{2}(\Omega), w_0\in W^{1,4}(\Omega), {\mathbf b}_0 \in H_0^1(\Omega)$, then there exist global in time weak solutions
$ ({\mathbf u}, w, {\mathbf b}) $ of system \eqref{eq1}--\eqref{eq20} such that for any $ T > 0 $,
\begin{eqnarray*}
& & {\mathbf u} \in L^\infty(0, T; H_0^1(\Omega))
     \cap L^2(0, T; W^{2,4}(\Omega)),\\
& & w \in L^\infty(0, T; W^{1,4}(\Omega)),\\     
& & {\mathbf b} \in L^\infty(0, T; H_0^1(\Omega))
     \cap L^2(0, T; H^2(\Omega)).
\end{eqnarray*}
\end{theorem}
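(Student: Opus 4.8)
The plan is to realize the weak solution as the limit of a sequence of smooth approximate solutions that satisfy, \emph{uniformly} in the approximation parameter, the a priori bounds of Proposition~\ref{uH1}. The construction of such approximations is by a standard scheme: one may use the spectral Galerkin method (projecting $\mathbf u,\mathbf b$ onto the eigenspaces of the Stokes operator $A$ and $w$ onto those of the Neumann Laplacian), or add an artificial diffusion $\varepsilon\Delta w$ to $\eqref{eq1}_2$ with $\partial_n w|_{\partial\Omega}=0$ and mollify the initial data so that $\mathbf u_0^\varepsilon\to\mathbf u_0$ in $H_0^1\cap H^2$, $w_0^\varepsilon\to w_0$ in $W^{1,4}$, $\mathbf b_0^\varepsilon\to\mathbf b_0$ in $H_0^1$; the regularized system is then fully parabolic and classically solvable, locally in time, with smooth solutions. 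Since the artificial term $\varepsilon\Delta w$ (or the Galerkin projection) contributes nonnegatively in every energy identity used in Lemmas~\ref{uL2}--\ref{ptw4}, the arguments of Section~\ref{section3.2}--Section~\ref{section3.3} apply to the approximations and yield bounds independent of the parameter; in particular the approximate solutions are global in time. One thus obtains, uniformly, that $\mathbf u^\varepsilon$ is bounded in $L^\infty(0,T;H_0^1)\cap L^2(0,T;W^{2,4})$ with $\partial_t\mathbf u^\varepsilon$ in $L^2(0,T;L^2)$, that $\mathbf b^\varepsilon$ is bounded in $L^\infty(0,T;H_0^1)\cap L^2(0,T;H^2)$ with $\partial_t\mathbf b^\varepsilon$ in $L^2(0,T;L^2)$, that $w^\varepsilon$ is bounded in $L^\infty(0,T;W^{1,4})$ with $\partial_t w^\varepsilon$ in $L^2(0,T;L^4)$ (Lemma~\ref{ptw4}), and, in the viscous version, that $\varepsilon\,\nabla w^\varepsilon\to0$ in $L^\infty(0,T;L^4)$.

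Next I would extract a convergent subsequence. From the above bounds and the Aubin--Lions--Simon lemma, together with the compact embeddings $H^2(\Omega)\hookrightarrow\hookrightarrow H^1(\Omega)$, $W^{2,4}(\Omega)\hookrightarrow\hookrightarrow H^1(\Omega)$ and $W^{1,4}(\Omega)\hookrightarrow\hookrightarrow L^4(\Omega)$ on the bounded domain $\Omega$, one gets, up to a subsequence, $\mathbf u^\varepsilon\to\mathbf u$ and $\mathbf b^\varepsilon\to\mathbf b$ strongly in $C(0,T;L^2(\Omega))\cap L^2(0,T;H_0^1(\Omega))$, $w^\varepsilon\to w$ strongly in $C(0,T;L^4(\Omega))$, and weak-$*$ convergence in the respective top spaces. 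The constraints $\nabla\!\cdot\!\mathbf u^\varepsilon=\nabla\!\cdot\!\mathbf b^\varepsilon=0$ pass to the weak limit, giving part~(3) of Definition~\ref{weak}; and the strong convergences at $t=0$, combined with $\mathbf u_0^\varepsilon\to\mathbf u_0$, $w_0^\varepsilon\to w_0$, $\mathbf b_0^\varepsilon\to\mathbf b_0$, show that the initial data \eqref{eq20} are attained.

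With these modes of convergence the passage to the limit in the weak formulation of the approximate system is routine: the quadratic convection, Lorentz and induction nonlinearities converge strongly in $L^1(0,T;L^1(\Omega))$ (indeed products of a strongly and a strongly, or of a strongly and a boundedly weak-$*$, convergent sequence), the product $\mathbf u^\varepsilon w^\varepsilon$ converges strongly in $L^2(0,T;L^2(\Omega))$, the linear couplings $\chi\nabla^\perp w^\varepsilon$ and $\chi\nabla^\perp\!\cdot\!\mathbf u^\varepsilon$ converge weakly, the dissipative terms converge by weak convergence of $\nabla\mathbf u^\varepsilon,\nabla\mathbf b^\varepsilon$, and the artificial term $\varepsilon\int_0^T\!\!\int_\Omega\nabla w^\varepsilon\cdot\nabla\psi\,dx\,dt$ (the only extra contribution, the boundary term vanishing by the Neumann condition) tends to $0$. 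Testing against $\boldsymbol\varphi,\psi,\boldsymbol\phi$ as in Definition~\ref{weak} and sending $\varepsilon\to0$ then produces the three integral identities of part~(2). Finally, the membership in $C(0,T;L^2(\Omega))$ for $\mathbf u,\mathbf b$ and in $C(0,T;L^4(\Omega))$ for $w$ required in part~(1) follows from the limiting bounds $\partial_t\mathbf u,\partial_t\mathbf b\in L^2(0,T;L^2(\Omega))$, $\partial_t w\in L^2(0,T;L^4(\Omega))$ together with the spatial regularity, once more via Aubin--Lions--Simon, while weak lower semicontinuity of the norms gives the stated $L^\infty$ and $L^2$ membership, completing the proof.

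The main obstacle, to my mind, is not the compactness step but the first step: producing approximations that are genuinely solvable \emph{and} for which the structural estimates of Proposition~\ref{uH1} hold uniformly. Those estimates are organized around the auxiliary Stokes field $\mathbf v=-\tfrac{\chi}{\mu+\chi}A^{-1}\nabla^\perp w$ of \eqref{stokesv} and the shifted velocity $\mathbf g=\mathbf u-\mathbf v$ solving \eqref{mathg}; this structure survives the artificial-diffusion regularization cleanly (the definition of $\mathbf v$ and equation \eqref{vt} only acquire harmless extra $\varepsilon$-terms), but it is awkward for a naive Galerkin truncation, since $\mathbf v^\varepsilon$ need not lie in the finite-dimensional space used for $\mathbf u$. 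I would therefore carry out the scheme so as to keep the $\mathbf g$-reformulation available at the approximate level; alternatively, with a Galerkin scheme one re-derives the $H^1$ and $W^{2,q}$ estimates directly for the truncated equations, which is the only place where any genuine care is needed. Everything downstream is standard once Proposition~\ref{uH1} is in force.
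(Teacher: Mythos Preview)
Your proposal is correct, but the paper takes a genuinely different route. Rather than constructing approximate solutions of the full coupled system via Galerkin truncation or artificial diffusion in $\eqref{eq1}_2$ and then invoking Aubin--Lions compactness, the paper applies \emph{Schauder's fixed point theorem}. Concretely, it fixes $f\in D\subset X:=C(0,T;L^4(\Omega))$, mollifies $f$ and the data, solves the 2D MHD system \eqref{ub1} with external force $-\chi\nabla^\perp f^\epsilon$ (appealing to the classical Duvaut--Lions theory for MHD as a black box), then solves the linear transport equation \eqref{w1} for $w$ driven by the resulting $\mathbf u^\epsilon$; the map $F^\epsilon:f\mapsto w^\epsilon$ is shown to send $D$ into $D$ and to be continuous and compact for $T$ small enough, yielding a local fixed point, which is then extended globally via the a priori bounds of Proposition~\ref{uH1}.

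The trade-offs are as follows. The paper's decoupling avoids exactly the obstacle you flag---there is no need to preserve the $\mathbf g=\mathbf u-\mathbf v$ reformulation at the approximate level, since the approximate $(\mathbf u^\epsilon,\mathbf b^\epsilon)$ already solves a genuine (smooth-forced) MHD system to which standard regularity applies, and only elementary energy estimates on \eqref{ub1}--\eqref{w1} are used to close the Schauder map. The price is that Schauder only gives a \emph{local} solution, so a separate continuation argument is needed. Your approach, by contrast, builds globality directly into the approximations and makes the compactness/limit passage entirely routine once the uniform bounds are in hand; its only delicate point is the one you identify, namely ensuring that Lemmas~\ref{gH1}--\ref{gW2q} go through for the regularized system. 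Your suggestion to use artificial diffusion $\varepsilon\Delta w$ with a Neumann condition is the right fix for this, since it leaves the Stokes lift $\mathbf v$ and the system \eqref{mathg} intact up to harmless lower-order $\varepsilon$-terms.
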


\begin{proof}
The proof of the existence of weak solutions is based on Schauder's fixed point theorem.
To define the functional setting, we fix any $ T > 0 $ while the constant $ R_0 $ will be specified later. For notational convenience, we write
\begin{equation*}
  X \equiv C(0, T; L^4(\Omega)),
\end{equation*}
with $ \| f \|_X^2 = \| f \|_{C(0, T; L^4(\Omega))}^2 $,
and define $D = \{ f \in X | \|  f \|_X \leq R_0 \}.$ Clearly, $ D \subset X $ is closed and convex.

\vskip .1in
$ \bf Setting\,\,up $.
We fix $ \epsilon \in (0, 1) $ and define a continuous map on $ D $.
For any $ f \in D $, we regularize it and also the initial data $ (\mathbf u_0, w_0, \mathbf b_0) $ via the standard mollifying process,
$f^\epsilon = \rho^\epsilon * f,\,\mathbf u_0^\epsilon = \rho^\epsilon * \mathbf u_0,\,w_0^\epsilon = \rho^\epsilon * w_0,\,\mathbf b_0^\epsilon = \rho^\epsilon * \mathbf b_0,$
where $ \rho^\epsilon $ is a standard mollifier. Then it holds that
\begin{eqnarray}
\label{fepsilon}
&&f^\epsilon \in C(0, T; C_0^\infty(\Omega)),\quad
  \|f^\epsilon\|_{C(0, T; L^4(\Omega))}
     \leq \| f\|_{C(0, T; L^4(\Omega))}, \nonumber\\
&&\mathbf u_0^\epsilon \in C_0^\infty(\Omega),\quad
  \nabla \cdot \mathbf u_0^\epsilon = 0, \quad
  \|\mathbf u_0^\epsilon - \mathbf u_0\|_{H^1(\Omega)} < \epsilon,\nonumber\\
&&w_0^\epsilon \in C_0^\infty(\Omega),\quad
  \|w_0^\epsilon - w_0\|_{L^4({\Omega})} < \epsilon,\nonumber\\
&&\mathbf b_0^\epsilon \in C_0^\infty(\Omega),\quad
  \nabla \cdot \mathbf b_0^\epsilon = 0, \quad
  \|\mathbf b_0^\epsilon - \mathbf b_0\|_{H^1(\Omega)} < \epsilon \nonumber.
\end{eqnarray}

In fact, for the following 2D MHD system with smooth external forcing
$ \chi\nabla^\bot f^\epsilon $ and smooth initial data
$ (\mathbf u_0^\epsilon, \mathbf b_0^\epsilon)$
\begin{eqnarray}\label{ub1}
\left\{\begin{array}{ll}
  \p_t \mathbf u + \mathbf u \cdot \nabla \mathbf u - (\mu + \chi) \Delta \mathbf u  + \nabla p = \mathbf b \cdot \nabla \mathbf b - \chi \nabla^\bot f^\epsilon, \\
  \p_t \mathbf b + \mathbf u \cdot \nabla \mathbf b -\nu \Delta \mathbf b = \mathbf b \cdot \nabla \mathbf u,   \\
  \nabla \cdot \mathbf u = \nabla \cdot \mathbf b = 0,\quad \mathbf u|_{\p\Omega} = \mathbf b|_{\p\Omega} = 0,   \\
  (\mathbf u, \mathbf b)(x, 0) = (\mathbf u_0^\epsilon, \mathbf b_0^\epsilon)(x),
\end{array}\right.
\end{eqnarray}
it is proved \cite{DuLions} that such a system with $  f^\epsilon = 0 $ has a unique solution $ (\mathbf u^\epsilon, \mathbf b^\epsilon) $.
In fact, with minor modifications of the proof in \cite{DuLions}, it can be shown that the system \eqref{ub1} also has a unique solution $ (\mathbf u^\epsilon, \mathbf b^\epsilon) $ for any external forcing $ f^\epsilon \in C(0,T; C_0^\infty(\Omega)) $.

We then solve the following linearized equation with the smooth initial data $ w_0^\epsilon $
\begin{eqnarray}\label{w1}
\left\{\begin{array}{ll}
  \p_t w + \mathbf u^\epsilon \cdot \nabla w = -2\chi w + \chi\nabla^\bot \cdot\mathbf u^\epsilon,\\
  w(x, 0) = w_0^\epsilon(x),
\end{array}\right.
\end{eqnarray}
and denote the solution by $ w^\epsilon $.
This process allows us to define the map
\begin{equation}
  F^\epsilon(f) = w^\epsilon.
\end{equation}

In the following, we will prove that $ F^\epsilon $ satisfies the conditions of Schauder's fixed point theorem, that is, for any fixed $ \epsilon \in (0, 1) $,
$ F^\epsilon: D \rightarrow D $ is continuous and compact.
More precisely, one needs to show
\begin{itemize}
\item[(a)] $ \|w^\epsilon\|_X \leq R_0 $ for any $ f \in D $;
\item[(b)] $ \|w^{\epsilon}\|_{C(0,T; W^{1,4}(\Omega))} \leq C $ for any $ f \in D $;
\item[(c)] For any $ \eta >0 $, there exists $ \delta = \delta(\eta) > 0 $ such that for any $ f_1, f_2 \in D $
with $ \| f_1 - f_2 \|_X < \delta $, it holds that $ \|F^{\epsilon}({f}_1) - F^{\epsilon} ({f}_2)\|_X < \eta $.
\end{itemize}

\vskip .1in
We verify (a) first. Taking inner products of the first two equations of \eqref{ub1} with $ \mathbf u, \mathbf b $ respectively, and using $ -\chi \int_\Omega \nabla^\bot f^\epsilon \cdot \mathbf u \, dx = \chi \int_\Omega (\nabla^\bot \cdot \mathbf u) f^\epsilon \, dx $, we obtain
\begin{eqnarray*}
&&\f12 \f{d}{dt} (\|\mathbf u\|_{L^2(\Omega)}^2 + \|\mathbf b\|_{L^2(\Omega)}^2) + (\mu + \chi) \|\nabla \mathbf u\|_{L^2(\Omega)}^2 + \nu \|\nabla \mathbf b \|_{L^2(\Omega)}^2 \\
&=& \chi \int_\Omega (\nabla^\bot \cdot \mathbf u) f^\epsilon \, dx \leq \f{\mu+\chi}2 \|\nabla\mathbf u\|_{L^2(\Omega)}^2 + C \|f^\epsilon\|_{L^2(\Omega)}^2,
\end{eqnarray*}
and thus after integration over $ [0, T] $, we obtain
\begin{eqnarray} \label{ul2T}
&&\|\mathbf u^\epsilon\|_{L^2(\Omega)}^2 + \|\mathbf b^\epsilon\|_{L^2(\Omega)}^2 + (\mu + \chi) \int_0^T \| \nabla \mathbf u^\epsilon\|_{L^2(\Omega)}^2 \, dt + 2\nu \int_0^T \| \nabla \mathbf b^\epsilon \|_{L^2(\Omega)}^2 \, dt  \nonumber\\
&\leq& \|\mathbf u_0^\epsilon \|_{L^2(\Omega)}^2 + \|\mathbf b_0^\epsilon \|_{L^2}^2
      + C \int_0^T \| f^\epsilon \|_{L^2}^2\, dt \leq \| \mathbf u_0\|_{L^2(\Omega)}^2 + \| \mathbf b_0 \|_{L^2(\Omega)}^2
    + CT.
\end{eqnarray}

Then, taking inner products of the first two equations of \eqref{ub1} with
$ -\Delta \mathbf u $, $ -\Delta \mathbf b $ respectively, we get
\begin{eqnarray*}
&&\f12\f{d}{dt}(\|\nabla\mathbf u \|_{L^2(\Omega)}^2 + \|\nabla \mathbf b \|_{L^2(\Omega)}^2) + (\mu + \chi) \|\Delta \mathbf u\|_{L^2(\Omega)}^2 + \nu \|\Delta \mathbf b\|_{L^2(\Omega)}^2  \\
&=& \chi \int_\Omega \nabla^\bot f^\epsilon \cdot \Delta \mathbf u\, dx
   + 2 \int_{\Omega} \mathbf u \cdot \nabla \mathbf b \cdot \Delta \mathbf b \, dx,
\end{eqnarray*}
where we used the equalities 
$$\int_\Omega \mathbf u \cdot \nabla \mathbf u \cdot \Delta \mathbf u \, dx = 0,\,\,\, \int_\Omega \mathbf u \cdot \nabla \mathbf b \cdot \Delta \mathbf b \, dx
= - \int_\Omega \mathbf b \cdot \nabla \mathbf b \cdot \Delta \mathbf u \, dx- \int_\Omega \mathbf b \cdot \nabla \mathbf u \cdot \Delta \mathbf b \, dx.$$ 
By applying H\"{o}lder's inequality, Corollary \ref{C1} and Young's inequality, we get
\begin{eqnarray*}
&&\f12\f{d}{dt}(\|\nabla\mathbf u \|_{L^2(\Omega)}^2 + \|\nabla \mathbf b \|_{L^2(\Omega)}^2) + (\mu + \chi) \|\Delta \mathbf u\|_{L^2(\Omega)}^2 + \nu \|\Delta \mathbf b\|_{L^2(\Omega)}^2 \\
&\leq& \chi \|\nabla f^\epsilon\|_{L^2(\Omega)} \|\Delta \mathbf u\|_{L^2(\Omega)}
       + 2 \| \mathbf u \|_{L^\infty(\Omega)} \| \nabla \mathbf b \|_{L^2(\Omega)} \| \Delta \mathbf b \|_{L^2(\Omega)}\\
&\leq& \chi \|\nabla f^\epsilon\|_{L^2(\Omega)} \|\Delta \mathbf u\|_{L^2(\Omega)}
       + C (\| \mathbf u \|_{L^2(\Omega)}^{\f12} \| \Delta \mathbf u \|_{L^2(\Omega)}^{\f12} + \| \mathbf u \|_{L^2(\Omega)}) \| \nabla \mathbf b \|_{L^2(\Omega)} \| \Delta \mathbf b \|_{L^2(\Omega)}\\
&\leq& \f{\mu+\chi}2 \|\Delta \mathbf u\|_{L^2(\Omega)}^2 + \f{\nu}2 \|\Delta \mathbf b\|_{L^2(\Omega)}^2 + C\|\nabla f^\epsilon\|_{L^2(\Omega)}^2 + C \| \mathbf u \|_{L^2(\Omega)}^2 \| \nabla \mathbf b \|_{L^2(\Omega)}^4 + C \| \mathbf u \|_{L^2(\Omega)}^2 \| \nabla \mathbf b \|_{L^2(\Omega)}^2,
\end{eqnarray*}
which implies, after applying Gr\"{o}nwall's inequality and the inequality \eqref{ul2T}, that
\begin{eqnarray}\label{uR0}
&& \|\nabla \mathbf u^\epsilon \|_{L^2(\Omega)}^2 + \|\nabla\mathbf b^\epsilon \|_{L^2(\Omega)}^2 + (\mu + \chi) \int_0^T \|\Delta\mathbf u^\epsilon\|_{L^2}^2\, dt + \nu  \int_0^T \|\Delta\mathbf b^\epsilon\|_{L^2}^2\, dt \nonumber\\
&\leq& (\|\nabla \mathbf u_0^\epsilon \|_{L^2}^2 + \|\nabla \mathbf b_0^\epsilon \|_{L^2}^2 + C \int_0^T \|\nabla f^\epsilon\|_{L^2}^2 \, dt) \,
e^{\{C \int_0^T(\| \mathbf u^\epsilon \|_{L^2}^2 \| \nabla \mathbf b^\epsilon \|_{L^2}^2 + \| \mathbf u^\epsilon \|_{L^2}^2) \, dt\}}
\nonumber\\
&\leq& (\|\nabla \mathbf u_0\|_{L^2}^2
  + \|\nabla \mathbf b_0 \|_{L^2}^2
+ CT \| f^\epsilon \|_{C(0,T;C_0^\infty)}^2) \,
e^{\{C(\sup\limits_{0\leq t \leq T} \| \mathbf u^\epsilon \|_{L^2}^2) \int_0^T \| \nabla \mathbf b^\epsilon \|_{L^2}^2 \, dt + CT \sup\limits_{0\leq t \leq T} \| \mathbf u^\epsilon \|_{L^2}^2 \}} \nonumber\\
&\leq& (\|\nabla \mathbf u_0\|_{L^2}^2 + \|\nabla \mathbf b_0 \|_{L^2}^2 + CT )\,
e^{\{C (\| \mathbf u_0 \|_{L^2}^2 + \| \mathbf b_0 \|_{L^2}^2)^2+ C T^2\}}
\end{eqnarray}

In addition, by multiplying \eqref{w1} with $ |w|^2w $, using H\"{o}lder's inequality and Sobolev embedding inequalities, we obtain
\begin{eqnarray*}
&&\f14\f{d}{dt}\|w\|_{L^4(\Omega)}^4 + 2\chi \|w\|_{L^4(\Omega)}^4=\chi \int_{\Omega}\nabla^{\perp}\cdot {\mathbf u}^\epsilon {|w|^2 w}\,dx \\
&\leq& C \| \mathbf u^\epsilon\|_{W^{2,2}(\Omega)} \|w\|_{L^4(\Omega)}^3 \leq C \| \Delta \mathbf u^\epsilon\|_{L^2(\Omega)}\|w\|_{L^4(\Omega)}^3,
\end{eqnarray*}
which together with \eqref{uR0} imply
\begin{eqnarray}
&&\|w^\epsilon\|_{L^4(\Omega)} +
  2\chi \int_0^T\|w^\epsilon\|_{L^4(\Omega)}\,dt\leq \|w_0^\epsilon\|_{L^4(\Omega)} + C \int_0^T \| \Delta \mathbf u^\epsilon\|_{L^2(\Omega)}\, dt\notag\\
&\leq& \|w_0\|_{L^4(\Omega)} + C (\int_0^T \| \Delta \mathbf u^\epsilon\|_{L^2(\Omega)}^2 \, dt)^{\f12} (\int_0^T1^2 \, dt )^{\f12}  \notag\\
&\leq& \|w_0\|_{L^4(\Omega)} + C \sqrt{T}(\|\nabla \mathbf u_0\|_{L^2} + \|\nabla \mathbf b_0 \|_{L^2} + C \sqrt{T}) \,e^{\{C (\| \mathbf u_0 \|_{L^2}^2 + \| \mathbf b_0 \|_{L^2}^2)^2 + C T^2 \}}.\label{schaustep2}
\end{eqnarray}

To show that $ F^\epsilon $ maps $ D $ to $ D $,
it suffices to verify that the right hand side of \eqref{schaustep2} is bounded by $ R_0 $.
Then we obtain a condition for $ R_0 $, namely
\begin{eqnarray*}
&&\|w_0\|_{L^4(\Omega)} +
(C \sqrt{T}(\|\nabla \mathbf u_0\|_{L^2} + \|\nabla \mathbf b_0 \|_{L^2}) + C T) \,e^{\{C (\| \mathbf u_0 \|_{L^2}^2 + \| \mathbf b_0 \|_{L^2}^2)^2 + C T^2 \}} \,\leq \, R_0.
\end{eqnarray*}
Therefore, as long as $ T $ is sufficiently small, such that $ C \sqrt{T} \ll 1 $, $ C T \ll 1 $, $ CT^2 \ll 1 $, above inequality would hold. Similarly, we can show (b) and (c) under the condition that $T$ is sufficiently small. Schauder¡¯s fixed point theorem then allows us to conclude that the existence of a solution $({\mathbf u}^{\epsilon}, {w}^{\epsilon}, {\mathbf b}^{\epsilon})$ on a finite time interval $T$. The uniform estimates Proposition \ref{uH1} would allow us to pass the limit to obtain
a weak solution $ (\mathbf u, w, \mathbf b) $ to the system \eqref{eq1}--\eqref{eq20} in $ \Omega \times [0, T] $ for any $ T>0 $.
Due to the global bounds obtained in Proposition \ref{uH1}, the local solution produced by Schauder's fixed point theorem can be extended into a global solution via Picard type extension theorem. Hence this allows us to obtain the desired global weak solutions $ (\mathbf u, w, \mathbf b) $.
\end{proof}

\section{Global strong solution}
\label{strong solutions}
\setcounter{equation}{0}

This section is devoted to establishing the global strong solutions and proving the uniqueness of strong solutions to the system \eqref{eq1}--\eqref{eq20}.
~\\
\subsection{Existence of strong solutions}
~\\
\label{section 4.1}

In this subsection, we establish the global existence of strong solutions to the system \eqref{eq1}--\eqref{eq20} in the following theorem.

\begin{theorem}\label{T3}
Let $ \Omega \subset \R^2 $ be a bounded domain with smooth boundary.
Suppose that the initial data $ ({\mathbf u}_0, w_0, {\mathbf b}_0) $ satisfies
\begin{equation*}
{\mathbf u}_0 \in H_0^1(\Omega) \cap H^{2}(\Omega), \quad
 w_0\in W^{1,4}(\Omega),\quad
{\mathbf b}_0 \in H_0^1(\Omega),
\end{equation*}
then the system \eqref{eq1}--\eqref{eq20} admits strong solutions
$ ({\mathbf u}, w, {\mathbf b}) $ globally in time.

\end{theorem}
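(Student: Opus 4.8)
The plan is to promote the global weak solution of Theorem \ref{T2} to a strong solution by supplying the missing second-order regularity through the $t$-weighted estimates of Proposition \ref{ubtH2}. Theorem \ref{T2} already gives $\mathbf u \in L^\infty(0,T;H_0^1)\cap L^2(0,T;W^{2,4})$, $w\in L^\infty(0,T;W^{1,4})$ and $\mathbf b\in L^\infty(0,T;H_0^1)\cap L^2(0,T;H^2)$, together with the bounds in Proposition \ref{uH1}; what remains is $L^\infty$-in-time control of $\nabla^2\mathbf g$, $\nabla^2\mathbf b$ and $\p_t\mathbf g$, $\p_t\mathbf b$ near the initial time. Since no compatibility condition is assumed, these cannot be bounded uniformly down to $t=0$, so the weight $t$ is built in, which is exactly the content of Proposition \ref{ubtH2}.

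First I would differentiate \eqref{mathg} and $\eqref{eq1}_3$ in time, test against $t\,\p_t\mathbf g$ and $t\,\p_t\mathbf b$, and add. This produces $\tfrac{d}{dt}\big[t(\|\p_t\mathbf g\|_{L^2}^2+\|\p_t\mathbf b\|_{L^2}^2)\big]$ and $t(\|\nabla\p_t\mathbf g\|_{L^2}^2+\|\nabla\p_t\mathbf b\|_{L^2}^2)$ on the left, against $\|\p_t\mathbf g\|_{L^2}^2+\|\p_t\mathbf b\|_{L^2}^2$ (from differentiating the weight, integrable in time by Proposition \ref{uH1}) and the terms of $\p_t\mathbf Q$ on the right; the latter involve $\p_t(\mathbf u\cdot\nabla\mathbf u)$, $\p_t(\mathbf b\cdot\nabla\mathbf b)$ and $A^{-1}\nabla^\perp$ applied to $\p_t(\mathbf u\cdot\nabla w)$, $\nabla^\perp\cdot\p_t\mathbf u$ and $\p_t w$. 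Using Corollary \ref{C1}, \eqref{keyestimates}, and the $L^2(0,T;W^{2,4})$ and $L^2(0,T;H^2)$ bounds on $\mathbf u,\mathbf b$ from Proposition \ref{uH1} as integrable multipliers, each right-hand term is split so that its top-order factor is absorbed into a small multiple of $t(\|\nabla\p_t\mathbf g\|_{L^2}^2+\|\nabla\p_t\mathbf b\|_{L^2}^2)$; Gr\"onwall's inequality then yields
$$
\sup_{0\le t\le T}t\big(\|\p_t\mathbf g\|_{L^2}^2+\|\p_t\mathbf b\|_{L^2}^2\big)+\int_0^T t\big(\|\nabla\p_t\mathbf g\|_{L^2}^2+\|\nabla\p_t\mathbf b\|_{L^2}^2\big)\,dt\le C
$$
with $C$ independent of $\|\mathbf b_0\|_{H^2}$. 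Plugging this into the Stokes estimate \eqref{Stokes1} for \eqref{mathg} and the analogous estimate for $\eqref{eq1}_3$ gives $\sup_{0\le t\le T}t(\|\nabla^2\mathbf g\|_{L^2}^2+\|\nabla^2\mathbf b\|_{L^2}^2)\le C$, and $\mathbf u=\mathbf g+\mathbf v$ together with \eqref{keyestimates} and Lemma \ref{nablaw-Lp} upgrades this to $\sqrt t\,\mathbf u,\ \sqrt t\,\mathbf b\in L^\infty(0,T;H^2)$.

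With all the bounds of Theorem \ref{T2} and Proposition \ref{ubtH2} in hand, every term of \eqref{eq1} lies in the asserted function spaces, so the system holds a.e.\ in $\Omega\times(0,T)$ and $(\mathbf u,w,\mathbf b)$ is a strong solution; since these bounds are finite for every $T>0$, the solution is global. The main obstacle is the bookkeeping of the time-differentiated nonlinearities: each product in $\p_t\mathbf Q$ and in the $\mathbf b\cdot\nabla\mathbf b$, $\mathbf u\cdot\nabla\mathbf u$ couplings must be split so that its highest-order factor lands on $\nabla\p_t\mathbf g$ or $\nabla\p_t\mathbf b$ while the remaining factor supplies an $L^1_t$ multiplier coming from Proposition \ref{uH1}, and it is crucial that the weight $t$ annihilate the contribution of $\|\nabla^2\mathbf b_0\|$ — which is not assumed finite — when integrating near $t=0$. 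Everything else reduces to Stokes regularity theory (Lemmas \ref{stokes}, \ref{timestokes}) and Gr\"onwall's inequality.
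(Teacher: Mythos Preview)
Your proposal is correct and follows essentially the same route as the paper: differentiate the $\mathbf g$- and $\mathbf b$-equations in time, test with $\p_t\mathbf g$, $\p_t\mathbf b$, multiply by $t$ (the paper does this multiplication after deriving the unweighted differential inequality, while you build the weight into the test function, which is equivalent), apply Gr\"onwall using the $L^2_t$ bounds on $\p_t\mathbf g$, $\p_t\mathbf b$ from Proposition \ref{uH1}, feed the result into the stationary Stokes estimate to recover $t\|\nabla^2\mathbf g\|_{L^2}^2$ and $t\|\nabla^2\mathbf b\|_{L^2}^2$, and finally pass from $\mathbf g$ to $\mathbf u$ via \eqref{keyestimates}. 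The bookkeeping you flag---splitting each term of $\p_t\mathbf Q$ so its top-order factor is absorbed and the rest is an $L^1_t$ multiplier---is exactly what the paper carries out term by term.
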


To prove Theorem \ref{T3}, we focus on establishing the $t$-weighted $ H^2 $ estimates as follows.
\begin{proposition} \label{ubtH2}
  Let $ \Omega \subset \R^2 $ be a bounded domain with smooth boundary.
Assume the initial data
$ {\mathbf u}_0 \in H_0^1(\Omega) \cap H^{2}(\Omega) $,
$ {\mathbf b}_0 \in H_0^1(\Omega) $,
$ w_0\in W^{1,4}(\Omega) $ and a triple $ ({\mathbf u}, w, {\mathbf b}) $ is the smooth solution of the system \eqref{eq1}--\eqref{eq20}.
Then it holds that
\begin{eqnarray*}
\sup\limits_{0 \leq t\leq T} \,
 t(\|\nabla^2\mathbf u\|_{L^2(\Omega)}^2 + \|\nabla^2\mathbf b\|^2_{L^2(\Omega)})
 + \int_0^T t(\|\nabla\p_t{\mathbf u}\|_{L^2(\Omega)}^2 + \|\nabla\p_t{\mathbf b}\|_{L^2(\Omega)}^2) \, dt
\leq C,
\end{eqnarray*}
where the constant $ C $ depends only on $\Omega$, $ T $, $ \|{\mathbf u}_0\|_{H^{2}(\Omega)} $, $ \|{\mathbf b}_0\|_{H^{1}(\Omega)} $, $ \|w_0\|_{W^{1,4}(\Omega)} $.
\end{proposition}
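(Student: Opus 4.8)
The plan is to derive \emph{$t$-weighted} a priori estimates at the level of time derivatives, following the device of \cite{Li2017}: since the weight $t$ vanishes at $t=0$, no bound on $\nabla^2\mathbf u_0$ or $\nabla^2\mathbf b_0$ is ever invoked, which is exactly why the resulting constant is independent of $\|\mathbf b_0\|_{H^2(\Omega)}$. We argue with the smooth solution and freely use Proposition \ref{uH1} and Lemmas \ref{gH1}, \ref{w-Lp}, \ref{gW2q}, \ref{nablaw-Lp}, \ref{ptw}, \ref{ptw4}; thus $\mathbf u,\mathbf b\in L^\infty(0,T;H^1)$, $\mathbf u\in L^2(0,T;W^{2,4})$, $\mathbf g\in L^2(0,T;W^{2,q})$ for all finite $q$, $w\in L^\infty(0,T;W^{1,4})\cap L^\infty(0,T;L^\infty)$, $\p_t\mathbf g,\p_t\mathbf b\in L^2(0,T;L^2)$ and $\p_t w\in L^\infty(0,T;L^2)\cap L^2(0,T;L^4)$ are all available. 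First I would differentiate the $\mathbf g$-system \eqref{mathg} in $t$ and test with $t\,\p_t\mathbf g$ --- the pressure term drops since $\nabla\cdot\p_t\mathbf g=0$ and $\p_t\mathbf g|_{\p\Omega}=0$, and the contribution of $-\mathbf u\cdot\nabla\p_t\mathbf g$ inside $\p_t\mathbf Q$ vanishes by $\nabla\cdot\mathbf u=0$ --- and likewise differentiate $\eqref{eq1}_3$ in $t$ and test with $t\,\p_t\mathbf b$ (so that $\int_\Omega(\mathbf u\cdot\nabla\p_t\mathbf b)\cdot\p_t\mathbf b\,dx=0$). Using $\tfrac t2\tfrac{d}{dt}\|\cdot\|_{L^2}^2=\tfrac12\tfrac{d}{dt}(t\|\cdot\|_{L^2}^2)-\tfrac12\|\cdot\|_{L^2}^2$, combining the $\mathbf g$-identity (times $\nu$) with the $\mathbf b$-identity (times $\mu+\chi$) as in Steps 1--4 of Lemma \ref{gH1}, and integrating over $[0,t]$, one reaches
\begin{align*}
&t\big(\|\p_t\mathbf g\|_{L^2(\Omega)}^2+\|\p_t\mathbf b\|_{L^2(\Omega)}^2\big)+c\int_0^t s\big(\|\nabla\p_t\mathbf g\|_{L^2(\Omega)}^2+\|\nabla\p_t\mathbf b\|_{L^2(\Omega)}^2\big)\,ds\\
&\qquad\le C\int_0^t\big(\|\p_t\mathbf g\|_{L^2(\Omega)}^2+\|\p_t\mathbf b\|_{L^2(\Omega)}^2\big)\,ds+C\int_0^t s\,\mathcal R(s)\,ds,
\end{align*}
with $c=\min\{\mu+\chi,\nu\}$, where $\mathcal R$ gathers $\int_\Omega\p_t\mathbf Q\cdot\p_t\mathbf g\,dx$ and the magnetic couplings $\int_\Omega(\p_t\mathbf u\cdot\nabla\mathbf b)\cdot\p_t\mathbf b\,dx$, $\int_\Omega(\p_t\mathbf b\cdot\nabla\mathbf u)\cdot\p_t\mathbf b\,dx$, $\int_\Omega(\mathbf b\cdot\nabla\p_t\mathbf u)\cdot\p_t\mathbf b\,dx$. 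The first term on the right is bounded by Lemma \ref{gH1}.

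Next I would estimate $\mathcal R$ by splitting $\p_t\mathbf Q$ along the definition of $\mathbf Q$, writing $\mathbf u=\mathbf g+\mathbf v$, $\p_t\mathbf u=\p_t\mathbf g+\p_t\mathbf v$. The genuinely new contributions are $A^{-1}\nabla^\perp(\mathbf u\cdot\nabla\p_t w)$ and $A^{-1}\nabla^\perp(\p_t\mathbf u\cdot\nabla w)$: using $\nabla\cdot\mathbf u=\nabla\cdot\p_t\mathbf u=0$ I would write these as $A^{-1}\nabla^\perp\nabla\cdot(\mathbf u\,\p_t w)$ and $A^{-1}\nabla^\perp\nabla\cdot(\p_t\mathbf u\,w)$, recast $\nabla^\perp\nabla\cdot(\cdot)$ in the divergence-of-potential form required by \eqref{Stokes3}, and obtain the $L^2$-bounds $C\|\mathbf u\|_{L^4}\|\p_t w\|_{L^4}$ and $C\|w\|_{L^\infty}\|\p_t\mathbf u\|_{L^2}$ (via Lemmas \ref{w-Lp}, \ref{ptw}, \ref{ptw4}); the remaining rotational term $A^{-1}\nabla^\perp(\nabla^\perp\cdot\p_t\mathbf u)$ and the term $2\chi\p_t\mathbf v$ are controlled by \eqref{Stokes1}--\eqref{Stokes2} together with $\|\p_t\mathbf v\|_{W^{1,2}(\Omega)}\le C\|\p_t w\|_{L^2(\Omega)}$ (differentiate \eqref{stokesv} in $t$; cf.\ \eqref{keyestimates}), bounded by Lemma \ref{ptw}, whence also $\|\p_t\mathbf v\|_{L^4}\le C\|\p_t w\|_{L^2}$. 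The convective terms $\p_t\mathbf u\cdot\nabla\mathbf u$, $\mathbf u\cdot\nabla\p_t\mathbf u$, $\p_t\mathbf b\cdot\nabla\mathbf b$, $\mathbf b\cdot\nabla\p_t\mathbf b$ from $\p_t\mathbf Q$, and the three magnetic couplings, are handled by H\"older's inequality, Corollary \ref{C1} and \eqref{keyestimates} so that every factor carrying a spatial derivative of $\p_t\mathbf g$ or $\p_t\mathbf b$ enters with total exponent strictly below $2$ and is absorbed, via Young's inequality, into $c\int_0^t s(\|\nabla\p_t\mathbf g\|_{L^2}^2+\|\nabla\p_t\mathbf b\|_{L^2}^2)\,ds$. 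What is left has the form $\int_0^t\phi(s)\,s\big(\|\p_t\mathbf g\|_{L^2}^2+\|\p_t\mathbf b\|_{L^2}^2\big)\,ds+\int_0^t\psi(s)\,ds$, where $\phi$ is built from the $L^\infty_t$-norms and the $L^2_t$ $W^{2,q}$-norms of $\mathbf u$ and $\mathbf g$ (Proposition \ref{uH1}, Lemmas \ref{w-Lp}, \ref{gW2q}, \ref{nablaw-Lp}) and so lies in $L^1(0,T)$, and $\psi\in L^1(0,T)$ is built from the same quantities together with $\|\p_t\mathbf g\|_{L^2}^2$, $\|\p_t\mathbf b\|_{L^2}^2$, $\|\p_t w\|_{L^2}^2$ and $\|\p_t w\|_{L^4}^2$.

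Then, setting $E(t)=t(\|\p_t\mathbf g\|_{L^2(\Omega)}^2+\|\p_t\mathbf b\|_{L^2(\Omega)}^2)$ with $E(0)=0$, the two previous paragraphs give $E(t)+c\int_0^t s(\|\nabla\p_t\mathbf g\|_{L^2}^2+\|\nabla\p_t\mathbf b\|_{L^2}^2)\,ds\le C+C\int_0^t\phi(s)E(s)\,ds$, so Gr\"onwall's inequality yields $\sup_{[0,T]}E(t)+\int_0^T t(\|\nabla\p_t\mathbf g\|_{L^2}^2+\|\nabla\p_t\mathbf b\|_{L^2}^2)\,dt\le C$. Applying Lemma \ref{stokes} to \eqref{mathg} and to $\eqref{eq1}_3$ and bounding $\|\mathbf Q\|_{L^2}^2$ exactly as in \eqref{A105.1}--\eqref{A106.1} (so that, after combining the two inequalities with weights $\nu$ and $\mu+\chi$, the $\nabla^2\mathbf g$ and $\nabla^2\mathbf b$ contributions are reabsorbed) gives $\|\nabla^2\mathbf g\|_{L^2}^2+\|\nabla^2\mathbf b\|_{L^2}^2\le C(\|\p_t\mathbf g\|_{L^2}^2+\|\p_t\mathbf b\|_{L^2}^2)+C$; multiplying by $t$ and using $E\le C$ gives $\sup_{[0,T]}t(\|\nabla^2\mathbf g\|_{L^2}^2+\|\nabla^2\mathbf b\|_{L^2}^2)\le C$. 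Finally, since $\mathbf u=\mathbf g+\mathbf v$ with $\|\nabla^2\mathbf v\|_{L^2}\le C\|\nabla w\|_{L^2}$ by \eqref{Stokes1}, bounded by Lemma \ref{nablaw-Lp}, and $\|\nabla\p_t\mathbf v\|_{L^2}\le C\|\p_t w\|_{L^2}$ bounded by Lemma \ref{ptw}, one gets $\sup_{[0,T]}t\|\nabla^2\mathbf u\|_{L^2}^2\le C$ and $\int_0^T t\|\nabla\p_t\mathbf u\|_{L^2}^2\,dt\le C$, which is the assertion of Proposition \ref{ubtH2}.

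The hard part is the estimate of $\mathcal R$: each term of $\p_t\mathbf Q$ and of the magnetic coupling must be distributed over $\mathbf g$, $\mathbf v$, $\p_t\mathbf g$, $\p_t\mathbf v$ so that the surviving derivative-of-time-derivative factors have total exponent below $2$ \emph{and}, simultaneously, so that the leftover coefficients are time-integrable; this is where the preliminary bounds $w\in L^\infty_tL^\infty$, $\nabla w\in L^\infty_tL^q$, $\p_t w\in L^\infty_tL^2\cap L^2_tL^4$ and $\mathbf g\in L^2_tW^{2,q}$ become indispensable. The new rotational terms $A^{-1}\nabla^\perp(\mathbf u\cdot\nabla\p_t w)$ and $A^{-1}\nabla^\perp(\p_t\mathbf u\cdot\nabla w)$, which do not occur in the purely micropolar case \cite{LW2018}, force the use of the smoothing of $A^{-1}\nabla^\perp\nabla\cdot(\cdot)$ in \eqref{Stokes3}, and the magnetic coupling requires running the $\mathbf g$- and $\mathbf b$-estimates simultaneously and combining them with the weights $\nu$ and $\mu+\chi$ before applying Gr\"onwall, exactly as in Lemma \ref{gH1}.
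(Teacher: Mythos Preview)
Your proposal is correct and follows essentially the same route as the paper: differentiate the $\mathbf g$-system and the $\mathbf b$-equation in $t$, test with $\p_t\mathbf g$ and $\p_t\mathbf b$, attach the weight $t$, close by Gr\"onwall using the $L^2_t$ bounds on $\p_t\mathbf g,\p_t\mathbf b$ from Lemma~\ref{gH1}, then recover $\nabla^2\mathbf g,\nabla^2\mathbf b$ from the stationary Stokes estimate and finally pass from $\mathbf g$ to $\mathbf u$ via $\mathbf u=\mathbf g+\mathbf v$ and \eqref{keyestimates}. The only cosmetic differences are that the paper tests with $\p_t\mathbf g$ first and multiplies by $t$ afterward (rather than testing directly with $t\,\p_t\mathbf g$), handles the $A^{-1}\nabla^\perp\nabla\cdot$ terms in $L^{4/3}$ (pairing with $\|\p_t\mathbf g\|_{L^4}$) rather than in $L^2$, and simply adds the $\mathbf g$- and $\mathbf b$-inequalities instead of weighting them by $\nu$ and $\mu+\chi$; none of this changes the argument. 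One minor inaccuracy in your commentary: the terms $A^{-1}\nabla^\perp(\mathbf u\cdot\nabla\p_t w)$ and $A^{-1}\nabla^\perp(\p_t\mathbf u\cdot\nabla w)$ already appear in the micropolar case \cite{LW2018}, since they come from $\mathbf u\cdot\nabla w$ in $\mathbf Q$; the genuinely new contributions due to $\mathbf b$ are the $\p_t(\mathbf b\cdot\nabla\mathbf b)$ pieces in $\p_t\mathbf Q$ and the three magnetic couplings in the $\mathbf b$-equation, which you do treat correctly.
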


\begin{proof}
 $\bf Step ~1 $.
First, differentiating $ \eqref{mathg}_1 $ with respect to $ t $, and then multiplying it with $ \p_t{\mathbf g} $, we obtain
 \begin{eqnarray}\label{ut21}
&& \f12\f{d}{dt}\|\p_t{\mathbf g}\|_{L^2(\Omega)}^2
     + (\mu + \chi)\| \nabla \p_t {\mathbf g} \|_{L^2(\Omega)}^2=\int_{\Omega}\p_t{\mathbf Q} \cdot \p_t{\mathbf g} dt   \nonumber\\
&=& - \int_{\Omega} \p_t{\mathbf u} \cdot \nabla{\mathbf u} \cdot
    \p_t{\mathbf g} \,dx - \int_{\Omega} {\mathbf u} \cdot \nabla\p_t{\mathbf u} \cdot \p_t{\mathbf g} \,dx +  \int_{\Omega} \p_t{\mathbf b} \cdot \nabla{\mathbf b} \cdot \p_t{\mathbf g} \,dx \nonumber\\
 && +  \int_{\Omega} {\mathbf b}\cdot\nabla\p_t{\mathbf b}\cdot\p_t{\mathbf g}\,dx
    -\f{\chi}{\mu + \chi} \int_{\Omega} A^{-1}\nabla^{\bot}\nabla\cdot(w\p_t{\mathbf u} + {\mathbf u}\p_tw)\cdot\p_t{\mathbf g} \,dx \nonumber\\
 && + \f{\chi^2}{\mu + \chi} \int_{\Omega} A^{-1}\nabla^{\bot}(\nabla^{\bot}\cdot\p_t{\mathbf u})\cdot\p_t{\mathbf g} \,dx
 + 2\chi \int_{\Omega}\p_t{\mathbf v}\cdot\p_t{\mathbf g} \,dx= \sum_{i=1}^7 I_i.
 \end{eqnarray}

Then we will estimate the seven terms one by one. By applying H\"{o}lder inequality, Corollary \ref{C1}, Young's inequality and (\ref{keyestimates}), it holds that
\begin{eqnarray*}
\label{I1}
I_1
&\leq&  \|\p_t{\mathbf u}\|_{L^4(\Omega)} \|\nabla{\mathbf u}\|_{L^2(\Omega)}
     \|\p_t{\mathbf g}\|_{L^4(\Omega)}    \nonumber\\
&\leq& C \|\p_t{\mathbf u}\|_{L^2(\Omega)}^{\f12}
       \|\nabla\p_t{\mathbf u}\|_{L^2(\Omega)}^{\f12} \|\nabla{\mathbf u}\|_{L^2(\Omega)} \|\p_t{\mathbf g}\|_{L^2(\Omega)}^{\f12} \|\nabla\p_t{\mathbf g}\|_{L^2(\Omega)}^{\f12}  \nonumber\\
&\leq& C \|\p_t{\mathbf u}\|_{L^2(\Omega)}
       \|\nabla\p_t{\mathbf u}\|_{L^2(\Omega)}
       \|\nabla{\mathbf u}\|_{L^2(\Omega)}^2
       + C \|\p_t{\mathbf g}\|_{L^2(\Omega)} \|\nabla\p_t{\mathbf g}\|_{L^2(\Omega)}   \nonumber\\
&\leq& C(\|\p_t{\mathbf g}\|_{L^2(\Omega)} + \|\p_tw\|_{L^2(\Omega)})
        (\|\nabla\p_t{\mathbf g}\|_{L^2(\Omega)} + \|\p_tw\|_{L^2(\Omega)}) \|\nabla{\mathbf u}\|_{L^2(\Omega)}^2  \nonumber\\ 
        &&+ C \|\p_t{\mathbf g}\|_{L^2(\Omega)} \|\nabla\p_t{\mathbf g}\|_{L^2(\Omega)}   \nonumber\\
&\leq& \f{\mu + \chi}{14} \|\nabla\p_t{\mathbf g}\|_{L^2(\Omega)}^2
      +C \|\p_t{\mathbf g}\|_{L^2(\Omega)}^2 \|\nabla {\mathbf u}\|_{L^2(\Omega)}^4 + C\|\p_tw\|_{L^2(\Omega)}^2 \nonumber\\
     && + C \|\p_tw\|_{L^2(\Omega)}^2 \|\nabla{\mathbf u}\|_{L^2(\Omega)}^4
        + C \|\p_tw\|_{L^2(\Omega)}^2 \|\nabla{\mathbf u}\|_{L^2(\Omega)}^2
        + C \|\p_t{\mathbf g}\|_{L^2(\Omega)}^2,\\
I_2
&\leq&  \|\mathbf u\|_{L^4(\Omega)} \|\nabla\p_t{\mathbf u}\|_{L^2(\Omega)}
       \|\p_t{\mathbf g}\|_{L^4(\Omega)}    \nonumber\\
&\leq& C \|\mathbf u\|_{L^2(\Omega)}^{\f12}
       \|\nabla {\mathbf u}\|_{L^2(\Omega)}^{\f12}
       (\|\nabla\p_t{\mathbf g}\|_{L^2(\Omega)} + \|\p_t w\|_{L^2(\Omega)})
       \|\p_t{\mathbf g}\|_{L^2(\Omega)}^{\f12}
       \|\nabla \p_t{\mathbf g}\|_{L^2(\Omega)}^{\f12} \nonumber\\
&\leq& \f{\mu + \chi}{14} \|\nabla\p_t{\mathbf g}\|_{L^2(\Omega)}^2
      + C \|\mathbf u\|_{L^2(\Omega)}^2 \|\nabla {\mathbf u}\|_{L^2(\Omega)}^2 \|\p_t{\mathbf g}\|_{L^2(\Omega)}^2 + C \|\p_tw\|_{L^2(\Omega)}^2, \\
I_3 + I_4
&\leq&  \|\p_t{\mathbf b}\|_{L^4(\Omega)} \|\nabla {\mathbf b}\|_{L^2(\Omega)}
         \|\p_t{\mathbf g}\|_{L^4(\Omega)}
        + \|\mathbf b\|_{L^4(\Omega)} \|\nabla\p_t{\mathbf b}\|_{L^2(\Omega)} \|\p_t{\mathbf g}\|_{L^4(\Omega)}   \nonumber\\
&\leq& C \|\p_t{\mathbf b}\|_{L^2(\Omega)}^{\f12}
       \|\nabla\p_t{\mathbf b}\|_{L^2(\Omega)}^{\f12}
       \|\nabla{\mathbf b}\|_{L^2(\Omega)}
       \|\p_t{\mathbf g}\|_{L^2(\Omega)}^{\f12}
       \|\nabla\p_t{\mathbf g}\|_{L^2(\Omega)}^{\f12}  \nonumber\\
     &&+ C \|\mathbf b\|_{L^2(\Omega)}^{\f12}
       \|\nabla {\mathbf b}\|_{L^2(\Omega)}^{\f12}
       \|\nabla \p_t{\mathbf b}\|_{L^2(\Omega)}
       \|\p_t{\mathbf g}\|_{L^2(\Omega)}^{\f12}
       \|\nabla \p_t{\mathbf g}\|_{L^2(\Omega)}^{\f12} \nonumber\\
&\leq&  \f{\mu + \chi}7 \|\nabla \p_t{\mathbf g}\|_{L^2(\Omega)}^2
       + \f{\nu}5 \|\nabla \p_t{\mathbf b}\|_{L^2(\Omega)}^2
       + C \|\p_t{\mathbf b}\|_{L^2(\Omega)}^2
           \|\nabla{\mathbf b}\|_{L^2(\Omega)}^4\nonumber\\
     &&  + C \|\p_t{\mathbf g}\|_{L^2(\Omega)}^2
       + C \|\mathbf b\|_{L^2(\Omega)}^2 \|\nabla\mathbf b\|_{L^2(\Omega)}^2
           \|\p_t{\mathbf g}\|_{L^2(\Omega)}^2, \\
I_5
&\leq& C (\|w\p_t{\mathbf u}\|_{L^{\f43}(\Omega)}
        + \|{\mathbf u}\p_tw\|_{L^{\f43}(\Omega)})
         \|\p_t{\mathbf g}\|_{L^4(\Omega)}    \nonumber\\
&\leq& C (\|w\|_{L^4(\Omega)} \|\p_t\mathbf u\|_{L^2(\Omega)}
        + \|\mathbf u\|_{L^4(\Omega)} \|\p_tw\|_{L^2(\Omega)})
         \|\p_t{\mathbf g}\|_{L^2(\Omega)}^{\f12}
         \|\nabla\p_t{\mathbf g}\|_{L^2(\Omega)}^{\f12} \nonumber\\
&\leq& C \|w\|_{L^4(\Omega)} (\|\p_t{\mathbf g}\|_{L^2(\Omega)}
          + \|\p_tw\|_{L^2(\Omega)})\|\p_t{\mathbf g}\|_{L^2(\Omega)}^{\f12}
         \|\nabla\p_t{\mathbf g}\|_{L^2(\Omega)}^{\f12}  \nonumber\\
       && + C \|\mathbf u\|_{L^2(\Omega)}^{\f12}
         \|\nabla {\mathbf u}\|_{L^2(\Omega)}^{\f12} \|\p_tw\|_{L^2(\Omega)} \|\p_t{\mathbf g}\|_{L^2(\Omega)}^{\f12}
         \|\nabla\p_t{\mathbf g}\|_{L^2(\Omega)}^{\f12} \nonumber\\
&\leq& \f{\mu + \chi}{14} \|\nabla\p_t{\mathbf g}\|_{L^2(\Omega)}^2
       + C \|w\|_{L^4(\Omega)}^{\f43} \|\p_t {\mathbf g}\|_{L^2(\Omega)}^2
       + C \|w\|_{L^4(\Omega)}^2 \|\p_t w\|_{L^2(\Omega)}^2 \nonumber\\
     &&+ C \|\p_t{\mathbf g}\|_{L^2(\Omega)}^2
       + C \|\mathbf u\|_{L^2(\Omega)}^2 \|\nabla {\mathbf u}\|_{L^2(\Omega)}^2 \|\p_t{\mathbf g}\|_{L^2(\Omega)}^2 + C \|\p_t w\|_{L^2(\Omega)}^2, \\
\label{I67}
I_6 + I_7
&\leq& C \|\p_t\mathbf u\|_{L^2(\Omega)} \|\p_t\mathbf g\|_{L^2(\Omega)}
       + C \|\p_t\mathbf v\|_{L^2(\Omega)} \|\p_t\mathbf g\|_{L^2(\Omega)} \nonumber\\
&\leq& C (\|\p_t\mathbf g\|_{L^2(\Omega)} + \|\p_t w\|_{L^2(\Omega)})
        \|\p_t\mathbf g\|_{L^2(\Omega)}
        + C \|\p_t w\|_{L^2(\Omega)} \|\p_t\mathbf g\|_{L^2(\Omega)} \nonumber\\
&\leq& C \|\p_t\mathbf g\|_{L^2(\Omega)}^2 + C \|\p_t w\|_{L^2(\Omega)}^2.
\end{eqnarray*}
Plugging the estimates of $ I_i $, $ i = 1, 2,\cdots, 7 $, into \eqref{ut21}, we obtain
\begin{eqnarray}\label{gH2}
&& \f12\f{d}{dt}\|\p_t{\mathbf g}\|_{L^2(\Omega)}^2
     + \f{9(\mu + \chi)}{14}\| \nabla \p_t {\mathbf g} \|_{L^2(\Omega)}^2
        \nonumber\\
&\leq& C \|\p_t\mathbf g\|_{L^2(\Omega)}^2
         (1 + \|\nabla \mathbf u\|_{L^2(\Omega)}^4
             + \|\mathbf u\|_{L^2}^2 \|\nabla\mathbf u\|_{L^2}^2
     + \|\mathbf b\|_{L^2}^2 \|\nabla\mathbf b\|_{L^2}^2
     + \|w\|_{L^4(\Omega)}^{\f43})+ \f{\nu}5\|\nabla\p_t\mathbf b\|_{L^2(\Omega)}^2\nonumber\\
    &&+ C \|\p_t w\|_{L^2(\Omega)}^2 (1+\|\nabla \mathbf u\|_{L^2(\Omega)}^4  + \|\nabla \mathbf u\|_{L^2(\Omega)}^2 + \|w\|_{L^4(\Omega)}^2)
  + C \|\p_t\mathbf b\|_{L^2(\Omega)}^2\|\nabla \mathbf b\|_{L^2(\Omega)}^4.
\end{eqnarray}

$ \bf Step ~2 $.
Differentiating $ \eqref{eq1}_3 $ with respect to $ t $, and multiplying it with $ \p_t{\mathbf b} $, we can deduce
\begin{eqnarray}\label{bt21}
&&\f12\f{d}{dt} \|\p_t\mathbf b\|_{L^2(\Omega)}^2
     + \nu \|\nabla \p_t\mathbf b\|_{L^2(\Omega)}^2\nonumber\\
&=&-\int_{\Omega}\p_t\mathbf u \cdot \nabla\mathbf b \cdot \p_t\mathbf b dx+ \int_{\Omega} \p_t\mathbf b \cdot \nabla\mathbf u \cdot \p_t\mathbf b dx
      + \int_{\Omega} \mathbf b \cdot \nabla\p_t\mathbf u \cdot \p_t\mathbf b dx = \sum_{i=1}^3 J_i.
\end{eqnarray}

By making use of the same tools in the Step 1, we get
\begin{eqnarray}
J_1
&\leq& \|\p_t\mathbf u\|_{L^4(\Omega)} \|\nabla\mathbf b\|_{L^2(\Omega)}
       \|\p_t\mathbf b\|_{L^4(\Omega)}  \nonumber\\
&\leq& C \|\p_t\mathbf u\|_{L^2(\Omega)}^{\f12}
         \|\nabla\p_t\mathbf u\|_{L^2(\Omega)}^{\f12}
         \|\nabla\mathbf b\|_{L^2(\Omega)}
         \|\p_t\mathbf b\|_{L^2(\Omega)}^{\f12}
         \|\nabla\p_t\mathbf b\|_{L^2(\Omega)}^{\f12} \nonumber\\
&\leq& C \|\p_t\mathbf u\|_{L^2(\Omega)}
         \|\nabla\p_t\mathbf u\|_{L^2(\Omega)}
         \|\nabla\mathbf b\|_{L^2(\Omega)}^2
        + C \|\p_t\mathbf b\|_{L^2(\Omega)}
          \|\nabla\p_t\mathbf b\|_{L^2(\Omega)} \nonumber\\
&\leq& \f{\mu + \chi}{14} \|\nabla \p_t\mathbf g\|_{L^2(\Omega)}^2
    + \f{\nu}{10} \|\nabla \p_t\mathbf b\|_{L^2(\Omega)}^2
    + C \|\p_t\mathbf g\|_{L^2(\Omega)}^2 \|\nabla \mathbf b\|_{L^2(\Omega)}^4+ C \|\p_t w\|_{L^2(\Omega)}^2
         \nonumber\\
      && + C \|\p_t w\|_{L^2(\Omega)}^2 \|\nabla \mathbf b\|_{L^2(\Omega)}^4
         + C \|\p_t w\|_{L^2(\Omega)}^2
             \|\nabla \mathbf b\|_{L^2(\Omega)}^2+ C \|\p_t\mathbf b\|_{L^2(\Omega)}^2,\nonumber\\
J_2 + J_3
&\leq& \|\p_t\mathbf b\|^2_{L^4(\Omega)} \|\nabla\mathbf u\|_{L^2(\Omega)}
     + \|\mathbf b\|_{L^4(\Omega)}
       \|\nabla\p_t \mathbf u\|_{L^2(\Omega)}
       \|\p_t \mathbf b\|_{L^4(\Omega)}   \nonumber\\
&\leq& C \|\p_t\mathbf b\|_{L^2(\Omega)} \|\nabla\p_t\mathbf b\|_{L^2(\Omega)}
         \|\nabla \mathbf u\|_{L^2(\Omega)} \nonumber\\
      &&+ C \|\mathbf b\|_{L^2(\Omega)}^{\f12}
         \|\nabla\mathbf b\|_{L^2(\Omega)}^{\f12}
         \|\nabla\p_t\mathbf u\|_{L^2(\Omega)}
         \|\p_t\mathbf b\|_{L^2(\Omega)}^{\f12}
         \|\nabla\p_t\mathbf b\|_{L^2(\Omega)}^{\f12}
    \nonumber\\
&\leq& C \|\p_t\mathbf b\|_{L^2(\Omega)} \|\nabla\p_t\mathbf b\|_{L^2(\Omega)}
         \|\nabla \mathbf u\|_{L^2(\Omega)} \nonumber\\
      &&+C \|\mathbf b\|_{L^2(\Omega)}^{\f12}
         \|\nabla\mathbf b\|_{L^2(\Omega)}^{\f12}
         \|\nabla\p_t\mathbf g\|_{L^2(\Omega)}
         \|\p_t\mathbf b\|_{L^2(\Omega)}^{\f12}
         \|\nabla\p_t\mathbf b\|_{L^2(\Omega)}^{\f12} \nonumber\\
       &&+ C\|\mathbf b\|_{L^2(\Omega)}^{\f12}
         \|\nabla\mathbf b\|_{L^2(\Omega)}^{\f12}
         \|\p_t w\|_{L^2(\Omega)}
         \|\p_t\mathbf b\|_{L^2(\Omega)}^{\f12}
         \|\nabla\p_t\mathbf b\|_{L^2(\Omega)}^{\f12}   \nonumber\\
&\leq& \f{\mu + \chi}{14} \|\nabla\p_t\mathbf g\|_{L^2(\Omega)}^2
        + \f{\nu}{5} \|\nabla\p_t\mathbf b\|_{L^2(\Omega)}^2
        + C \|\p_t\mathbf b\|_{L^2(\Omega)}^2 \|\nabla \mathbf u\|_{L^2(\Omega)}^2
         \nonumber\\
      &&+ C \|\p_t \mathbf b\|_{L^2}^2 \|\mathbf b\|_{L^2}^2
            \|\nabla \mathbf b\|_{L^2}^2
        + C \|\p_t w\|_{L^2(\Omega)}^2.\nonumber
\end{eqnarray}
Plugging the estimates of $ J_i $, $ i = 1, 2, 3 $, into \eqref{bt21}, we have
\begin{eqnarray}\label{bH2}
&& \f12\f{d}{dt} \|\p_t\mathbf b\|_{L^2(\Omega)}^2
     + \f{7\nu}{10} \|\nabla \p_t\mathbf b\|_{L^2(\Omega)}^2  \nonumber\\
&\leq& \f{\mu + \chi}7 \|\nabla \p_t \mathbf g\|_{L^2(\Omega)}^2 +C \|\p_t\mathbf b\|_{L^2(\Omega)}^2 (1 + \|\nabla\mathbf u\|_{L^2(\Omega)}^2 + \|\mathbf b\|_{L^2(\Omega)}^2 \|\nabla\mathbf b\|_{L^2(\Omega)}^2) \nonumber\\
&& + C \|\p_t w\|_{L^2(\Omega)}^2 (1 + \|\nabla \mathbf b\|_{L^2(\Omega)}^4
   + \|\nabla \mathbf b\|_{L^2(\Omega)}^2 )
   + C \|\p_t\mathbf g\|_{L^2(\Omega)}^2\|\nabla \mathbf b\|_{L^2(\Omega)}^4.
\end{eqnarray}

Finally, adding up \eqref{gH2} and \eqref{bH2}, it follows that
\begin{eqnarray}\label{gbt2}
&&\f{d}{dt} (\|\p_t\mathbf g\|_{L^2(\Omega)}^2
             + \|\p_t\mathbf b\|_{L^2(\Omega)}^2)
  + (\mu + \chi)\|\nabla \p_t\mathbf g\|_{L^2(\Omega)}^2
  + \nu \|\nabla \p_t\mathbf b\|_{L^2(\Omega)}^2 \nonumber\\
&\leq& C (\|\p_t\mathbf g\|_{L^2(\Omega)}^2
             + \|\p_t\mathbf b\|_{L^2(\Omega)}^2)
         (1 + \|\nabla \mathbf u\|_{L^2(\Omega)}^4
             + \|\nabla \mathbf b\|_{L^2(\Omega)}^4 + \|w\|_{L^4(\Omega)}^{\f43} ) \nonumber\\
     && + C \|\p_t w\|_{L^2(\Omega)}^2 (1 + \|\nabla \mathbf u\|_{L^2(\Omega)}^4 + \|w\|_{L^4(\Omega)}^2  + \|\nabla \mathbf b\|_{L^2(\Omega)}^4 ).
\end{eqnarray}

$ \bf Step ~3 $.
Multiplying the inequality \eqref{gbt2} by $t$, we get
\begin{eqnarray*}\label{gbt21}
&&\f{d}{dt} (t\|\p_t\mathbf g\|_{L^2(\Omega)}^2
             + t\|\p_t\mathbf b\|_{L^2(\Omega)}^2)
  + (\mu + \chi)t \|\nabla \p_t\mathbf g\|_{L^2(\Omega)}^2
  + \nu t \|\nabla \p_t\mathbf b\|_{L^2(\Omega)}^2 \nonumber\\
&\leq& C t(\|\p_t\mathbf g\|_{L^2(\Omega)}^2
              + \|\p_t\mathbf b\|_{L^2(\Omega)}^2)
              (1 + \|\nabla \mathbf u\|_{L^2(\Omega)}^4
             + \|\nabla \mathbf b\|_{L^2(\Omega)}^4 + \|w\|_{L^4(\Omega)}^{\f43} )  \nonumber\\
       &&+ C t \|\p_t w\|_{L^2(\Omega)}^2 (1 + \|\nabla \mathbf u\|_{L^2(\Omega)}^4 + \|w\|_{L^4(\Omega)}^2  + \|\nabla \mathbf b\|_{L^2(\Omega)}^4)+ C (\|\p_t \mathbf g\|_{L^2(\Omega)}^2+ \|\p_t \mathbf b\|_{L^2(\Omega)}^2),\nonumber
\end{eqnarray*}
which implies, after applying Gr\"{o}nwall's inequality, Proposition \ref{uH1} and Lemma \ref{ptw}, that
\begin{eqnarray}\label{t2}
&&\sup\limits_{0 \leq t \leq T}t(\|\p_t\mathbf g\|_{L^2(\Omega)}^2
+ \|\p_t\mathbf b\|_{L^2(\Omega)}^2)
             + (\mu + \chi)\int_0^T t\|\nabla\p_t{\mathbf g}\|_{L^2(\Omega)}^2dt+\nu \int_0^T t\|\nabla\p_t{\mathbf b}\|_{L^2(\Omega)}^2 dt
\leq C.
\end{eqnarray}

Plugging \eqref{A105.1} into \eqref{stokes g}, we have
\begin{eqnarray}\label{nalba2gt}
&&(\mu + \chi)\|\nabla^2 \mathbf g\|_{L^2(\Omega)}^2\nonumber\\
&\leq&  C \|\p_t \mathbf g\|_{L^2(\Omega)}^2 + \f{\mu+\chi}{8} \|\nabla^2 \mathbf g\|_{L^2(\Omega)}^2+ \f{\mu + \chi}{4} \|\nabla^2 \mathbf b\|_{L^2(\Omega)}^2 + C (\|\mathbf u\|_{L^2(\Omega)}^2 + \|w\|_{L^2(\Omega)}^2)\nonumber\\
&&+ C (1 + \|\mathbf u\|_{L^2(\Omega)}^2 \|\nabla \mathbf u\|_{L^2(\Omega)}^2
  + \|\mathbf b\|_{L^2(\Omega)}^2 \|\nabla \mathbf b\|_{L^2(\Omega)}^2)
   (\|\nabla \mathbf g\|_{L^2(\Omega)}^2+ \|\nabla \mathbf b\|_{L^2(\Omega)}^2
   + \|w\|_{L^4(\Omega)}^2)
\end{eqnarray}
Similarly, by \eqref{stokes b1}--\eqref{A106.1}, there holds
\begin{eqnarray}\label{nalba2bt}
&&\nu\|\nabla^2 \mathbf b\|_{L^2(\Omega)}^2\nonumber\\
&\leq& C \|\p_t \mathbf b\|_{L^2(\Omega)}^2 + \f{\nu}8 \|\nabla^2 \mathbf g\|_{L^2(\Omega)}^2
+ \f{\nu}4 \|\nabla^2 \mathbf b\|_{L^2(\Omega)}^2 + C (1 + \|\mathbf u\|_{L^2(\Omega)}^2 \|\nabla \mathbf u\|_{L^2(\Omega)}^2+ \|\mathbf b\|_{L^2(\Omega)}^2 \|\nabla \mathbf b\|_{L^2(\Omega)}^2)\times\nonumber\\
  &&(\|\nabla \mathbf g\|_{L^2(\Omega)}^2 + \|\nabla \mathbf b\|_{L^2(\Omega)}^2
   + \|w\|_{L^4(\Omega)}^2).
\end{eqnarray}
Then by multiplying \eqref{nalba2gt} with $ \nu $, \eqref{nalba2bt} with $ \mu + \chi $, and adding the resultants up, one has
\begin{eqnarray}\label{nabla2gbt}
&&\f{3(\mu+\chi)\nu}4 \|\nabla^2 \mathbf g\|_{L^2(\Omega)}^2
     + \f{(\mu + \chi)\nu}2 \|\nabla^2 \mathbf b\|_{L^2(\Omega)}^2
 \nonumber\\
&\leq& C (\|\p_t \mathbf g\|_{L^2(\Omega)}^2 + \|\p_t \mathbf b\|_{L^2(\Omega)}^2)
+ C (1 + \|\mathbf u\|_{L^2(\Omega)}^2 \|\nabla \mathbf u\|_{L^2(\Omega)}^2
          + \|\mathbf b\|_{L^2(\Omega)}^2 \|\nabla \mathbf b\|_{L^2(\Omega)}^2) \times    \nonumber\\
  && (\|\nabla \mathbf g\|_{L^2(\Omega)}^2 + \|\nabla \mathbf b\|_{L^2(\Omega)}^2
   + \|w\|_{L^4(\Omega)}^2)
+ C (\|\mathbf u\|_{L^2(\Omega)}^2 + \|w\|_{L^2(\Omega)}^2),
\end{eqnarray}
which yields, after multiplying \eqref{nabla2gbt} by $ t $, that
\begin{eqnarray}
&& \f{3(\mu+\chi)\nu}4 \, t \, \|\nabla^2 \mathbf g\|_{L^2(\Omega)}^2
     + \f{(\mu + \chi)\nu}2 \, t \, \|\nabla^2 \mathbf b\|_{L^2(\Omega)}^2  \nonumber\\
&\leq& C t(\|\p_t \mathbf g\|_{L^2(\Omega)}^2 + \|\p_t \mathbf b\|_{L^2(\Omega)}^2)
+ C t(1 + \|\mathbf u\|_{L^2(\Omega)}^2 \|\nabla \mathbf u\|_{L^2(\Omega)}^2
          + \|\mathbf b\|_{L^2(\Omega)}^2 \|\nabla \mathbf b\|_{L^2(\Omega)}^2) \times    \nonumber\\
  && (\|\nabla \mathbf g\|_{L^2(\Omega)}^2 + \|\nabla \mathbf b\|_{L^2(\Omega)}^2
   + \|w\|_{L^4(\Omega)}^2)
+ C t(\|\mathbf u\|_{L^2(\Omega)}^2 + \|w\|_{L^2(\Omega)}^2).
\end{eqnarray}

Thus, by combing \eqref{t2}, Proposition \ref{uH1} and Lemma \ref{gH1}, we can obtain
\begin{eqnarray}\label{tnabla2gb}
\sup\limits_{0 \leq t \leq T} \, t(\|\nabla^2{\mathbf g}\|_{L^2(\Omega)}^2
  + \|\nabla^2{\mathbf b}\|_{L^2(\Omega)}^2)
\leq C,
\end{eqnarray}
which further implies, after adding \eqref{t2}, that
\begin{eqnarray}\label{tt}
&&\sup\limits_{0 \leq t \leq T} \, t(\|\nabla^2{\mathbf g}\|_{L^2(\Omega)}^2
  + \|\nabla^2{\mathbf b}\|_{L^2(\Omega)}^2 + \|\p_t{\mathbf g}\|_{L^2(\Omega)}^2 + \|\p_t{\mathbf b}\|_{L^2(\Omega)}^2)  \nonumber\\
&&+ \int_0^T t (\|\nabla\p_t{\mathbf g}\|_{L^2(\Omega)}^2
  + \|\nabla\p_t{\mathbf b}\|_{L^2(\Omega)}^2) \, dt
\leq C.
\end{eqnarray}
All that's left is to deduce the corresponding estimates of $\mathbf u$. By noticing $ \mathbf g = \mathbf u - \mathbf v $, applying the inequality \eqref{keyestimates}, H\"{o}lder's inequality and Lemmas \ref{nablaw-Lp}--\ref{ptw}, we have
\begin{eqnarray*}
&&t\, \|\nabla^2 \mathbf u\|_{L^2(\Omega)}^2
\leq t\, (\|\nabla^2 \mathbf g\|_{L^2(\Omega)}^2
     + \|\nabla^2 \mathbf v\|_{L^2(\Omega)}^2) \\
&\leq& C t\, (\|\nabla^2 \mathbf g\|_{L^2(\Omega)}^2 + \|\nabla w\|_{L^2(\Omega)}^2)\leq C t\, (\|\nabla^2 \mathbf g\|_{L^2(\Omega)}^2 + 1),
\end{eqnarray*}
and
\begin{eqnarray*}
&&t\, \|\nabla\p_t\mathbf u\|_{L^2(\Omega)}^2\leq t\, (\|\nabla\p_t\mathbf g\|_{L^2(\Omega)}^2
   + \|\nabla\p_t\mathbf v\|_{L^2(\Omega)}^2)  \\
&\leq& Ct\, (\|\nabla\p_t\mathbf g\|_{L^2(\Omega)}^2 + \| \p_t w\|_{L^2(\Omega)}^2)\leq Ct\, (\|\nabla\p_t\mathbf g\|_{L^2(\Omega)}^2 + 1).
\end{eqnarray*}
Recalling \eqref{tt}, we finally obtain
\begin{eqnarray*}
\sup\limits_{0 \leq t \leq T}\, t \, \|\nabla^2 \mathbf u\|_{L^2(\Omega)}^2+\int_0^T t \, \|\nabla\p_t\mathbf u\|_{L^2(\Omega)}^2
\leq C,
\end{eqnarray*}
which completes the proof of Proposition \ref{ubtH2}.
\end{proof}

With the global bounds obtained in Proposition \ref{ubtH2}, we can conclude that weak solutions obtained in Section \ref{weak solutions} are actually strong solutions, which clearly implies Theorem \ref{T3}.
~\\
\subsection{Uniqueness of strong solutions}
~\\
\label{section4.2}

For the proof of Theorem \ref{T1}, it remains to certify the uniqueness of strong solutions.

\begin{proof}[\bf Uniqueness:]
Suppose that $ ({\mathbf u}, w, {\mathbf b}) $ and $ (\widetilde{\mathbf u}, \widetilde{w}, \widetilde{\mathbf b}) $  are two strong solutions of system \eqref{eq1}--\eqref{eq20} with the regularity specified in Proposition \ref{uH1} and Proposition \ref{ubtH2}.
Setting ${\mathbf U} = {\mathbf u} - \widetilde{\mathbf u},\, W = w - \widetilde{w}, \,{\mathbf B} = {\mathbf b} - \widetilde{\mathbf b}, \,\Pi = p - \widetilde{p}$, then $ (\mathbf U, W, \mathbf B, \Pi) $ satisfies
\begin{equation}\label{uni}
\left\{\begin{array}{ll}
{\mathbf U}_t + {\mathbf u}\cdot\nabla {\mathbf U} + {\mathbf U}\cdot\nabla \widetilde{\mathbf u} + \nabla \Pi
= (\mu+\chi )\Delta{\mathbf U} + \mathbf b \cdot \nabla\mathbf B + \mathbf B \cdot \nabla \widetilde{\mathbf b}- \chi{\nabla^{\perp}}W,\vspace{1ex}\\
W_t + {\mathbf u} \cdot \nabla W + {\mathbf U} \cdot \nabla \widetilde{w} + 2 \chi W = \chi {\nabla^{\perp}}\cdot {\mathbf U},\vspace{1ex}\\
{\mathbf B}_t + {\mathbf u}\cdot\nabla {\mathbf B} + {\mathbf U}\cdot\nabla \widetilde{\mathbf b}
= \nu \Delta{\mathbf B} + \mathbf b \cdot \nabla\mathbf U + \mathbf B \cdot \nabla \widetilde{\mathbf u},\vspace{1ex}\\
\nabla\cdot {\mathbf U}=0=\nabla\cdot {\mathbf B}=0,\quad {\mathbf U}|_{\p\Omega}={\mathbf B}|_{\p\Omega}=0,  \vspace{1ex}\\
({\mathbf U},W, \mathbf B)(x,0)=0.
\end{array}\right.
\end{equation}

Multiplying the first three equations of \eqref{uni} with $ \mathbf U, W, \mathbf B $ respectively yields
\begin{eqnarray}\label{UWB}
&&\f12\f{d}{dt}(\|\mathbf U\|_{L^2(\Omega)}^2 + \|W\|_{L^2(\Omega)}^2
    + \|\mathbf B\|_{L^2(\Omega)}^2)
     + (\mu + \chi)\|\nabla \mathbf U\|_{L^2(\Omega)}^2
     + \nu\|\nabla \mathbf B\|_{L^2(\Omega)}^2
      + 2\chi \|W\|_{L^2(\Omega)}^2   \nonumber\\
&=&  - \chi \int_\Omega \nabla^\bot W \cdot \mathbf U \,dx
     + \chi \int_\Omega \nabla^\bot \cdot \mathbf U W \,dx
     - \int_\Omega \mathbf U \cdot \nabla \widetilde{u} \cdot \mathbf U \,dx
     + \int_\Omega \mathbf B \cdot \nabla \widetilde{b} \cdot \mathbf U \,dx
 \nonumber\\
    && - \int_\Omega \mathbf U \cdot \nabla \widetilde{w} \cdot W \,dx
     - \int_\Omega \mathbf U \cdot \nabla\widetilde{\mathbf b} \cdot\mathbf B\,dx
     + \int_\Omega \mathbf B \cdot \nabla\widetilde{\mathbf u} \cdot\mathbf B\,dx
     = \sum_{i=1}^7 I_i.
\end{eqnarray}

For the seven terms, by using the boundary conditions $ \mathbf U|_{\p\Omega} = 0 $, H\"{o}lder's inequality, Corollary \ref{C1}, Young's inequality, one gets
\begin{eqnarray*}
I_1 + I_2
&=& 2\chi \int_\Omega \nabla^\bot \cdot \mathbf U W \,dx \leq \f{\mu + \chi}4 \|\nabla \mathbf U \|_{L^2(\Omega)}^2
     + C \|W\|_{L^2(\Omega)}^2, \\
\sum_{i=3}^7I_i
&\leq& \|\nabla \widetilde{\mathbf u}\|_{L^2(\Omega)}
       \|\mathbf U\|_{L^4}^2
  + \|\nabla \widetilde{\mathbf b}\|_{L^2(\Omega)} \|\mathbf B\|_{L^4}
    \|\mathbf U\|_{L^4(\Omega)}
  + \|\nabla \widetilde{w}\|_{L^4} \|\mathbf U\|_{L^4(\Omega)}\|W\|_{L^2}\\
 && + \|\nabla\widetilde{\mathbf b}\|_{L^2(\Omega)} \|\mathbf U\|_{L^4(\Omega)} \|\mathbf B\|_{L^4(\Omega)} + \|\nabla \widetilde{\mathbf u}\|_{L^2(\Omega)}
  \|\mathbf B\|_{L^4}^2 \\
&\leq& C \|\nabla \widetilde{\mathbf u}\|_{L^2(\Omega)}
         \|\mathbf U\|_{L^2(\Omega)} \|\nabla \mathbf U\|_{L^2(\Omega)}
    + C \|\nabla \widetilde{w}\|_{L^4} \|\mathbf U\|_{L^2(\Omega)}^{\f12}
       \|\nabla \mathbf U\|_{L^2(\Omega)}^{\f12} \|W\|_{L^2}  \\
   &&  + C \|\nabla\widetilde{\mathbf b}\|_{L^2(\Omega)}
       \|\mathbf B\|_{L^2(\Omega)}^{\f12}\|\nabla \mathbf B\|_{L^2(\Omega)}^{\f12} \|\mathbf U\|_{L^2(\Omega)}^{\f12}\|\nabla \mathbf U\|_{L^2(\Omega)}^{\f12}
    + C \|\nabla \widetilde{\mathbf u}\|_{L^2(\Omega)}
         \|\mathbf B\|_{L^2(\Omega)} \|\nabla \mathbf B\|_{L^2(\Omega)}\\
&\leq& \f{\mu + \chi}{4}\|\nabla \mathbf U\|_{L^2(\Omega)}^2
        + \f{\nu}2\|\nabla \mathbf B\|_{L^2(\Omega)}^2
        + C \|\nabla\widetilde{\mathbf u}\|_{L^2(\Omega)}^2 \|\mathbf U\|_{L^2(\Omega)}^2 + C \|\nabla\widetilde{w}\|_{L^4(\Omega)}^2\|W\|_{L^2(\Omega)}^2 \\
   && + C \|\nabla \widetilde{\mathbf b}\|_{L^2(\Omega)}^2
           (\|\mathbf B\|_{L^2(\Omega)}^2+ \|\mathbf U\|_{L^2(\Omega)}^2)
      + C\|\mathbf U\|_{L^2(\Omega)}^2 + C \|\nabla\widetilde{\mathbf u}\|_{L^2(\Omega)}^2 \|\mathbf B\|_{L^2(\Omega)}^2.
\end{eqnarray*}
Plugging the above estimates into \eqref{UWB}, we obtain
\begin{eqnarray*}
&&\f{d}{dt}(\|\mathbf U\|_{L^2(\Omega)}^2 + \|W\|_{L^2(\Omega)}^2
    + \|\mathbf B\|_{L^2(\Omega)}^2)
     + (\mu + \chi)\|\nabla \mathbf U\|_{L^2(\Omega)}^2
     + \nu\|\nabla \mathbf B\|_{L^2(\Omega)}^2 + 2\chi \|W\|_{L^2(\Omega)}^2\\
&\leq& C (1+ \|\nabla \widetilde{\mathbf u}\|_{L^2(\Omega)}^2
    + \|\nabla \widetilde{\mathbf b}\|_{L^2(\Omega)}^2
    + \|\nabla \widetilde{w}\|_{L^4(\Omega)}^2)
    (\|\mathbf U\|_{L^2(\Omega)}^2 + \|W\|_{L^2(\Omega)}^2 + \|\mathbf B\|_{L^2(\Omega)}^2),
\end{eqnarray*}
which implies, after applying Gr\"{o}nwall's inequality and Proposition \ref{uH1}, that
\begin{eqnarray*}
  &&\|\mathbf U\|_{L^2(\Omega)}^2 + \|W\|_{L^2(\Omega)}^2
    + \|\mathbf B\|_{L^2(\Omega)}^2 \\
&\leq& e^{C\int_0^t (1 + \|\nabla \widetilde{\mathbf u}\|_{L^{2}(\Omega)}^2 + \|\nabla \widetilde{\mathbf b}\|_{L^{2}(\Omega)}^2 + \|\nabla\widetilde{w}\|_{L^{4}(\Omega)}^2) \,d\tau}(\|{\mathbf U}_0\|_{L^2(\Omega)}^2 + \|W_0\|_{L^2(\Omega)}^2
    + \|{\mathbf B}_0\|_{L^2(\Omega)}^2)  \\
&\leq& e^{CT} (\|{\mathbf U}_0\|_{L^2(\Omega)}^2 + \|W_0\|_{L^2(\Omega)}^2
    + \|{\mathbf B}_0\|_{L^2(\Omega)}^2).
\end{eqnarray*}
Thus we obtain $ {\mathbf U} = W = \mathbf B \equiv 0 $ according to $ {\mathbf U}_0 = W_0 = {\mathbf B}_0 = 0 $. This finishes the proof of Theorem \ref{T1}.

\end{proof}

\section*{Acknowledgments}
S. S. Wang is supported by Beijing University of Technology (No. ykj-2018-00110).
J. T. Liu is supported by National Natural Science Foundation of China (No.  11801018), Beijing Natural Science Foundation (No. 1192001), Youth Backbone Individual Program of the Organization Department of Beijing Municipality (No.  2017000020124G052) and Beijing University of Technology (No. 006000514120513).

\vskip .3in

\end{document}